\documentclass[sn-mathphys-num]{sn-jnl}% Math and Physical Sciences Numbered Reference Style 
%%\documentclass[sn-mathphys-ay]{sn-jnl}% Math and Physical Sciences Author Year Reference Style
%%\documentclass[sn-aps]{sn-jnl}% American Physical Society (APS) Reference Style
%%\documentclass[sn-vancouver,Numbered]{sn-jnl}% Vancouver Reference Style
%%\documentclass[sn-apa]{sn-jnl}% APA Reference Style 
%%\documentclass[sn-chicago]{sn-jnl}% Chicago-based Humanities Reference Style

%%%% Standard Packages
%%<additional latex packages if required can be included here>
%\usepackage[english]{babel}

%\usepackage[utf8]{inputenc}
\usepackage{graphicx}%
\usepackage{multirow}%
\usepackage{amsmath,amssymb,amsfonts}%
\usepackage{amsthm}%
\usepackage{mathrsfs}%
\usepackage[title]{appendix}%
\usepackage{xcolor}%
\usepackage{textcomp}%
\usepackage{manyfoot}%
\usepackage{booktabs}%
\usepackage{algorithm}%
\usepackage{algorithmicx}%
\usepackage{algpseudocode}%
\usepackage{listings}%
\usepackage{tikz}
\usetikzlibrary{matrix,arrows,decorations.pathmorphing}
\usepackage{tikz-cd}

\usepackage{lmodern}
\usepackage[T1]{fontenc}
\usepackage{fancyhdr}
\usepackage{amssymb}
\usepackage{latexsym}
\usepackage{mathtools}
\usepackage{hyperref}
\usepackage{pdfpages}
\usepackage{verbatim}

\usepackage{amsthm}
\theoremstyle{plain}

\newtheorem{thm}{Theorem}

\newtheorem{prop}[thm]{Proposition}

\theoremstyle{definition}
\newtheorem{rem}[thm]{Remark}

\def\gcd{\mathrm{GCD}}
\def\lcm{\mathrm{lcm}}

\def\deg{\mathrm{deg}}
\def\aut{\mathrm{Aut}}
\def\div{\mathrm{div}}
\def\ord{\mathrm{ord}}

\def\cX{\mathcal{X}}
\def\cE{\mathcal{E}}
\def\K{\mathbb{K}}
\def\gg{g}
%%%%

%%%%%=============================================================================%%%%
%%%%  Remarks: This template is provided to aid authors with the preparation
%%%%  of original research articles intended for submission to journals published 
%%%%  by Springer Nature. The guidance has been prepared in partnership with 
%%%%  production teams to conform to Springer Nature technical requirements. 
%%%%  Editorial and presentation requirements differ among journal portfolios and 
%%%%  research disciplines. You may find sections in this template are irrelevant 
%%%%  to your work and are empowered to omit any such section if allowed by the 
%%%%  journal you intend to submit to. The submission guidelines and policies 
%%%%  of the journal take precedence. A detailed User Manual is available in the 
%%%%  template package for technical guidance.
%%%%%=============================================================================%%%%

%% as per the requirement new theorem styles can be included as shown below
\theoremstyle{thmstyleone}%
%  meant for continuous numbers
%%\newtheorem{theorem}{Theorem}[section]% meant for sectionwise numbers
%% optional argument [theorem] produces theorem numbering sequence instead of independent numbers for Proposition
% 
%%\newtheorem{proposition}{Proposition}% to get separate numbers for theorem and proposition etc.

\theoremstyle{thmstyletwo}%

\theoremstyle{thmstylethree}%

\raggedbottom
%%\unnumbered% uncomment this for unnumbered level heads

\begin{document}

\title[Algebraic curves with a large cyclic automorphism group]{Algebraic curves with a large cyclic automorphism group}

%%=============================================================%%
%% GivenName	-> \fnm{Joergen W.}
%% Particle	-> \spfx{van der} -> surname prefix
%% FamilyName	-> \sur{Ploeg}
%% Suffix	-> \sfx{IV}
%% \author*[1,2]{\fnm{Joergen W.} \spfx{van der} \sur{Ploeg} 
%%  \sfx{IV}}\email{iauthor@gmail.com}
%%=============================================================%%

\author[1]{\fnm{Arianna} \sur{Dionigi}}\email{arianna.dionigi@unifi.it}

\author[2]{\fnm{Massimo} \sur{Giulietti}}\email{massimo.giulietti@unipg.it}
%\equalcont{These authors contributed equally to this work.}

\author[2]{\fnm{Marco} \sur{Timpanella}}\email{marco.timpanella@unipg.it}
%\equalcont{These authors contributed equally to this work.}

\affil[1]{\orgdiv{Department of Mathematics Ulisse Dini}, \orgname{University of Florence}, \orgaddress{\street{Via Giovanni Battista Morgagni, 67/a}, \city{Florence}, \postcode{50134}, \country{Italy}}}

\affil[2]{\orgdiv{Department of Mathematics and Computer Science}, \orgname{University of Perugia}, \orgaddress{\street{Via Luigi Vanvitelli, 1}, \city{Perugia}, \postcode{06123}, \country{Italy}}}

%\affil[3]{\orgdiv{Department of Mathematics and Computer Science}, \orgname{Organization}, \orgaddress{\street{Via Luigi Vanvitelli, 1}, \city{Perugia}, \postcode{06123},  \country{Italy}}}

%%==================================%%
%% Sample for unstructured abstract %%
%%==================================%%

\abstract{The study of algebraic curves $\cX$ with numerous automorphisms in relation to their genus $g(\cX)$ is a well-established area in Algebraic Geometry. In 1995, Irokawa and Sasaki \cite{Sasaki} gave a complete classification of curves over $\mathbb{C}$ with an automorphism of order $N \geq 2g(\mathcal{X}) + 1$. 
Precisely, such curves are either hyperelliptic with $N=2g(\cX)+2$ with $g(\cX)$ even, or  are  quotients of the Fermat curve of degree $N$ by a cyclic group of order $N$.
Such a  classification does not hold in positive characteristic $p$, the curve with equation $y^2=x^p-x$ being a well-studied counterexample. 
This paper successfully classifies curves with a cyclic automorphism group of order $N$ at least $2g(\mathcal{X}) + 1$ in positive characteristic $p \neq 2$, offering the positive characteristic counterpart to the Irokawa-Sasaki result.
The possibility of wild ramification in positive characteristic has presented a few challenges to the investigation.}

\keywords{Algebraic curves, positive characteristic, automorphism groups.}

%%\pacs[JEL Classification]{D8, H51}

\pacs[MSC Classification]{14H37, 14H05}

\maketitle

\section{Introduction}\label{sec1}

In this paper, let $\mathcal{X}$ denote a projective, geometrically irreducible, nonsingular algebraic curve defined over an algebraically closed field $\mathbb{K}$ with characteristic $p$. We denote the automorphism group of $\mathcal{X}$ that fixes $\mathbb{K}$ elementwise by $\aut(\mathcal{X})$.

By a classical result, $\aut(\mathcal{X})$ is finite if the genus $g(\mathcal{X})$ of $\mathcal{X}$ is at least two; see, for example, \cite[Chapter 11]{HKT}. It is also well known that when the characteristic $p$ divides $|\aut(\mathcal{X})|$, the classical Hurwitz bound
$$|\aut(\mathcal{X})| \leq 84(g(\mathcal{X})-1)$$
does not generally apply. Specifically, if $g(\mathcal{X}) \geq 2$, then $|\aut(\mathcal{X})| < 8g(\mathcal{X})^3$ except for four infinite families of curves; see \cite{henn} or \cite[Theorem 11.127]{HKT}. Although algebraic curves defined over fields of positive characteristic can exhibit many more automorphisms than predicted by the Hurwitz bound, stronger bounds for subgroups $G$ of $\aut(\mathcal{X})$ under certain conditions have been established. For instance,
\begin{itemize}
\item for solvable groups $G$ and odd $p$, $|G| \leq 34(g(\mathcal{X})+1)^{3/2}$; see \cite{kmordinarie};
\item for solvable groups $G$, even $g(\mathcal{X})$, and $p = 2$, $|G| \leq 35(g(\mathcal{X})+1)^{3/2}$; see \cite{msordinarie};
\item for abelian groups, $|G| \leq 4g(\mathcal{X}) + 4$; see Theorem \ref{theorem11.79};
\item for subgroups of prime power order $d$ with $d = p$, $|G| \leq 4(g(\mathcal{X})-1)$; see \cite{NAKA};
\item for subgroups of prime power order $d$, $p$ odd, and $d \neq p$, $|G| \leq 9(g(\mathcal{X})-1)$; see \cite{kmoddpower};
\item for groups $G$ fixing a point, $|G| \leq \frac{4pg(\mathcal{X})^2}{p-1}\left( \frac{2g(\mathcal{X})}{p-1}+1\right)$; see \cite[Theorem 2.1]{Singh}. See also \cite{AdvancesGK, LiaTimpanella} for other bounds on automorphism groups fixing a point.
\end{itemize}
In the complex case, if $G$ is a cyclic group, then $|G| \leq 4g(\mathcal{X}) + 2$. A complete characterization of algebraic curves over $\mathbb{C}$ with an automorphism of order at least $2g(\mathcal{X}) + 1$ was given by Irokawa and Sasaki \cite{Sasaki}. They proved that if $\mathcal{X}$ is a curve of genus at least 2 and its automorphism group contains a cyclic subgroup $G$ of order $N \geq 2g(\mathcal{X}) + 1$, then $\mathcal{X}$ is either isomorphic to the curve
\begin{equation*}
y^N = x^r(1-x)^s,
\end{equation*}
for some $r, s \geq 1$, with $r + s \leq N - 1$ and $\gcd(r, s, N) = 1$, or to the curve
\begin{equation*}
y^2 = (x^{g(\mathcal{X})+1} - 1)(x^{g(\mathcal{X})+1} - \lambda),
\end{equation*}
where $\lambda \in \mathbb{K} \setminus \{0, 1\}$. In the first case, $\mathcal{X}$ is a quotient of the Fermat curve of degree $N$, whereas in the second case, it is a hyperelliptic curve.

As a matter of terminology, we say that $\mathcal X$ is an $N$-curve if it admits an automorphism group of size $N$.
In arbitrary characteristic, so far only the prime order case has been thoroughly investigated:
Homma \cite{Homma} proved that if $N=q$ is a prime, then either $q\le g(\cX)+1$ or $q=2g(\cX)+1$ and provided a classification for the latter case; more recently, Arakelian and Speziali \cite{Arakelian} classified $(g(\cX)+1)$-curves, and fully characterized the automorphism groups of $q$-curves
for $q=2g(\cX)+1$ and for $q=g(\cX)+1$.

In this paper, we tackle the problem of classifying $N$-curves with $N \geq 2g(\cX) + 1$ in positive characteristic. The challenge in this context, compared to the complex case, arises from the potential for wild ramification, particularly due to the presence of $p$-elements in $G$ that fix a point on $\mathcal{X}$.
We achieve success for $p \neq 2$, and our main result is as follows.
\begin{thm}\label{MAIN}
Let $p \geq 3$, and let $\mathcal{X}$ be a curve of genus $g(\mathcal{X}) \geq 2$ with a cyclic automorphism group $G$ of order $N \geq 2g(\mathcal{X}) + 1$. Then, up to birational equivalence, one of the following holds:
\begin{itemize}
\item[(I)] $p \nmid N$, and either 
\begin{equation*}
\mathcal{X}: y^N = x^r(1-x)^s,
\end{equation*}
for some natural numbers $r$ and $s$ such that $r + s \leq N - 1$ and $\gcd(r, s, N) = 1$, or $\mathcal{X}$ is a hyperelliptic curve
\begin{equation*}
\mathcal{X}: y^2 = \big(x^{\frac{N}{2}} - 1\big)\big(x^{\frac{N}{2}} - \lambda\big),
\end{equation*}
where $\lambda \in \mathbb{K} \setminus \{0, 1\}$ and $N\equiv 2 \pmod 4$;
%, and $N = 2g(\mathcal{X}) + 2$ with $g(\mathcal{X})$ even.
\item[(II)] $N = pm$ with $m > 1$, $(p,m)=1$, and $p \neq 3$; also, either $g(\mathcal{X}) = \frac{(p-1)(m-1)}{2}$ and
\begin{equation*}
\mathcal{X}: y^p - y = a(x^m - b)
\end{equation*}
for some $a, b \in \mathbb{K}$, or $g(\mathcal{X}) = p - 1$ and
\begin{equation*}
\mathcal{X}: by^p + cy = ax + \frac{1}{x}
\end{equation*}
for some $a, b, c \in \mathbb{K}$.
\item[(III)] $N = p$, and
%then $\mathcal{X}$ is given by
\begin{equation*}
\mathcal{X}: y^p - y = x^2.
\end{equation*}
\end{itemize}
\end{thm}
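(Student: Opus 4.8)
The plan is to analyze the degree-$N$ quotient map $\pi\colon\mathcal{X}\to\bar{\mathcal{X}}:=\mathcal{X}/G$ through Riemann--Hurwitz, exploiting throughout that the hypotheses give $2\le 2g(\mathcal{X})-2\le N-3$. Writing $\bar g$ for the genus of $\bar{\mathcal{X}}$ and $\Delta$ for the degree of the different of $\pi$, one has $2g(\mathcal{X})-2=N(2\bar g-2)+\Delta$. A tamely ramified branch point of $\pi$, of index $e\ge2$, contributes $N(1-1/e)\ge N/2$ to $\Delta$, while a wildly ramified one contributes strictly more than $N$: here $p\ge3$ enters, since a nontrivial $p$-element in a point stabilizer inflates the local different past the ramification index. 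Hence $\bar g\ge2$ is impossible (it would force $\Delta<0$), and $\bar g=1$ is excluded as well: $\Delta<N$ then permits at most one branch point and no wild one, yet a single tame branch point on a cyclic cover of an elliptic curve cannot occur (the local monodromy around it is a commutator, hence trivial in the abelian group $G$), while no branching at all gives $g(\mathcal{X})=1$. Thus $\bar{\mathcal{X}}\cong\mathbb{P}^1$.

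Next I would dispose of the tame case $p\nmid N$. Now $\pi$ is a Kummer cover $y^N=\prod_j(x-a_j)^{n_j}$, the branch points being those of non-$N$-divisible valuation, with index $N/\gcd(N,n_j)$, subject to $\sum_j n_j\equiv0\pmod N$ and to the connectedness condition $\lcm_j\big(N/\gcd(N,n_j)\big)=N$. Riemann--Hurwitz reads $2g(\mathcal{X})-2=N(t-2)-\sum_j\gcd(N,n_j)$, where $t$ is the number of branch points; since $\gcd(N,n_j)\le\min(n_j,N-n_j)$, the bounds $g(\mathcal{X})\ge2$ and $N\ge 2g(\mathcal{X})+1$ confine $\sum_j\gcd(N,n_j)$ to a short interval, forcing $t\in\{3,4\}$ (for $t\le2$ one gets $g(\mathcal{X})\le0$; $t\ge5$ violates the same interval). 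For $t=3$, normalizing the branch points to $0,1,\infty$ yields $\mathcal{X}\colon y^N=x^r(1-x)^s$. For $t=4$ the key remark is that in a cyclic group all elements of order $2$ coincide: if two of the four indices equal $2$, the other two equal a common $m$, and $\lcm(2,m)=N$ together with generation forces $m=N/2$ to be odd, i.e. $N\equiv2\pmod4$, giving the hyperelliptic curve $y^2=(x^{N/2}-1)(x^{N/2}-\lambda)$; every remaining $t=4$ configuration is killed by the observation that each prime dividing $N$ must divide at least two of the ramification indices (otherwise the corresponding monodromy generators would all lie in a proper subgroup), which is incompatible with the allowed interval for $\sum_j\gcd(N,n_j)$. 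This settles Case (I).

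The substantial part is the wild case $p\mid N$. Write $N=p^am$ with $p\nmid m$, $a\ge1$, and $G=P\times C$ with $P\cong\mathbb{Z}/p^a\mathbb{Z}$ the (cyclic) Sylow $p$-subgroup and $C\cong\mathbb{Z}/m\mathbb{Z}$. Since the inertia subgroups of $\pi$ generate $G$ and the subgroups of a cyclic $p$-group form a chain, some stabilizer contains $P$; in particular $\pi$ has at least one wildly ramified branch point, whose contribution to $\Delta<3N$ exceeds $N$. Sharpening this estimate with the admissible ramification-break sequences for cyclic $p$-group extensions (Hasse--Arf), one shows first that $a=1$, and then that the number $w$ of wild branch points satisfies $w\le2$, with the number $s$ of tame ones bounded by $s+2w<6$. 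One then studies the intermediate quotients $\mathcal{X}/C$ (an Artin--Schreier $\mathbb{Z}/p\mathbb{Z}$-cover of $\mathbb{P}^1$ ramified over the $w$ wild points) and $\mathcal{Y}:=\mathcal{X}/P$ (a tame $\mathbb{Z}/m\mathbb{Z}$-cover of $\mathbb{P}^1$ carrying the induced $C$-action): combining Riemann--Hurwitz on the tower $\mathcal{X}\to\mathcal{Y}\to\mathbb{P}^1$ with the structure of the function field of $\mathcal{Y}$ and with $N\ge 2g(\mathcal{X})+1$ forces $\mathcal{Y}\cong\mathbb{P}^1$, so that $\mathcal{X}$ is the compositum of $\mathcal{Y}$ and $\mathcal{X}/C$ over $\mathbb{P}^1$. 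Thus $\mathcal{X}\to\mathcal{Y}\cong\mathbb{P}^1$ is an Artin--Schreier cover $y^p-y=f(x)$ ramified over $w$ points; after normalizing the induced order-$m$ automorphism of $\mathbb{P}^1$ as $x\mapsto\zeta x$ and placing the pole(s) of $f$ at $\infty$ (resp. at $0$ and $\infty$), the requirement that this automorphism lift to $\mathcal{X}$ --- i.e. $f(\zeta x)\equiv f(x)$ modulo $\{h^p-h:h\in\mathbb{K}(x)\}$ --- together with the genus bound forces $f$ to be, up to coordinate change and this equivalence, $x^m$ (so $w=1$, $g(\mathcal{X})=\tfrac{(p-1)(m-1)}2$, $\mathcal{X}\colon y^p-y=a(x^m-b)$) or $ax+1/x$ (so $w=2$, $g(\mathcal{X})=p-1$, $\mathcal{X}\colon by^p+cy=ax+1/x$); finally $m=1$, i.e. $N=p$ prime, cannot satisfy $p\le g(\mathcal{X})+1$ under $N\ge 2g(\mathcal{X})+1$, so by Homma's classification of $p$-curves \cite{Homma} one has $p=2g(\mathcal{X})+1$ and $\mathcal{X}\colon y^p-y=x^2$.

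The main obstacle is precisely the wild case, and within it the higher-ramification bookkeeping. Ruling out $a\ge2$ and $w\ge3$, and computing the genera exactly rather than up to inequalities, requires the precise admissible break sequences for cyclic $p$-group extensions --- the crude estimate $d_Q\ge e_Q+p-2$ for the local different does not suffice. Equally delicate is controlling how the tame $C$-action descends to $\mathbb{P}^1$: one must show $\mathcal{Y}$ is rational, locate the wild branch point(s) at $0$ and/or $\infty$, and then use the compatibility $f(\zeta x)\equiv f(x)\pmod{\{h^p-h\}}$ --- combined with the pole structure dictated by $w$ --- to collapse the a priori large family of admissible Artin--Schreier data $f$ down to a single monomial or to $x+1/x$, thereby producing the normal forms of Case (II).
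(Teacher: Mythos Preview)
Your overall architecture matches the paper's: reduce to $\bar{\mathcal{X}}\cong\mathbb{P}^1$, dispatch $p\nmid N$ via Kummer theory and a branch-point count, and for $p\mid N$ study the tower through $\mathcal{X}/P$ and $\mathcal{X}/C$ to land in an Artin--Schreier normal form. The tame case is essentially right; your monodromy argument for excluding $\bar g=1$ (via Grothendieck's presentation of $\pi_1^{t}(E\setminus\{Q\})$ as a quotient of $\widehat{F_2}$, so the inertia generator is a commutator) is in fact cleaner than the paper's direct case analysis, and your ``each prime dividing $N$ hits at least two $e_i$'' is exactly the paper's Proposition~\ref{mcm}(iii).

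The genuine gap is your claim that break-sequence estimates (Hasse--Arf) force $a=1$. This does work when $m>1$ --- the paper carries it out in the proof of Proposition~\ref{12mar} --- but it \emph{fails} for the pure $p$-power case $N=p^a$. Concretely, for $N=p^2$ one finds that $G$ fixes a single point $P_0$, acts freely elsewhere, and the unique filtration compatible with all the Hasse--Arf constraints and with $N\ge 2g(\mathcal{X})+1$ is
\[
|G_{P_0}^{(0)}|=|G_{P_0}^{(1)}|=p^2,\qquad |G_{P_0}^{(2)}|=\cdots=|G_{P_0}^{(p+1)}|=p,\qquad |G_{P_0}^{(p+2)}|=1,
\]
yielding $g(\mathcal{X})=p(p-1)/2$ and hence $N=2g(\mathcal{X})+p\ge 2g(\mathcal{X})+1$. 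So nothing numerical excludes $a=2$; the ramification data are perfectly consistent. The paper closes this hole by an argument of a different nature that your outline does not anticipate: since $\mathcal{X}/G_{P_0}^{(2)}$ is rational, $\mathcal{X}$ has an Artin--Schreier model $y^p-y=B(x)$ with $\deg B=p+1$, hence is a \emph{smooth plane curve} of degree $p+1$, so $G\hookrightarrow\mathrm{PGL}(3,\mathbb{K})$; but for $p>2$ every unipotent element of $\mathrm{PGL}(3,\mathbb{K})$ has order $p$, so no cyclic subgroup of order $p^2$ exists. This geometric step is what actually pins down $a=1$. Beyond this, be aware that forcing $\mathcal{Y}=\mathcal{X}/P$ to be rational (rather than elliptic) in the paper goes through a case-by-case elimination of the possible short-orbit patterns of $C$ on an elliptic $\mathcal{Y}$, and that bounding the number of wild branch points uses the $p$-rank of $\mathcal{X}/C$ (via Nakajima) rather than $\Delta$-counting; these are not obstacles to your plan, but they are where the work really lies.
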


The paper is structured as follows. 
First, Section \ref{background} provides the necessary background on algebraic curves and proofs of some preliminary results. 
Then we present the proof of our main result. Due to the possible occurrence of wild ramification, we will divide the proof into two sections: Section \ref{noncoprimo} and Section \ref{coprimo}, according to whether $|G|$ is divisible by $p$ or not. 

\section{Background and Preliminary Results}\label{background}

In this section, we review the known results on automorphism groups that are pertinent to the rest of the paper. Our notation and terminology are standard; see \cite[Chapter 11]{HKT} for details.
Let $\mathcal{X}$ denote a projective, geometrically irreducible, nonsingular algebraic curve defined over an algebraically closed field $\mathbb{K}$ with characteristic $p$, and let $\aut(\mathcal{X})$ be its automorphism group.
Let $\mathbb K(\mathcal X)$ be the function field of $\mathcal X$. The $p$-rank of $\mathcal{X}$, also known as the Hasse-Witt invariant of $\mathcal{X}$, is defined as the rank of the (elementary abelian) group of $p$-torsion points in the Jacobian variety of $\mathcal{X}$. If $\gamma(\mathcal{X})$ denotes the $p$-rank of $\mathcal{X}$, then we have $\gamma(\mathcal{X}) \leq {\gg}(\mathcal{X})$, and equality holds if and only if $\mathcal{X}$ is an ordinary curve.

For a subgroup $G$ of $\aut(\cX)$, let $\bar \cX$ be a non-singular model of $\K(\cX)^G$, that is,
a projective non-singular geometrically irreducible algebraic
curve with function field $\K(\cX)^G$, where $\K(\cX)^G$ consists of all elements of $\K(\cX)$
fixed by every element in $G$. Usually, $\bar \cX$ is called the
quotient curve of $\cX$ by $G$ and denoted by $\cX/G$. The field extension $\K(\cX)|\K(\cX)^G$ is Galois of degree $|G|$. Let $\Phi$ denote the associated rational map $\cX\to \bar{\cX}$.

A point $P \in \mathcal{X}$ is called a ramification point of $G$ if the stabilizer $G_P$ of $P$ in $G$ is non-trivial; the ramification index $e_P$ at $P$ is given by $|G_P|$. A point $\bar{Q}\in\bar{\cX}$ is a branch point of $G$ if there is a ramification point $P\in \cX$ such that $\Phi(P)=\bar{Q}$;

The $G$-orbit of $P \in \mathcal{X}$ is the subset of $\mathcal{X}$
$$o = \{ R \mid R = g(P), \, g \in G \},$$
and it is called {\em long} if $|o| = |G|$; otherwise, $o$ is {\em short}. For a point $\bar{Q}$, the $G$-orbit $o$ lying over $\bar{Q}$ consists of all points $P \in \mathcal{X}$ such that $\Phi(P) = \bar{Q}$. If $P\in o$ then $|o|=|G|/|G_P|$ and hence $\bar{Q}$ is a branch point if and only if $o$ is a short $G$-orbit. Note that it is possible for $G$ to have no short orbits, which occurs if and only if every non-trivial element of $G$ is fixed-point-free on $\mathcal{X}$, meaning the cover $\Phi$ is unramified. On the other hand, $\mathcal{X}$ has a finite number of short $G$-orbits.

For a non-negative integer $i$, the $i$-th ramification group of $\mathcal{X}$ at $P$ is denoted by $G_P^{(i)}$ and is defined as
$$G_P^{(i)} = \{ g \mid \ord_P(g(t) - t) \geq i + 1, g \in G_P \},$$
where $t$ is a uniformizing element (local parameter) at $P$. Here, $G_P^{(0)} = G_P$. The structure of $G_P$ is well understood; see, for example, \cite[Chapter IV, Corollary 4]{Serre} or \cite[Theorem 11.49, Theorem 11.60]{HKT}.
\begin{thm}\label{resultstabilizz} 
The stabilizer $G_P$ of a point $P \in \mathcal{X}$ in $G$ has the following properties:
\begin{itemize}
\item[\rm(i)] $G_P^{(1)}$ is the unique Sylow $p$-subgroup of $G_P$;
\item[\rm(ii)] For $i \geq 1$, $G_P^{(i)}$ is a normal subgroup of $G_P$ and the quotient group $G_P^{(i)}/G_P^{(i+1)}$ is an elementary abelian $p$-group;
\item[\rm(iii)] $G_P = G_P^{(1)} \rtimes U$, where $U$ is a cyclic group whose order is prime to $p$.
\end{itemize}
\end{thm}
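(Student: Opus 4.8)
The plan is to run the classical local ramification-theoretic analysis of the action of $G_P$ near $P$, as in \cite[Ch.~IV]{Serre}. Since $P$ is a smooth point of a curve over the algebraically closed field $\K$, the completion of the local ring at $P$ is $\widehat{\mathcal O}_P\cong\K[[t]]$, and each $g\in G_P$ extends to a continuous $\K$-algebra automorphism of $\K[[t]]$, determined by the power series $g(t)=\sum_{k\ge1}c_k(g)t^k$ with $c_1(g)\ne0$. For $1\ne g\in G_P$ set $i_G(g)=\ord_P(g(t)-t)$. The first thing I would establish is that $i_G(g)$ is independent of the parameter $t$: for $x=\sum_k a_k t^k$ we have $g(x)-x=\sum_k a_k(g(t)^k-t^k)$ and $g(t)-t$ divides each $g(t)^k-t^k$, whence $i_G(g)=\min_{x\in\widehat{\mathcal O}_P}\ord_P(g(x)-x)$ is intrinsic. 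Then $G_P^{(i)}=\{g\in G_P:i_G(g)\ge i+1\}$ (with $i_G(1)=+\infty$) is well defined; it is a subgroup because $i_G(gh)\ge\min(i_G(g),i_G(h))$ and $i_G(g^{-1})=i_G(g)$, the chain $G_P=G_P^{(0)}\supseteq G_P^{(1)}\supseteq\cdots$ terminates since $G_P$ is finite, and, applying the intrinsic description with the parameter $h^{-1}(t)$ together with the fact that $h$ preserves $\ord_P$, one gets $i_G(hgh^{-1})=i_G(g)$, so each $G_P^{(i)}$ is normal in $G_P$.

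Next I would compute the successive quotients via leading-coefficient maps. Composing power series gives $c_1(gh)=c_1(g)c_1(h)$, so $g\mapsto c_1(g)$ is a homomorphism $G_P\to\K^{\ast}$ with kernel $G_P^{(1)}$ (the condition $c_1(g)=1$ is exactly $\ord_P(g(t)-t)\ge2$); hence $G_P/G_P^{(1)}$ is a finite subgroup of $\K^{\ast}$, i.e.\ cyclic of order prime to $p$. For $i\ge1$ and $g\in G_P^{(i)}$, writing $g(t)=t+d_i(g)t^{i+1}+O(t^{i+2})$, a short computation yields $d_i(gh)=d_i(g)+d_i(h)$, so $g\mapsto d_i(g)$ is a homomorphism $G_P^{(i)}\to(\K,+)$ with kernel $G_P^{(i+1)}$; thus $G_P^{(i)}/G_P^{(i+1)}$ embeds in the additive group of $\K$, which for $p>0$ forces it to be an elementary abelian $p$-group (and trivial if $p=0$). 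This is (ii).

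Part (i) is then immediate: by (ii) each $G_P^{(i)}/G_P^{(i+1)}$ with $i\ge1$ is a $p$-group, so $|G_P^{(1)}|$ is a power of $p$, while $|G_P/G_P^{(1)}|$ is prime to $p$; a normal subgroup of $p$-power order and index prime to $p$ is the unique Sylow $p$-subgroup of $G_P$. For (iii), $G_P^{(1)}$ is a normal Hall subgroup, so the Schur--Zassenhaus theorem splits $1\to G_P^{(1)}\to G_P\to G_P/G_P^{(1)}\to1$, yielding $G_P=G_P^{(1)}\rtimes U$ with $U\cong G_P/G_P^{(1)}$ cyclic of order prime to $p$. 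I do not foresee a real obstacle here: everything reduces to the power-series bookkeeping of the first two paragraphs — parameter-independence and conjugation-invariance of $i_G$, and the homomorphism properties of the two leading-coefficient maps — while the only external inputs are the identification $\widehat{\mathcal O}_P\cong\K[[t]]$ at a smooth point and the Schur--Zassenhaus theorem. Since all of this is entirely standard, one may alternatively just cite \cite[Ch.~IV]{Serre} or \cite[Thms.~11.49 and 11.60]{HKT}.
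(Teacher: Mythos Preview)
Your proof is correct and follows precisely the classical argument from \cite[Ch.~IV]{Serre}. The paper does not supply its own proof of this statement at all: it is presented as background material with the remark ``see, for example, \cite[Chapter IV, Corollary 4]{Serre} or \cite[Theorem 11.49, Theorem 11.60]{HKT}'', so you have in fact written out what the paper merely cites --- and your closing sentence acknowledging that one could simply invoke those references is exactly in line with how the paper handles it.
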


%Let $\bar{g}$ denote the genus of the quotient curve $\bar{\mathcal{X}} = \mathcal{X}/G$. 
The Hurwitz genus formula applied to $G$ is given by:
\begin{equation}
\label{Hurwitz}
2g(\mathcal{X}) - 2 = |G|(2g(\bar{\mathcal{X}}) - 2) + \sum_{P \in \mathcal{X}} d_P,
\end{equation}
where
\begin{equation}
\label{differente}
d_P = \sum_{i \geq 0} (|G_P^{(i)}| - 1).
\end{equation}

A subgroup of $\aut(\mathcal{X})$ is a $p'$-group (or a prime-to-$p$ group) if its order is prime to $p$. A subgroup $G$ of $\aut(\mathcal{X})$ is {\em tame} if the stabilizer of any point in $G$ is a $p'$-group. Otherwise, $G$ is {\em non-tame} (or {\em wild}). While every $p'$-subgroup of $\aut(\mathcal{X})$ is tame, the converse is not always true.

\begin{thm}\cite[Theorem 11.60]{HKT} \label{theorem11.60HKT}
If $G$ is a subgroup of $\aut(\mathcal{X})$ that fixes a point and $p \nmid |G|$, then
$$|G| \leq 4g(\mathcal{X}) + 2.$$
\end{thm}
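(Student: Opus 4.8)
The plan is as follows. Since $p\nmid|G|$ and $G$ fixes a point $P$, we have $G=G_P$; as $G_P^{(1)}$ is the unique Sylow $p$-subgroup of $G_P$ (Theorem~\ref{resultstabilizz}(i)) it is trivial, and Theorem~\ref{resultstabilizz}(iii) then shows that $G$ is cyclic. Put $N=|G|$ and let $\Phi\colon\cX\to\bar{\cX}=\cX/G$ be the quotient map, tame of degree $N$ and totally ramified over $\bar P:=\Phi(P)$ because $G_P=G$. For a branch point $\bar Q$ the points over it have stabilizers of a common order $e_{\bar Q}$, the orbit over $\bar Q$ has $N/e_{\bar Q}$ points, and $d_{P'}=e_{\bar Q}-1$ for each such point $P'$ by~\eqref{differente}; hence~\eqref{Hurwitz} becomes
\begin{equation}\label{RHtame}
2g(\cX)-2=N\Bigl(2g(\bar{\cX})-2+\sum_{\bar Q}\bigl(1-\tfrac{1}{e_{\bar Q}}\bigr)\Bigr),
\end{equation}
the sum running over the branch points of $G$. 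We may assume $g(\cX)\ge 1$ (the bound is false for $g(\cX)=0$; in the applications $g(\cX)\ge 2$), so the left-hand side of~\eqref{RHtame} is nonnegative.

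If $g(\bar{\cX})\ge1$, then keeping only the branch point $\bar P$ (with $e_{\bar P}=N$) makes the bracket in~\eqref{RHtame} at least $1-\tfrac1N$, so $2g(\cX)-2\ge N-1$ and $N\le 2g(\cX)-1<4g(\cX)+2$. From now on assume $g(\bar{\cX})=0$, i.e. $\bar{\cX}\cong\mathbb{P}^1$. Let $\bar Q_1=\bar P,\bar Q_2,\dots,\bar Q_k$ be the branch points, with indices $e_1=N\ge e_2\ge\cdots\ge e_k\ge2$, all dividing $N$. Since $p\nmid N$, Kummer theory realizes the cover as $\K(\cX)=\K(x)(y)$ with $y^N=\prod_i(x-a_i)^{b_i}$, $1\le b_i\le N-1$: the branch points are the $a_i$ together with $\infty$ (the latter exactly when $N\nmid\sum_i b_i$), with ramification indices $N/\gcd(b_i,N)$ and $N/\gcd(\sum_i b_i,N)$ respectively, and geometric irreducibility of $\cX$ forces the $b_i$ to generate $\mathbb{Z}/N\mathbb{Z}$. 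From this one extracts the key arithmetic fact: \emph{for each branch point, the least common multiple of the ramification indices of the remaining branch points equals $N$} (because deleting one of the exponents $b_1,\dots,b_m,b_\infty=-\sum_i b_i$ still leaves a generating set of $\mathbb{Z}/N\mathbb{Z}$).

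Now~\eqref{RHtame} with $g(\bar{\cX})=0$ reads $2g(\cX)-2=N\bigl(k-2-\sum_{i=1}^k\tfrac{1}{e_i}\bigr)\ge0$, which already forces $k\ge3$ (for $k\le2$ the right-hand side is negative). If $k\ge4$, then $\sum_{i=1}^k\tfrac1{e_i}\le\tfrac1N+\tfrac{k-1}{2}$ gives $2g(\cX)-2\ge N\bigl(\tfrac{k-3}{2}-\tfrac1N\bigr)\ge\tfrac N2-1$, hence $N\le4g(\cX)-2$. So suppose $k=3$. Then $e_1=N$ and, by the key fact, $\lcm(e_2,e_3)=N$; writing $d=\gcd(e_2,e_3)\ge1$ we have $e_2e_3=Nd$, and~\eqref{RHtame} gives $2g(\cX)=N+1-\tfrac{e_2+e_3}{d}$. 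A direct manipulation shows that $N\le4g(\cX)+2$ is equivalent to $2(e_2+e_3)\le e_2e_3+4d$, i.e. to $(e_2-2)(e_3-2)\ge4(1-d)$, which holds since the left-hand side is nonnegative ($e_2,e_3\ge2$) and the right-hand side is nonpositive ($d\ge1$). This proves the bound; equality is attained, for instance, by $(e_1,e_2,e_3)=(N,N/2,2)$ with $N\equiv2\pmod 4$, where $g(\cX)=(N-2)/4$.

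The main obstacle is the case $g(\bar{\cX})=0$ with exactly three branch points: there the Riemann--Hurwitz formula alone does not suffice, and one genuinely needs the arithmetic restriction on the ramification indices of a cyclic cover of the line (concretely $\lcm(e_2,e_3)=N$), which follows from the connectedness of the cover via Kummer theory. Without it, triples such as $(N,3,2)$ or $(N,3,3)$ would satisfy~\eqref{RHtame} yet violate the bound for large $N$.
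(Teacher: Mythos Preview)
The paper does not prove this statement: it is quoted verbatim from \cite[Theorem~11.60]{HKT} as background, with no proof supplied. So there is no ``paper's proof'' to compare against; your task reduces to whether your argument is correct, and it is.

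A couple of remarks on how your proof sits relative to the paper. The decisive ingredient in your $k=3$ analysis---that for a degree-$N$ cyclic cover of $\mathbb{P}^1$ the lcm of the ramification indices at all branch points but one is still $N$---is exactly Proposition~\ref{mcm}(ii)+(iii), which the paper proves later (for its own purposes) via Propositions~\ref{n=1} and~\ref{divisibilità}. Your justification is different and slightly more direct: you observe that the Kummer exponents $b_1,\dots,b_m,b_\infty$ sum to $0$ in $\mathbb{Z}/N\mathbb{Z}$, so any one of them lies in the subgroup generated by the others, and irreducibility forces that subgroup to be all of $\mathbb{Z}/N\mathbb{Z}$; translating orders of cyclic subgroups into ramification indices gives the claim. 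Both arguments are fine; the paper's route avoids writing down an explicit Kummer model, while yours makes the arithmetic transparent.

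One small point of hygiene: your parenthetical ``the bound is false for $g(\cX)=0$'' is true but beside the point, since the paper (and \cite{HKT}) work under the standing hypothesis $g(\cX)\ge 2$; you could simply invoke that hypothesis rather than treat $g(\cX)=1$ at all. The inequality chain in the $k=3$ case is clean: from $\lcm(e_2,e_3)=N$ and $d=\gcd(e_2,e_3)$ you get $e_2e_3=Nd$, and the desired bound $N\le 4g(\cX)+2$ becomes $(e_2-2)(e_3-2)\ge 4(1-d)$, which holds because the left side is $\ge 0$ and the right side is $\le 0$. Your identification of the equality case $(N,N/2,2)$ with $N\equiv 2\pmod 4$ is also correct.
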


\begin{thm}\cite[Theorem 11.79]{HKT} \label{theorem11.79}
If $G$ is an abelian subgroup of $\aut(\mathcal{X})$, then
$$|G| \leq \begin{cases}
4g(\mathcal{X}) + 4, & \text{if } p \neq 2, \\
4g(\mathcal{X}) + 2, & \text{if } p = 2.
\end{cases}$$
\end{thm}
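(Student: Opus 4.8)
The plan is to run the Hurwitz genus formula \eqref{Hurwitz} for $G$ and for the quotient $\bar{\mathcal X}:=\mathcal X/G$, and to control the short orbits of $G$. Since $G$ is abelian, $G_{\sigma(P)}=G_P$ for every $\sigma\in G$, and in fact the whole ramification filtration $\{G_P^{(i)}\}$ is constant along each $G$-orbit; thus to a short orbit $o_i$ ($i=1,\dots,k$) we attach a well-defined ramification index $e_i=|G_P|$ (so $|o_i|=|G|/e_i$) and a well-defined local contribution $d_i:=d_P$ as in \eqref{differente}. Put $g:=g(\mathcal X)$, $\bar g:=g(\bar{\mathcal X})$, and assume $g\ge2$. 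If $\bar g\ge2$, then \eqref{Hurwitz} gives $2g-2\ge2|G|$, so $|G|\le g-1$; if $\bar g=1$, then $2g-2=\sum_P d_P$ and, since $g\ge2$, some short orbit already contributes $|o_i|d_i\ge|G|(1-1/e_i)\ge|G|/2$, so $|G|\le4g-4$. Hence from now on $\bar g=0$, and \eqref{Hurwitz} reads
\begin{equation*}
|G|=\frac{2g-2}{\;\sum_{i=1}^{k}t_i-2\;},\qquad t_i:=\frac{d_i}{e_i},
\end{equation*}
with $\sum_i t_i>2$ forced by $g\ge2$; everything reduces to a good lower bound for $\sum_i t_i$.

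By \eqref{differente}, $t_i\ge1-1/e_i\ge1/2$, with equality only if $e_i=2$ and $G_P$ is tame; and if $G_P$ is non-tame then, by Theorem \ref{resultstabilizz}, $G_P=S\times U$ with $S=G_P^{(1)}\ne1$ a $p$-group and $U$ cyclic of order prime to $p$, so $d_i\ge(e_i-1)+(|S|-1)$ and $t_i\ge1+(|S|-2)/e_i$, which is $>1$ for $p$ odd and $\ge1$ always. Consequently $k\ge5$ already gives $\sum_i t_i\ge5/2$, i.e. $|G|\le4g-4$; only signatures with $k\le4$ and $\sum_i t_i$ slightly above $2$ need further attention, and I would split these according to whether or not some stabilizer $G_P$ is non-tame.

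Suppose first that every short orbit is tame. Then $p\nmid|G|$: otherwise the Sylow $p$-subgroup $S$ of $G$ would act freely, so the inertia groups of $\mathcal X\to\mathbb P^1$ — all of order prime to $p$ — would lie in a complement $H$ of $S$ in $G$ and so could not generate $G$; yet they must, for otherwise they would generate a proper normal subgroup $G'\subsetneq G$, making $\mathcal X/G'\to\mathbb P^1$ an everywhere unramified cover of $\mathbb P^1$, hence trivial. So $\mathcal X\to\mathbb P^1$ is a genuinely tame abelian cover, described by generators $\sigma_i$ of the cyclic inertia groups with $\sigma_1\sigma_2\cdots\sigma_k=1$ and $\langle\sigma_1,\dots,\sigma_k\rangle=G$. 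Here abelianness is decisive: $\sigma_k=(\sigma_1\cdots\sigma_{k-1})^{-1}$ has order dividing $\mathrm{lcm}(e_1,\dots,e_{k-1})$, which together with $\mathrm{ord}(\sigma_k)=e_k$ forces stringent divisibility relations among the $e_i$ and rules out exactly the ``dangerous'' signatures — such as $(2,3,7)$, $(3,2,2,2)$, $(3,4,2,2)$, $(2,2,3,3)$ — for which the displayed identity would yield $|G|>4g+4$. One is then left with a short explicit list to be checked directly; the extremal case is $k=3$ with $(e_1,e_2,e_3)=(2g+2,2g+2,2)$, realized by $G\cong\mathbb Z/(2g+2)\times\mathbb Z/2$ and attaining $|G|=4g+4$. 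If $p=2$, every involution of $G$ has a non-tame stabilizer, so this extremal configuration (and every $|G|=4g+4$ one) is excluded, giving the sharper $|G|\le4g+2$; and whenever a short orbit is a single point fixed by all of $G$ with $G$ tame, $|G|\le4g+2$ is immediate from Theorem \ref{theorem11.60HKT}.

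It remains to treat $\bar g=0$ with some non-tame short orbit $o_1$, at a point $P_1$. Let $H\trianglelefteq G$ be the (nontrivial) $p$-part of $G_{P_1}$. Applying \eqref{Hurwitz} to $H$ on $\mathcal X$ — each of the $|G|/e_1$ points of $o_1$ is fixed by $H$ with $H$-different at least $2(|H|-1)$ — bounds $|G|$ in terms of $g$, $|H|$ and $g(\mathcal X/H)\ge0$, which settles the case of bounded $|H|$; the remaining possibilities are closed by similar Hurwitz estimates for suitable normal subgroups of $G$ (the $p$-part $H$, or the tame cyclic part $U$, of $G_{P_1}$), using Theorems \ref{resultstabilizz} and \ref{theorem11.60HKT} together with the bound $p\le2g+1$ recalled in the Introduction, and using that $k\le2$ can occur only wildly: for $k=1$ the affine line admits no nontrivial tame cover, so $G$ is a $p$-group fixing a point, and for $k=2$ some orbit must be wild (a purely tame cover of $\mathbb P^1$ with two branch points has genus $0$). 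The hard part is precisely this wild analysis — one must follow the full ramification filtration $\{G_P^{(i)}\}$, not merely $|G_P|$, and bound the $p$-local contributions — which has no counterpart in the complex case; a secondary, routine but lengthy, difficulty is the exhaustive enumeration of admissible abelian signatures in the genus-$0$ case.
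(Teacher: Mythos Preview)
The paper does not prove this statement at all: Theorem~\ref{theorem11.79} is quoted verbatim as a background result from \cite[Theorem~11.79]{HKT}, with no argument given. There is therefore no ``paper's own proof'' to compare your sketch against.

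That said, your sketch follows the standard route used in the literature (and in \cite{HKT}): reduce via Hurwitz to $\bar g=0$, then analyze the signature $(e_1,\dots,e_k)$, exploiting abelianness through the product-one relation $\sigma_1\cdots\sigma_k=1$ among inertia generators to eliminate the signatures that would give $|G|>4g+4$, and finally treat wild ramification separately. Two remarks on the sketch itself. First, your argument that ``every short orbit tame $\Rightarrow p\nmid|G|$'' is correct but the phrasing is slightly off: the point is that all inertia subgroups, being $p'$-groups, lie in the $p'$-part of the abelian group $G$, yet they must generate $G$ (else an unramified cover of $\mathbb P^1$), so $G$ is a $p'$-group. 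Second, the wild case (your last paragraph) is where almost all the content lies, and your sketch there is essentially a promissory note; in particular the appeal to ``the bound $p\le 2g+1$ recalled in the Introduction'' is not quite right --- Homma's result concerns a cyclic group of prime order acting on $\mathcal X$, not an arbitrary $p$-element inside an abelian $G$, and in any case the wild analysis in \cite{HKT} proceeds by direct estimates on the higher ramification contributions rather than by quoting that bound. As you yourself note, this is a sketch; the tame portion is essentially complete, the wild portion would need to be fleshed out substantially to count as a proof.
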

If a curve has genus $0$ it is called rational and its automorphism group is isomorpihc to $\mathrm{PGL}(2, \mathbb{K})$.
The following result is a consequence of Dickson's classification of finite subgroups of $\mathrm{PGL}(2, \mathbb{K})$ (see \cite{Dickson}; also \cite[Theorem 11.91]{HKT}).
\begin{prop}
\label{dickson}
Any non-trivial tame automorphism of a rational curve that fixes a point has exactly two fixed points.
\end{prop}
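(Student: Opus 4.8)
The plan is to work with the standard model: a rational curve is $\mathbb{P}^1$, and, as recalled just before the statement, $\aut(\mathcal{X}) \cong \mathrm{PGL}(2,\mathbb{K})$ acting by fractional linear transformations on $\mathbb{P}^1(\mathbb{K})$. Represent the given non-trivial tame automorphism $\sigma$ by a matrix $A \in \mathrm{GL}(2,\mathbb{K})$, well defined up to a nonzero scalar. The key translation is that a point of $\mathbb{P}^1$ is fixed by $\sigma$ exactly when the corresponding line in $\mathbb{K}^2$ is an eigenline of $A$. A first easy observation is that a non-scalar $2\times 2$ matrix has at most two distinct eigenlines (three pairwise independent eigenvectors would force a repeated eigenvalue together with a two-dimensional eigenspace, i.e. $A$ scalar, i.e. $\sigma = \mathrm{id}$); hence it suffices to show $\sigma$ has at least two fixed points.

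Next I would use the hypothesis that $\sigma$ fixes some point, which says precisely that $A$ has an eigenvector. Conjugating in $\mathrm{GL}(2,\mathbb{K})$ — which changes neither the conjugacy type of $\sigma$ nor the number of its fixed points — we may assume $A = \begin{pmatrix} \alpha & \beta \\ 0 & \delta \end{pmatrix}$. If $\alpha \neq \delta$, then $A$ has two distinct eigenvalues, hence two independent eigenlines, so $\sigma$ has exactly two fixed points and we are done. If $\alpha = \delta$ and $\beta = 0$, then $A$ is scalar and $\sigma$ is the identity, contrary to assumption. The only remaining possibility is $\alpha = \delta \neq 0$ and $\beta \neq 0$: here $A$ equals $\alpha$ times a non-identity unipotent matrix, so its image $\sigma \in \mathrm{PGL}(2,\mathbb{K})$ has order a nontrivial power of $p$ (order $p$ itself, in fact). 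This contradicts the tameness of $\sigma$, so this case does not occur, and $\sigma$ must have exactly two fixed points.

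The argument is essentially routine linear algebra, so there is no serious obstacle; the only genuinely characteristic-sensitive step — and the only place where tameness enters — is the exclusion of the unipotent case. This is exactly the phenomenon responsible for the failure of the statement for wild automorphisms, since a single non-identity unipotent element of $\mathrm{PGL}(2,\mathbb{K})$ has precisely one fixed point on $\mathbb{P}^1$. One could alternatively deduce the proposition from Dickson's explicit classification of finite subgroups of $\mathrm{PGL}(2,\mathbb{K})$, by noting that a tame cyclic subgroup is conjugate to a group of diagonal matrices and reading off that each of its nontrivial elements fixes the two coordinate points; the direct computation above has the advantage of not invoking the full classification.
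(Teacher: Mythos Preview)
Your argument is correct. The paper does not give an explicit proof of this proposition; it simply records it as a consequence of Dickson's classification of finite subgroups of $\mathrm{PGL}(2,\mathbb{K})$, citing \cite{Dickson} and \cite[Theorem 11.91]{HKT}. Your approach is genuinely more elementary: rather than invoking the full subgroup classification and reading off that a tame cyclic subgroup is conjugate to a diagonal one, you work directly with a single matrix representative, upper-triangularize it, and separate the diagonalizable case from the unipotent case by comparing the diagonal entries. The payoff is a self-contained half-page argument that isolates exactly where tameness is used (ruling out the unipotent Jordan block of order $p$), whereas the paper's route has the advantage of situating the fact inside a well-known structural theorem that is cited elsewhere anyway. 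You even acknowledge this alternative at the end of your write-up, so the two approaches are compatible rather than competing.

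One incidental remark: since $\mathbb{K}$ is algebraically closed, every $A\in\mathrm{GL}(2,\mathbb{K})$ already has an eigenvector, so the hypothesis ``fixes a point'' is automatic and your upper-triangularization step does not actually need it. This does not affect the correctness of your proof.
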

We will also use additional information on automorphisms of elliptic curves, i.e. curves of genus $1$.
Let $\cE$ be a non-singular plane cubic curve viewed as a birational model of an elliptic curve.
For an inflection point $O$ of $\cE$, the set of points of $\cE$ can be equipped by an operation $\oplus$ to form an abelian group $G_O$ with zero-element $O$, which is isomorphic to the zero Picard group of $\cE$; see for instance \cite[Theorem 6.107]{HKT}. The translation $\tau_P$ associated with $P\in \cE$ is the permutation on the points of $\cE$ with equation $\tau_P: X\mapsto X\oplus P$. Since there exists an automorphism in $\aut(\cE)$ which acts on $\cE$ as $\tau_P$ does, translations of $\cE$ can be viewed as elements of $\aut(\cE)$. They form the translation group $T$ of $\cE$ which acts faithfully on $\cE$ as a sharply transitive permutation group. For every prime $r$, the elements of order $r$ in $T$ are called $r$-torsion points. They together with the identity form an elementary abelian $r$-group of rank $k$. Here $k=2$ for $r\neq p$, whereas $k$ equals the $p$-rank of $\cE$ for $r=p$, that is, $k=0,1$ according as $\cE$ is supersingular or ordinary.
\begin{prop}
\label{strutturaell} If $\cE$ is an elliptic curve, the translation group $T$ is a normal subgroup of $\aut(\cE)$, and $\aut(\cE)=T\rtimes \aut(\cE)_P$ for every $P\in \cE$.
\end{prop}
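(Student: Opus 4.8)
The plan is to fix an inflection point $O$ of $\mathcal{E}$, work with the resulting abelian group $(\mathcal{E},\oplus)=G_O$ having zero $O$, and exploit the isomorphism $G_O\cong\mathrm{Pic}^0(\mathcal{E})$ given by $X\mapsto [X]-[O]$ (this is the cited \cite[Theorem 6.107]{HKT}). Concretely, for all $X,Y\in\mathcal{E}$ one has the linear equivalence
\begin{equation*}
[X\oplus Y]+[O]\sim [X]+[Y],
\end{equation*}
and, $\phi\colon X\mapsto [X]-[O]$ being a bijection, this relation determines $X\oplus Y$. For $P\in\mathcal{E}$ write $\tau_P\in\aut(\mathcal{E})$ for the translation $X\mapsto X\oplus P$, so that $\tau_P\circ\tau_Q=\tau_{P\oplus Q}$ and $T=\{\tau_P\mid P\in\mathcal{E}\}$ acts on $\mathcal{E}$ as a sharply transitive permutation group.

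I would first dispatch the two elementary parts of the decomposition. Since $T$ is sharply transitive, $\tau_P$ fixes a point of $\mathcal{E}$ only when $P=O$, i.e.\ $\tau_P=\mathrm{id}$; hence $T\cap\aut(\mathcal{E})_P=\{\mathrm{id}\}$ for every $P\in\mathcal{E}$. Next, given $\sigma\in\aut(\mathcal{E})$ and $P\in\mathcal{E}$, transitivity yields $\tau:=\tau_{\sigma(P)\ominus P}$ with $\tau(P)=\sigma(P)$, so that $\tau^{-1}\sigma$ fixes $P$ and $\sigma=\tau\cdot(\tau^{-1}\sigma)\in T\cdot\aut(\mathcal{E})_P$. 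Thus $\aut(\mathcal{E})=T\cdot\aut(\mathcal{E})_P$ with trivial intersection, for every $P$.

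The main point is the normality of $T$, which I would prove by a direct computation in the Picard group, obtaining the explicit conjugation rule
\begin{equation*}
\sigma\,\tau_P\,\sigma^{-1}=\tau_{\sigma(P)\ominus\sigma(O)}\qquad\text{for all }\sigma\in\aut(\mathcal{E}),\ P\in\mathcal{E}.
\end{equation*}
Indeed, let $\sigma_*$ denote the induced push-forward on divisors, $\sigma_*(\sum n_Y[Y])=\sum n_Y[\sigma(Y)]$; since $\sigma_*(\div(f))=\div(f\circ\sigma^{-1})$, the map $\sigma_*$ sends principal divisors to principal divisors and hence preserves linear equivalence. Applying $\sigma_*$ to $[\sigma^{-1}(X)\oplus P]+[O]\sim [\sigma^{-1}(X)]+[P]$ gives, for every $X\in\mathcal{E}$,
\begin{equation*}
\big[(\sigma\tau_P\sigma^{-1})(X)\big]+[\sigma(O)]\sim [X]+[\sigma(P)],
\end{equation*}
which by the characterization of $\oplus$ recorded above says exactly that $(\sigma\tau_P\sigma^{-1})(X)=X\oplus(\sigma(P)\ominus\sigma(O))$. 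In particular $T\trianglelefteq\aut(\mathcal{E})$, and together with the previous paragraph this yields $\aut(\mathcal{E})=T\rtimes\aut(\mathcal{E})_P$ for every $P\in\mathcal{E}$.

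I do not anticipate a genuine obstacle: the statement is classical, and the only step needing a little care is the bookkeeping with divisor push-forwards and linear equivalence in the conjugation computation; everything else follows formally from the sharp transitivity of $T$. As a sanity check, taking $\sigma$ to fix $O$ recovers the familiar fact that each $\rho\in\aut(\mathcal{E})_O$ acts on $T$ by $\rho\tau_P\rho^{-1}=\tau_{\rho(P)}$, i.e.\ $\rho$ is a group automorphism of $G_O$.
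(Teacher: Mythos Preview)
The paper states this proposition without proof, as a piece of background on elliptic curves. Your argument is correct and self-contained: the factorization $\aut(\cE)=T\cdot\aut(\cE)_P$ and the triviality of $T\cap\aut(\cE)_P$ follow immediately from the sharp transitivity of $T$, and your Picard-group computation of the conjugation formula $\sigma\,\tau_P\,\sigma^{-1}=\tau_{\sigma(P)\ominus\sigma(O)}$ is valid (the push-forward $\sigma_*$ preserves linear equivalence because $\sigma_*(\div f)=\div(f\circ\sigma^{-1})$, and the rest is a direct unwinding of the characterization $[X\oplus Y]+[O]\sim[X]+[Y]$). So you have supplied a clean proof where the paper gives none.
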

Concerning automorphisms of elliptic curves fixing a point, the following result holds.
\begin{thm}[{\cite[Theorem 10.1]{silverman2009}, \cite[Theorem 11.94]{HKT}}]\label{autelliptic}
Let $\mathcal{E}$ be an elliptic curve. If $G$ is a subgroup of $\aut(\mathcal{E})$ and $P \in \mathcal{E}$, then
\begin{equation}
|G_P| = \left\{
\begin{array}{ll}
2, 4, 6 & \text{when } p \neq 2, 3, \\
2, 4, 6, 12 & \text{when } p = 3, \\
2, 4, 6, 8, 12, 24 & \text{when } p = 2.
\end{array}
\right.
\end{equation}
\end{thm}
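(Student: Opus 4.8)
The plan is to reduce the statement to the determination of a single point stabilizer in the full automorphism group, and then to read off its structure from a Weierstrass model. By Proposition~\ref{strutturaell} the translation group $T$ acts sharply transitively on $\mathcal{E}$ and $\aut(\mathcal{E})=T\rtimes\aut(\mathcal{E})_Q$ for every $Q\in\mathcal{E}$; taking the translation $\tau$ with $\tau(O)=P$ for a fixed inflection point $O$ gives $\aut(\mathcal{E})_P=\tau\,\aut(\mathcal{E})_O\,\tau^{-1}$, so $G_P$ is isomorphic to a subgroup of $\aut(\mathcal{E})_O$. It therefore suffices to show that $|\aut(\mathcal{E})_O|$ lies in $\{2,4,6\}$, respectively $\{2,4,6,12\}$, respectively $\{2,4,6,8,12,24\}$, whence $|G_P|$ divides one of the listed values. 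Take $O$ as the identity of the group law; since every element of $\aut(\mathcal{E})_O$ fixes the zero of the group law it is an automorphism of the group $\mathcal{E}$, and being compatible with the Weierstrass class it is realized by one of the standard coordinate changes $(x,y)\mapsto(u^2x+r,\,u^3y+su^2x+t)$ with $u\in\mathbb{K}^\ast$, $r,s,t\in\mathbb{K}$, fixing a chosen normalized equation. The task thus becomes the computational one of enumerating the admissible quadruples $(u,r,s,t)$.

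For $p\geq 5$ one normalizes to $y^2=x^3+Ax+B$; invariance forces $r=s=t=0$, $u^4A=A$ and $u^6B=B$, so $\aut(\mathcal{E})_O$ embeds in $\mathbb{K}^\ast$ via $u$ with image cyclic of order $2$ if $AB\neq0$, of order $4$ if $B=0$ (i.e. $j=1728$), and of order $6$ if $A=0$ (i.e. $j=0$), the case $A=B=0$ being excluded by smoothness. That no wild part can occur is seen a priori: by Theorem~\ref{theorem11.60HKT} any tame stabilizer already has order at most $4g(\mathcal{E})+2=6$, while applying the Hurwitz formula~\eqref{Hurwitz} and Theorem~\ref{resultstabilizz} to an order-$p$ automorphism $\sigma$ fixing $O$ forces $\mathcal{E}/\langle\sigma\rangle$ to have genus $0$ and $d_O=(m+1)(p-1)=2p$ for some integer $m\geq1$, which is solvable only for $p=3$ ($m=2$) and $p=2$ ($m=3$); this is precisely where genuinely new behaviour appears.

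For $p\in\{2,3\}$ one has $j=0=1728$, and the cases $j\neq0$ and $j=0$ must be separated by passing to the appropriate normal forms: $y^2=x^3+a_2x^2+a_6$ ($p=3$) and $y^2+xy=x^3+a_2x^2+a_6$ ($p=2$) when $j\neq0$, for which the same computation again yields $\aut(\mathcal{E})_O\cong\mathbb{Z}/2$; and $y^2=x^3+a_4x+a_6$ ($p=3$) and $y^2+a_3y=x^3+a_4x+a_6$ ($p=2$) when $j=0$, where now nontrivial values of $r,s,t$ survive. Working out all coordinate changes preserving these last two forms and composing them as a group, one obtains a non-abelian group of order $12$, namely $\mathbb{Z}/3\rtimes\mathbb{Z}/4$ (the dicyclic group, with an element of order $4$ squaring to $[-1]$), in characteristic $3$, and one of order $24$, isomorphic to $\mathrm{SL}_2(\mathbb{F}_3)$ with Sylow $2$-subgroup $Q_8$, in characteristic $2$; listing the orders of their subgroups then yields exactly $\{2,4,6,12\}$ and $\{2,4,6,8,12,24\}$. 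I expect this last step --- the explicit bookkeeping of the admissible substitutions for the $j=0$ forms in small characteristic, together with the identification of the resulting non-abelian groups and their subgroup lattices --- to be the main obstacle; the remaining cases are routine once the reduction through Proposition~\ref{strutturaell} is in place.
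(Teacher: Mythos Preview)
The paper does not prove this theorem; it is quoted from Silverman and from Hirschfeld--Korchm\'aros--Torres as background, with no argument given. Your sketch follows exactly the standard proof in those references: reduce via Proposition~\ref{strutturaell} to the stabilizer of the origin, then determine $\aut(\mathcal{E})_O$ by classifying the admissible Weierstrass substitutions. So there is no alternative route in the paper to compare against, and your outline is the expected one.

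One genuine slip in your last paragraph: you assert that listing the subgroup orders of $\mathrm{SL}_2(\mathbb{F}_3)$ yields exactly $\{2,4,6,8,12,24\}$. This is false. The commutator subgroup of $\mathrm{SL}_2(\mathbb{F}_3)$ is $Q_8$, so the abelianization has order $3$ and there is no subgroup of index $2$; hence $\mathrm{SL}_2(\mathbb{F}_3)$ has \emph{no} subgroup of order $12$. Its subgroup orders are $1,2,3,4,6,8,24$. The appearance of $12$ in the displayed list for $p=2$ is an artefact of how the statement is phrased in the paper (it is effectively recording the even divisors of $|\aut(\mathcal{E})_O|$ rather than the orders of subgroups that actually occur), and the applications later in the paper only use the theorem as an upper bound, so this does no harm there. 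But if you are writing out a proof, you should not claim that the subgroup enumeration produces $12$; either note explicitly that $12$ divides $24$ but is not realized, or reformulate the conclusion as ``$|G_P|$ divides $|\aut(\mathcal{E})_O|$'' together with the list of possible values of $|\aut(\mathcal{E})_O|$.

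A smaller point: in your Hurwitz argument for $p\geq 5$ you implicitly use that an order-$p$ automorphism fixing a point has exactly one fixed point. This is true (for $p\geq 3$ each fixed point contributes at least $2(p-1)$ to the different, and $2p<4(p-1)$), but it deserves a sentence, since for $p=2$ two fixed points are numerically possible.
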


\begin{prop}\label{ellittica}
Let $\mathcal{E}$ be an elliptic curve defined over a field of characteristic $p > 2$, and let $\alpha$ be an automorphism of $\mathcal{E}$ of finite order $o(\alpha)$. 
\begin{itemize}
\item[(a)] If $o(\alpha) = p^h$, then either $\mathcal{E}$ is not ordinary, $p^h = 3$, and $\alpha$ does not act semiregularly on $\mathcal{E}$, or $\mathcal{E}$ is ordinary and $\alpha$ acts semiregularly on $\mathcal{E}$.

\item[(b)] If $(o(\alpha), p) = 1$ and $\alpha$ does not act semiregularly on $\mathcal{E}$, then $\alpha$ has order in $\{2, 3, 4, 6\}$. Specifically:
\begin{itemize}
\item If $o(\alpha) = 2$, then $\alpha$ fixes 4 points and acts semiregularly on the other points of $\mathcal{E}$.
\item If $o(\alpha) = 3$, then $\alpha$ fixes 3 points and acts semiregularly on the other points of $\mathcal{E}$.
\item If $o(\alpha) = 4$, then $\alpha$ fixes 2 points, and the only other short orbit has size 2.
\item If $o(\alpha) = 6$, then $\alpha$ fixes 1 point, and the only other short orbits are two orbits of sizes 2 and 3.
\end{itemize}
\end{itemize}
\end{prop}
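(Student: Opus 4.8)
The plan is to combine the semidirect-product description $\aut(\mathcal{E})=T\rtimes\aut(\mathcal{E})_O$ from Proposition~\ref{strutturaell} with the list of admissible point-stabilizers in Theorem~\ref{autelliptic}. The first step I would take is an auxiliary equivalence: for $\alpha\in\aut(\mathcal{E})$, the cyclic group $\langle\alpha\rangle$ acts semiregularly on $\mathcal{E}$ if and only if $\alpha$ is a (possibly trivial) translation. Indeed, writing $\alpha=\tau_R\circ\beta$ with $\tau_R\in T$ and $\beta\in\aut(\mathcal{E})_O$, modulo $T$ the image of $\langle\alpha\rangle$ is $\langle\beta\rangle$; if $\beta=\mathrm{id}$ then $\alpha$ is a translation, fixed-point-free unless trivial, while if $\beta\neq\mathrm{id}$ then $\mathrm{id}_{\mathcal{E}}-\beta$ is a nonzero endomorphism of $\mathcal{E}$, hence surjective, so $R=S\ominus\beta(S)$ for some $S$, which makes $S$ a fixed point of $\alpha$. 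The consequence I will use throughout is: if $\langle\alpha\rangle$ is not semiregular, then $\alpha$ itself fixes a point $P$, so $o(\alpha)\mid|\aut(\mathcal{E})_P|$.

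For part (a), let $o(\alpha)=p^h$ with $h\ge1$. If $\langle\alpha\rangle$ acts semiregularly, the equivalence forces $\alpha$ to be a nontrivial translation of order $p^h$, so $\mathcal{E}$ has a nonzero $p$-torsion point and is ordinary; this is the second alternative. Otherwise $p^h$ divides $|\aut(\mathcal{E})_P|$ for some $P$, which by Theorem~\ref{autelliptic} lies in $\{2,4,6\}$ when $p\ge5$; this is impossible, since $p\mid p^h$ but $p$ divides none of $2,4,6$. Hence $p=3$, and then $3^h\mid|\aut(\mathcal{E})_P|\in\{2,4,6,12\}$ forces $h=1$, i.e.\ $p^h=3$. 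To finish I must show $\mathcal{E}$ is not ordinary here, which I would do by contradiction: an ordinary curve in characteristic $3$ carries a translation $\tau$ of order $3$, and $\langle\tau\rangle$, being the unique subgroup of order $3$ of $T$, is characteristic in $T$, hence normal in $\aut(\mathcal{E})$; since $|\aut(\langle\tau\rangle)|=2$ is prime to $3$, $\alpha$ centralizes $\tau$, and since $\alpha\notin T$ (it fixes $P$ and is nontrivial) we get $\langle\alpha\rangle\cap\langle\tau\rangle=\{\mathrm{id}\}$ and $\langle\alpha,\tau\rangle\cong(\mathbb{Z}/3\mathbb{Z})^2$ of order $9$, contradicting the abelian bound $|G|\le 4g(\mathcal{E})+4=8$ of Theorem~\ref{theorem11.79}. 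So $\mathcal{E}$ is supersingular and $\alpha$ fixes $P$, which is the first alternative.

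For part (b), $o(\alpha)$ is prime to $p$ and $\langle\alpha\rangle$ is not semiregular, so by the equivalence $\alpha\neq\mathrm{id}$ fixes a point $P$ and $o(\alpha)\mid|\aut(\mathcal{E})_P|$; by Theorem~\ref{autelliptic} (discarding the factor $p$ in the value $12$ when $p=3$) this yields $o(\alpha)\in\{2,3,4,6\}$. For the ramification data I would pass to $\Phi\colon\mathcal{E}\to\mathcal{E}/\langle\alpha\rangle$. As $\langle\alpha\rangle$ is tame and $P$ is a ramification point, the Hurwitz formula~\eqref{Hurwitz} gives $g(\mathcal{E}/\langle\alpha\rangle)=0$ together with
\[
2=\sum_{\bar Q}\Bigl(1-\tfrac{1}{e_{\bar Q}}\Bigr),\qquad e_{\bar Q}\mid o(\alpha),\quad e_{\bar Q}\ge2,
\]
where $\bar Q$ runs over the branch points. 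For each $o(\alpha)\in\{2,3,4,6\}$ this relation has only finitely many solutions in the multiset $\{e_{\bar Q}\}$, and the requirement that $\alpha$ fixes a point (so some $e_{\bar Q}$ equals $o(\alpha)$) selects a unique one: $\{2,2,2,2\}$, $\{3,3,3\}$, $\{4,4,2\}$, $\{6,3,2\}$, respectively. Translating each $e_{\bar Q}$ into the orbit size $o(\alpha)/e_{\bar Q}$ reproduces the fixed-point counts and short-orbit sizes listed in (b).

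The step I expect to be the real obstacle is the characteristic-$3$ part of (a): Theorem~\ref{autelliptic} permits a point-stabilizer of order $12$ there, so an order-$3$ automorphism genuinely can fix a point, and excluding ordinarity is not formal — it relies on exhibiting the abelian subgroup of order $9$ and invoking the bound $4g+4$. The only other delicate point is the auxiliary equivalence, which rests on $\mathrm{id}_{\mathcal{E}}-\beta$ being surjective for $\beta\neq\mathrm{id}$; the remaining arguments are routine bookkeeping with the Hurwitz formula and the stabilizer list.
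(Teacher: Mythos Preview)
Your auxiliary equivalence (semiregular $\Leftrightarrow$ translation) is correct, and your treatment of part~(b) is both correct and a bit more direct than the paper's: by first pinning down $o(\alpha)\in\{2,3,4,6\}$ via the point-stabilizer bound from Theorem~\ref{autelliptic}, you bypass the paper's detour through \cite[Lemmas 11.106, 11.107]{HKT}, which it needs in order to determine $m$ only \emph{after} solving the Hurwitz equation in the unknown ramification type.

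There is, however, a genuine gap in part~(a), exactly at the step you flagged as delicate. You build an abelian subgroup $\langle\alpha,\tau\rangle\cong(\mathbb{Z}/3\mathbb{Z})^2$ of order $9$ and invoke Theorem~\ref{theorem11.79} to contradict the bound $4g(\mathcal{E})+4=8$. But Theorem~\ref{theorem11.79} is a statement about curves of genus at least $2$; it is plainly false for elliptic curves, since for any prime $\ell\neq p$ the translations by $\ell$-torsion points already furnish an abelian subgroup of order $\ell^{2}$. So no contradiction is obtained, and the ordinarity of $\mathcal{E}$ is not ruled out by your argument.

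The paper closes this case differently. It observes that $\langle\alpha\rangle T/T\cong\langle\alpha\rangle/(\langle\alpha\rangle\cap T)$ embeds in $\aut(\mathcal{E})_O$, and then uses the fact that for an \emph{ordinary} elliptic curve in characteristic $3$ the order of $\aut(\mathcal{E})_O$ is not divisible by $3$: the only curve in characteristic $3$ with $3\mid|\aut(\mathcal{E})_O|$ has $j$-invariant $0$ and is supersingular (cf.\ \cite[Theorem 10.1]{silverman2009}, \cite[Theorem 11.94]{HKT}). This forces $\langle\alpha\rangle\subset T$, i.e.\ $\alpha$ is a translation, contradicting the fixed point. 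Your construction of the order-$9$ group is fine, but the contradiction must come from this structural fact about $\aut(\mathcal{E})_O$ in characteristic $3$, not from a higher-genus bound.
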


\begin{proof}
\begin{itemize}
    \item[(a)] If $\mathcal{E}$ is not ordinary, then $\langle\alpha\rangle$ fixes a point and acts semiregularly on the other points. By \cite[Theorem 11.94]{HKT}, we have $h = 1$ and $p = 3$.

    Assume now that $\mathcal{E}$ is ordinary. Then, by Proposition \ref{autelliptic}, $\langle\alpha\rangle T/T \cong \langle\alpha\rangle / (\langle\alpha\rangle \cap T)$ is either trivial or a $p$-group and is isomorphic to a subgroup of the stabilizer of $O$. Note that $p$ does not divide the size of the stabilizer of $O$; this follows directly from \cite[Theorem 11.94]{HKT} for $p > 3$. For $p = 3$, it can be checked that 3 divides the size of the stabilizer of $O$ only if the curve is not ordinary. Therefore, $\langle\alpha\rangle = \langle\alpha\rangle \cap T$ and $\alpha$ belongs to $T$, acting semiregularly on $\mathcal{E}$.

    \item[(b)] Let $m = o(\alpha)$. Since $\alpha$ does not act semiregularly on $\mathcal{E}$, the genus of the quotient curve $\mathcal{E} / \langle\alpha\rangle$ is 0. Applying the Riemann-Hurwitz genus formula for the group generated by $\alpha$, with $\Omega_1, \ldots, \Omega_n$ denoting the short orbits under the action of $\alpha$, we have:
    \begin{equation*}
    0 = -2m + \sum_{i=1}^n (m - |\Omega_i|) \geq -2m + \frac{m}{2}n.
    \end{equation*}
    Thus, $3 \le n \le 4$. If $n = 3$, then:
    \begin{equation*}
    m = |\Omega_1| + |\Omega_2| + |\Omega_3|.
    \end{equation*}
    Considering the equation $\frac{1}{e_1} + \frac{1}{e_2} + \frac{1}{e_3} = 1$, where $e_i$ is the size of the stabilizer of a point in $\Omega_i$ and assuming $e_1 \le e_2 \le e_3$, and using \cite[Theorem 11.94]{HKT}, the possible values for $e_i$ are $\{2, 3, 4, 6\}$. If $e_i = 3$ or $6$ for some $i$, then $p > 3$. The only possibilities are:
    \begin{itemize}
    \item $e_1 = 2$, $e_2 = e_3 = 4$; hence, $4 \mid m$, and an element of order 2 in $\langle\alpha\rangle$ fixes $m$ points. By \cite[Lemma 11.107]{HKT}, we get $m = 4$.
    \item $e_1 = e_2 = e_3 = 3$; hence, $3 \mid m$, and an element of order 3 in $\langle\alpha\rangle$ fixes $m$ points. By \cite[Lemma 11.106]{HKT}, this implies $m = 3$.
    \item $e_1 = 2$, $e_2 = 3$, $e_3 = 6$; hence, $6 \mid m$, and the elements of orders 2, 3, and 6 in $\alpha$ fix $\frac{2m}{6}$, $\frac{m}{3} + \frac{m}{6}$, and $\frac{m}{6}$ points respectively. By \cite[Lemma 11.106]{HKT}, this implies $m = 6$.
    \end{itemize}
     If $n = 4$, then:
    \begin{equation*}
    2m = |\Omega_1| + |\Omega_2| + |\Omega_3| + |\Omega_4|.
    \end{equation*}
    The only possibility is $|\Omega_i| = m / 2$ for each $i$, and hence the element of order 2 in $\alpha$ fixes $2m$ points. By \cite[Lemma 11.107]{HKT}, this implies $m = 2$.
\end{itemize}
\end{proof}

We need the following corollary to Theorem 1.5 from \cite{MZ}. Although stated for the algebraic closure of a finite field, this result holds for every algebraically closed field of positive characteristic $p$.

\begin{prop}\label{MZ}
Let $\mathcal{X}$ be a curve of genus $g(\mathcal{X}) = q - 1$, where $q$ is a power of $p$. If $\operatorname{Aut}(\mathcal{X})$ contains an elementary abelian subgroup $E_q$ of order $q$, then, up to birational equivalence over $\mathbb{K}$, one of the following holds:
\begin{itemize}
    \item[(a)] $\mathcal{X}$ has an affine equation
    \begin{equation*}
    L_1(y) = ax + \frac{1}{x},
    \end{equation*}
    where $a \in \mathbb{K}^*$ and $L_1(T) \in \mathbb{K}[T]$ is a separable $p$-linearized polynomial of degree $q$. Furthermore, $\mathcal{X}$ is ordinary and $|\operatorname{Aut}(\mathcal{X})| = 4q$.
    \item[(b)] If $p \neq 3$, then $\mathcal{X}$ has an affine equation
    \begin{equation*}
    L_2(y) = x^3 + bx,
    \end{equation*}
    where $b \in \mathbb{K}$ and $L_2(T) \in \mathbb{K}[T]$ is a separable $p$-linearized polynomial of degree $q$. Furthermore, the $p$-rank of $\mathcal{X}$ is equal to zero.
\end{itemize}
\end{prop}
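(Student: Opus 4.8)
The plan is to reduce everything to the case $\mathbb{K}=\overline{\mathbb{F}_p}$, where the assertion is the pertinent special case of \cite[Theorem~1.5]{MZ}, and then to propagate the conclusion. A curve $\mathcal{X}/\mathbb{K}$ of genus $q-1$ together with an elementary abelian $E_q\le\operatorname{Aut}(\mathcal{X})$ is defined over a subfield $R_0\subseteq\mathbb{K}$ finitely generated over $\overline{\mathbb{F}_p}$; after spreading out, there exist an integral affine scheme $S$ of finite type over $\overline{\mathbb{F}_p}$, a smooth proper family of curves $\mathfrak{X}\to S$ of genus $q-1$ carrying a fibrewise faithful action of $E_q$, and an embedding $\mathbb{K}(S)\hookrightarrow\mathbb{K}$ recovering $\mathcal{X}$ on the generic fibre. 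Shrinking $S$, I may moreover assume that the quotient $\mathfrak{X}/E_q\to S$ is smooth proper and that its branch divisor is finite étale over $S$, so that the genus of the quotient, the number of branch points and the ramification filtrations at the ramified places do not depend on the point of $S$. Specializing at a closed point $s\in S$—whose residue field is $\overline{\mathbb{F}_p}$, the latter being algebraically closed—produces a curve $\mathfrak{X}_s$ over $\overline{\mathbb{F}_p}$ of genus $q-1$ admitting $E_q$ as an automorphism group, to which \cite[Theorem~1.5]{MZ} applies.

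Next I would transfer the classification back to $\mathcal{X}$. By \cite[Theorem~1.5]{MZ} the curve $\mathfrak{X}_s$ falls into type (a) or (b); in both cases $\mathfrak{X}_s/E_q\cong\mathbb{P}^1$ and the cover $\mathfrak{X}_s\to\mathfrak{X}_s/E_q$ is branched either at two places, each totally ramified and with a single ramification jump at $1$, or—when $p\neq 3$—at one place, totally ramified with a single jump at $3$. Since, by construction, these discrete invariants are the same for the generic fibre, $\mathcal{X}/E_q\cong\mathbb{P}^1$ with the corresponding ramification; normalizing the coordinate on $\mathbb{P}^1$ so that the branch locus is $\{0,\infty\}$ in the first case and $\{\infty\}$ in the second, and using that an elementary abelian $p$-cover of $\mathbb{P}^1$ is cut out by one separable $p$-linearized equation $L(y)=f(x)$ whose right-hand side has, at each totally ramified place, pole order equal to the ramification jump there, one gets $\mathbb{K}(\mathcal{X})=\mathbb{K}(x)(y)$ with $L_1(y)=ax+1/x$, $a\in\mathbb{K}^*$, in case (a), and with $L_2(y)=x^3+bx$, $b\in\mathbb{K}$, in case (b); here $p\neq 3$ is exactly what makes the pole order $3$ prime to $p$. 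The additional statements—ordinarity and $|\operatorname{Aut}(\mathcal{X})|=4q$ in case (a), and $\gamma(\mathcal{X})=0$ in case (b)—then follow from the Deuring--Shafarevich formula applied to $E_q$ and from the explicit computation of the automorphism group of $L_1(y)=ax+1/x$, as in \cite{MZ}.

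Alternatively, the geometric content can be recovered directly over $\mathbb{K}$. Applying \eqref{Hurwitz} to $E_q$ gives $2q-4=q(2g(\mathcal{X}/E_q)-2)+\deg\mathfrak{D}$, hence $g(\mathcal{X}/E_q)\le 1$; the value $1$ is impossible, since then $\deg\mathfrak{D}=2q-4$, which forces a single branch point via $\deg\mathfrak{D}\ge 2rq(p-1)/p$, but is incompatible both with total ramification there (which yields $d_P\ge 2q-2$, as $G_P^{(0)}=G_P^{(1)}=E_q$) and with partial ramification there (a complement $H\le E_q$ of the inertia group of order $p^k$ would act freely on $\mathcal{X}$, whence $\mathcal{X}/H$ would have genus $g'$ with $2g'-2=(2q-4)p^k/q$, non-integral for $p$ odd and $k<n$). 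Therefore $\mathcal{X}/E_q\cong\mathbb{P}^1$, $\deg\mathfrak{D}=4q-4$, and the same two bounds leave at most two branch points, totally ramified, with only the two admissible configurations recorded above. I expect the crux, in this direct route, to be the final step—converting the admissible ramification filtrations into the explicit normal forms—because it requires the precise dictionary between elementary abelian $p$-covers of $\mathbb{P}^1$ and $p$-linearized equations $L(y)=f(x)$, including why $f$ can be chosen with a single pole of order $1$ (resp. $3$); in the family-theoretic route, the analogous delicate point is ensuring that specialization preserves the genus, the faithfulness of the $E_q$-action and the structure of the quotient, which is the purpose of the spreading-out and the shrinking of $S$.
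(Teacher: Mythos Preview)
The paper does not supply a proof of this proposition: it is stated as a corollary of \cite[Theorem~1.5]{MZ}, with the one-line remark that ``although stated for the algebraic closure of a finite field, this result holds for every algebraically closed field of positive characteristic $p$.'' So your proposal is already far more detailed than what the paper provides.

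Both of your routes are sound in outline. A comment on each:

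\emph{Spreading out.} The argument is correct, but note the slight circularity in the transfer step: after specializing, you read off the ramification data of $\mathfrak{X}_s\to\mathfrak{X}_s/E_q$ from \cite{MZ}, observe it is constant in the family, and then rebuild the normal form for $\mathcal{X}$ from that ramification data over $\mathbb{K}$. That last reconstruction is essentially the core of the \cite{MZ} argument itself, just rerun over $\mathbb{K}$. The cheaper justification the paper has in mind is simply that the proof of \cite[Theorem~1.5]{MZ} uses nothing about $\overline{\mathbb{F}_p}$ beyond its being algebraically closed of characteristic $p$, and therefore goes through verbatim over $\mathbb{K}$.

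\emph{Direct route.} Your Riemann--Hurwitz analysis is correct (and the complement trick for ruling out partial ramification is nice; note it uses $p$ odd, which is the standing hypothesis of the paper). You have honestly flagged the genuine work: showing that $r=2$ forces both branch points to be totally ramified with a single jump at $1$, that $r=1$ forces a single jump at $3$ (hence $p\neq 3$), and then passing from these filtrations to the normal forms $L(y)=ax+1/x$ and $L(y)=x^3+bx$. That step needs the structure theory of elementary abelian $p$-covers of $\mathbb{P}^1$ (e.g.\ writing the extension as $L(y)=f(x)$ and reducing $f$ by Artin--Schreier substitutions), which is exactly what \cite{MZ} carries out.
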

We now prove a basic result  on Kummer extensions that will be used throughout the paper.
\begin{prop}\label{n=1}
The number of branch points of a Kummer extension is different from $1$.
\end{prop}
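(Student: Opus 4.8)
The plan is a short argument by contradiction that combines the standard description of the ramification in a Kummer cover with the fact that a principal divisor on a curve has degree zero.

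First I would fix notation for a Kummer extension: write it as $\K(\cX)=\K(\bar{\cX})(y)$ with $y^n=f$, where $f\in\K(\bar{\cX})\setminus\{0\}$, $\gcd(n,p)=1$, and $\bar{\cX}=\cX/G$ for the associated cyclic group $G$ of order $n$ (recall that $\K$, being algebraically closed, contains the $n$-th roots of unity, so such an extension is automatically Galois with cyclic Galois group). Then I would invoke the classical description of the branch locus of a Kummer cover (see, e.g., \cite{HKT}): a place $\bar{Q}$ of $\K(\bar{\cX})$ is a branch point of $G$ if and only if $n\nmid v_{\bar{Q}}(f)$. I would note that this condition is unaffected when $f$ is multiplied by an $n$-th power of $\K(\bar{\cX})^{*}$, since $v_{\bar{Q}}(fg^{n})\equiv v_{\bar{Q}}(f)\pmod n$; hence the branch locus is an intrinsic invariant of the extension and the statement is well posed.

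Now suppose, for a contradiction, that the extension has exactly one branch point, say $\bar{Q}_{0}$. Then $n\mid v_{\bar{Q}}(f)$ for every place $\bar{Q}\neq\bar{Q}_{0}$ of $\K(\bar{\cX})$. On the other hand $\div(f)$ is a principal divisor, so $\deg\div(f)=0$; since $\K$ is algebraically closed, every place has degree $1$, and therefore $\sum_{\bar{Q}}v_{\bar{Q}}(f)=0$. Reducing this relation modulo $n$ annihilates every summand with $\bar{Q}\neq\bar{Q}_{0}$, leaving $v_{\bar{Q}_{0}}(f)\equiv 0\pmod n$. Thus $\bar{Q}_{0}$ fails the branch-point criterion, a contradiction. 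Hence a Kummer extension has either no branch point or at least two, and in particular the number of branch points is never $1$.

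I do not expect any genuine obstacle here; the argument is essentially a one-line divisor-degree count. The only points deserving care are (i) pinning down precisely the characterization of the branch locus of a Kummer cover that is being used, so the first step rests on solid ground, and (ii) observing that the value $0$ is genuinely attainable — for instance when $\div(f)=nD$ with $D$ a non-principal divisor, which yields an everywhere-unramified cyclic cover of degree $n$ and is possible as soon as $g(\bar{\cX})\ge 1$ — which is exactly why one can exclude only the value $1$ and not also $0$.
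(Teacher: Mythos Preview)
Your proof is correct and follows essentially the same approach as the paper: both use the Kummer-theoretic characterization of branch points (a place $\bar Q$ is a branch point iff $N\nmid \ord_{\bar Q}(f)$) together with $\deg\div(f)=0$ to derive a contradiction modulo $N$. Your additional remarks on well-definedness of the branch locus and on the attainability of zero branch points are sound but not present in the paper's version.
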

\begin{proof}
Let $\mathbb{K}(\mathcal{X})/\mathbb{K}\left(\mathcal Y\right)$
be a Kummer extension of degree $N$; that is $\mathbb{K}(\mathcal{X})=\mathbb{K}\left(\mathcal Y\right)(y)$ with
$$
y^N - \alpha = 0 \quad \text{and} \quad \alpha \in \mathbb{K}\left(\mathcal{Y}\right).
$$
From Kummer theory (see e.g. \cite[Proposition 3.7.3 (b)]{Stichtenoth}), the branch points of this extension are the points $P$ of $\mathcal Y$ for which $\gcd(\ord_P(\alpha), N) < N$, i.e., $N \nmid \ord_P(\alpha)$. Suppose by contradiction that there is a unique branch point $P_0$. Then,
$$
0 = \deg(\div(\alpha)) = \ord_{P_0}(\alpha) + \sum_{P \neq P_0} \ord_P(\alpha).
$$
Thus,
\begin{equation}\label{equazione}
-\ord_{P_0}(\alpha) = \sum_{P \neq P_0} \ord_P(\alpha),
\end{equation}
which is a contradiction because $N$ divides the right-hand side of \eqref{equazione} but not the left-hand side.
\end{proof}

From now on, $\cX$ is a non-singular algebraic curve of genus $g(\cX)\geq 2$ defined over an algebraically closed field $\mathbb{K}$ of characteristic $p\neq 2$. Also, we will denote by $\sigma$ an automorphism of $\mathcal{X}$ of order $N$, and by $G=\langle\sigma\rangle$ the subgroup of $\aut(\cX)$ generated by $\sigma$.
We conclude this section with  two preliminary results about ramification points in the extension $\mathcal{X}$ over $\mathcal{X}/G$.
\begin{prop}\label{n=0} If $N\geq 2g(\cX)+1$, the extension $\cX$ over $\mathcal{X}/G$ ramifies.
\end{prop}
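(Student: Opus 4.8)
The statement is that the cover $\Phi\colon\cX\to\cX/G$ cannot be unramified. The natural strategy is proof by contradiction via the Hurwitz genus formula \eqref{Hurwitz}: assume $\Phi$ is unramified, so that every $d_P=0$, and compare the resulting identity with the hypothesis $N\ge 2g(\cX)+1$.

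\textbf{Key steps.} First I would recall that, as noted in the excerpt, $\Phi$ is unramified if and only if $G$ has no short orbits, i.e. every non-trivial element of $G$ is fixed-point-free. Under this assumption, \eqref{Hurwitz} reduces to
\begin{equation*}
2g(\cX)-2 = N\bigl(2g(\cX/G)-2\bigr).
\end{equation*}
Since $g(\cX)\ge 2$, the left-hand side is a positive integer; consequently $2g(\cX/G)-2>0$, which forces $g(\cX/G)\ge 2$ (the cases $g(\cX/G)=0$ and $g(\cX/G)=1$ are immediately excluded because they make the right-hand side negative or zero). Then
\begin{equation*}
2g(\cX)-2 = N\bigl(2g(\cX/G)-2\bigr)\ge 2N\ge 2\bigl(2g(\cX)+1\bigr)=4g(\cX)+2,
\end{equation*}
whence $-2g(\cX)\ge 4$, i.e. $g(\cX)\le -2$, contradicting $g(\cX)\ge 2$. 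Hence $\Phi$ must ramify.

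\textbf{Main obstacle.} There is essentially no obstacle here: once the unramified hypothesis is in place, the Hurwitz formula does all the work, and the only point requiring a word of care is ruling out $g(\cX/G)\le 1$, which follows at once from the positivity of $2g(\cX)-2$. The argument uses neither the cyclicity of $G$ nor the hypothesis $p\neq 2$; it holds for any finite $G\le\aut(\cX)$ with $|G|\ge 2g(\cX)+1$ and $g(\cX)\ge 2$.
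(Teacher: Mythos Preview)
Your proof is correct and follows essentially the same approach as the paper: both assume unramified, invoke the Hurwitz formula to get $2g(\cX)-2=N(2g(\cX/G)-2)$, and derive a contradiction by case analysis on $g(\cX/G)$. The only cosmetic difference is that you dispose of $g(\cX/G)\le 1$ via the sign of $2g(\cX)-2$ before handling $g(\cX/G)\ge 2$, whereas the paper treats the three cases $g_0=0,1,\ge 2$ separately.
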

\begin{proof}
Suppose by contradiction that the extension is unramified. Then, by the Riemann-Hurwitz formula applied to $G$ we have
$$2g(\cX)-2=N(2g_0-2),$$
where $g_0$ is the genus of $\mathcal{X}/G$.
If $g_0>1$ then $2g(\cX)-2\geq 2N$ that is $N\leq g(\cX)-1$ against $N\geq 2g(\cX)+1$; if $g_0=1$ then $2g(\cX)-2=0$ that is $g(\cX)=1$ against $g(\cX)\geq 2$; if $g_0=0$ then $2g(\cX)-2=-2N$, again a contradiction.
\end{proof}

\begin{prop}\label{divisibilità} Let $d$ be a divisor of $N$ and $H$ be the subgroup of $G$ of order $d$. Denote by $\mathcal{Y}$ and $\mathcal{Z}$ the quotient curves $\mathcal{X}/H$ and $\mathcal{X}/G$, respectively. For a point $P\in\mathcal{X}$, let $Q\in \mathcal{Y}$ and $R\in \mathcal{Z}$ be the points lying under $P$ with respect to the coverings $\mathcal{X}\to\mathcal{Y}$ and $\mathcal{X}\to\mathcal{Z}$. Then, $R$ is a branch point for the covering $\mathcal{Y}\to\mathcal{Z}$ if and only if $|G_P|\nmid d$.
\end{prop}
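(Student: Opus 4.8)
The plan is to reduce the statement to elementary group theory by exploiting that $G=\langle\sigma\rangle$ is cyclic. Since $d\mid N$, the subgroup $H$ of order $d$ is the \emph{unique} subgroup of $G$ of that order, hence normal in $G$; so $\mathcal{X}\to\mathcal{Y}=\mathcal{X}/H$ is Galois with group $H$, and the induced covering $\mathcal{Y}\to\mathcal{Z}=\mathcal{X}/G$ is Galois with group $G/H$, cyclic of order $N/d$. Under the composite covering $\mathcal{X}\to\mathcal{Z}$ the point $R$ lies below $P$, and it factors through $Q$, so $R$ is exactly the point of $\mathcal{Z}$ below $Q$. Moreover the fibre of $\mathcal{Y}\to\mathcal{Z}$ over $R$ is a single $(G/H)$-orbit, and since $G/H$ is abelian all points of that fibre have the same stabilizer in $G/H$. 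Consequently $R$ is a branch point of $\mathcal{Y}\to\mathcal{Z}$ if and only if $(G/H)_Q\neq\{1\}$.

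Next I would identify $(G/H)_Q$ explicitly. A coset $gH$ fixes $Q$ precisely when $g(P)$ and $P$ lie in the same $H$-orbit, i.e.\ when $h^{-1}g\in G_P$ for some $h\in H$, i.e.\ when $g\in HG_P$. As $H$ is normal, $HG_P=G_PH$ is a subgroup of $G$, so $(G/H)_Q=G_PH/H\cong G_P/(G_P\cap H)$. Hence $Q$ is a ramification point of $\mathcal{Y}\to\mathcal{Z}$ if and only if $G_P\not\subseteq H$. (Equivalently, one may simply invoke the transitivity of decomposition groups in a tower of Galois covers, see \cite{Stichtenoth} or \cite[Chapter~11]{HKT}.)

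Finally I would use the subgroup lattice of a cyclic group: $G_P$ is cyclic and coincides with the unique subgroup of $G$ of order $|G_P|$, while $H$ is the unique subgroup of order $d$; two such subgroups satisfy $G_P\subseteq H$ if and only if $|G_P|$ divides $d$. Chaining the three equivalences gives that $R$ is a branch point of $\mathcal{Y}\to\mathcal{Z}$ $\iff$ $(G/H)_Q\neq\{1\}$ $\iff$ $G_P\not\subseteq H$ $\iff$ $|G_P|\nmid d$, which is the claim (the case $|G_P|=1$ is included, since $1\mid d$ always and then $R$ is not a branch point).

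The only step that is more than bookkeeping is the identification $(G/H)_Q=G_PH/H$; this is where some care is needed, but the short orbit-counting argument above settles it, and everything else is immediate from the cyclicity of $G$ and the resulting uniqueness and nesting of its subgroups.
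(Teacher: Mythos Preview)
Your proof is correct and follows essentially the same route as the paper's. The paper argues by comparing $H_P$ with $G_P$ directly (using that the ramification index of $Q$ over $R$ equals $|G_P|/|H_P|$), whereas you compute the stabilizer $(G/H)_Q=G_PH/H$; but both reductions land on the same key observation, namely that in the cyclic group $G$ one has $G_P\subseteq H$ if and only if $|G_P|\mid d$.
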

\begin{proof}
Assume that $e_P=|G_P| \mid d$. Then, $H$ contains a unique subgroup of order $e_P$, and this subgroup is $G_P$. Therefore, $G_P=H_P$ must hold, and hence the covering $\mathcal{Y}\to\mathcal{Z}$ is not ramified over $R$. 
On the other hand, if $e_P\nmid d$, $H$ has no subgroups of order $e_P$, thus $G_P\not\subseteq H$. Therefore, $H_P$ is properly contained in $G_P$, and so $\mathcal{Y}\to\mathcal{Z}$ ramifies over $R$.
\end{proof}

\section{Proof of Theorem \ref{MAIN} when $p$ divides $N$}\label{noncoprimo}

We keep the notation of the last part Section \ref{background}. That is,
$\cX$ is a non-singular algebraic curve of genus $g(\cX)\geq 2$ defined over an algebraically closed field $\mathbb{K}$ of characteristic $p\neq 2$. Also,  $\sigma$ is an automorphism of $\mathcal{X}$ of order $N$ and $G=\langle\sigma\rangle$  is the subgroup of $\aut(\cX)$ generated by $\sigma$. 

We assume that $N\ge 2g(\mathcal X)+1$ and that $p$ divides $N$. Our goal in this Section is to prove that either (II) or (III) of Theorem \ref{MAIN} holds. To this end, we consider the cases $N=p^h$ and $N=p^hm$ with $(p,m)=1$, $m>1$, in two different subsections.

\subsection{The case $N=p^h$}

First, we show that that $h\le 2$ holds under the weaker condition $N\ge 2g(\cX)-1$.
\begin{prop}\label{10mar}
    If $N=p^h$ with $p\ge 3$ and $N\ge 2g(\cX)-1$, then  $h\le 2$. 
\end{prop}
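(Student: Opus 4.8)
The plan is to combine the Hurwitz genus formula \eqref{Hurwitz} with two structural facts: the subgroups of the cyclic $p$-group $G$ form a single chain, and the Hasse--Arf theorem forces the ramification breaks of the abelian cover $\cX\to\cX/G$ to be widely spaced. Write $N=p^h$ and $g_0=g(\cX/G)$, and suppose $h\ge 3$ towards a contradiction.

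First I would show that the cover $\cX\to\cX/G$ ramifies and that $g_0=0$. An unramified cover would give, via \eqref{Hurwitz}, $2g(\cX)-2=N(2g_0-2)$, impossible since the right-hand side is $-2N<0$ for $g_0=0$, is $0$ (forcing $g(\cX)=1$) for $g_0=1$, and is $\ge 2N\ge 4g(\cX)-2$ for $g_0\ge 2$. So a ramification point exists; let $P$ be one whose stabilizer has maximal order $|G_P|=p^m$. As $G$ is abelian, every point of the $G$-orbit of $P$ shares the stabilizer $G_P$ and the different exponent $d_P\ge 2(p^m-1)$, so that orbit alone contributes at least $2(N-p^{h-m})$ to $\sum_P d_P$. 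If $g_0\ge 1$ then $\sum_P d_P=2g(\cX)-2-N(2g_0-2)\le 2g(\cX)-2\le N-1$, whence $2(N-p^{h-m})\le N-1$, i.e. $N\le 2p^{h-m}-1<N$ (using $p^{h-m}\le N/p$); a contradiction. Hence $g_0=0$, so $\sum_P d_P=2g(\cX)-2+2N\le 3N-1$. I would then force $G$ to fix a point: if $m\le h-1$, the unique subgroup $V=\langle\sigma^{p^{h-m}}\rangle$ of order $p^m$ contains every $G_P$ (the subgroups of $G$ form a chain), so $G/V$ acts on $\cX/V$ without fixed points, and the induced cover $\cX/V\to\cX/G$ is unramified of degree $p^{h-m}\ge p$; then \eqref{Hurwitz} gives $2g(\cX/V)-2=-2p^{h-m}<0$, impossible. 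Therefore $G_{P_0}=G$ for some $P_0$.

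Finally I would estimate the different at $P_0$. Since $G$ is cyclic while each $G_{P_0}^{(i)}/G_{P_0}^{(i+1)}$ is elementary abelian (Theorem \ref{resultstabilizz}), the ramification filtration passes through the whole chain $G\supsetneq\langle\sigma^p\rangle\supsetneq\cdots\supsetneq\{1\}$, dropping one step at each break $1\le\beta_1<\beta_2<\cdots<\beta_h$; and Hasse--Arf (valid as $G$ is abelian; see \cite{Serre}) makes the upper-numbering breaks integral, which gives $p^j\mid(\beta_{j+1}-\beta_j)$, hence $\beta_{j+1}-\beta_j\ge p^j$. Grouping \eqref{differente} by the value of $|G_{P_0}^{(i)}|$,
\begin{equation*}
d_{P_0}=(\beta_1+1)(p^h-1)+\sum_{j=1}^{h-1}(\beta_{j+1}-\beta_j)(p^{h-j}-1)\ \ge\ 2(p^h-1)+\sum_{j=1}^{h-1}p^j(p^{h-j}-1),
\end{equation*}
and a short computation turns the right-hand side into $(h+1)p^h-2-\tfrac{p^h-p}{p-1}$. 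Together with $d_{P_0}\le\sum_P d_P\le 3p^h-1$ this yields $(h-2)p^h\le\tfrac{p^h-1}{p-1}$, which is false for $h\ge 3$ and $p\ge 3$ since the left-hand side is at least $p^h$. Hence $h\le 2$.

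I expect the delicate point to be the middle step — using the chain structure of $G$, together with the exclusion of $g_0\ge 1$, to guarantee a totally ramified point — since only then can Hasse--Arf be applied at a single place; the inequalities in the first and last steps are routine once that is in place.
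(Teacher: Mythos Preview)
Your proof is correct and follows the same two-step skeleton as the paper's --- first force $G$ to fix a point, then bound the different there via the ramification filtration --- but the implementations differ in both steps. For the fixed point, the paper invokes $p$-rank machinery: it quotes \cite[Theorem 11.84]{HKT} and \cite[Proposition 3.3]{Large3} to get $\gamma(\cX)=0$, then \cite[Lemma 11.129, Remark 11.128]{HKT} to conclude that $G$ fixes one point and is semiregular elsewhere. Your argument is more elementary and self-contained: you use only Riemann--Hurwitz and the fact that the subgroups of a cyclic $p$-group form a chain (so an unramified intermediate cover of the projective line would have negative genus). This buys you independence from the $p$-rank results, and you never need semiregularity away from $P_0$ since the crude bound $d_{P_0}\le\sum_P d_P\le 3N-1$ already suffices. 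For the filtration estimate, the paper uses the weaker congruence $\beta_{j+1}\equiv\beta_j\pmod p$ from \cite[Lemma 11.75(v)]{HKT}, giving gaps $\ge p$; you invoke the full Hasse--Arf theorem to get gaps $\ge p^j$. Your bound is sharper but the paper's weaker one is already enough, so here the paper is slightly more economical. Overall your route trades the cited $p$-rank results for Hasse--Arf and a clean chain-of-subgroups trick; both are short, and yours is arguably the more transparent of the two.
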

    \begin{proof}
    If $p>3$ then the $p$-rank $\gamma(\cX)$ of $\mathcal X$ is equal to zero by \cite[Theorem 11.84]{HKT}. Actually, we may assume $\gamma(\cX)=0$ also for $p=3$. In fact, by  \cite[Proposition 3.3]{Large3}, if $\gamma(\cX)>0$ and $3^h>2g(\cX)-2$ then  $h=1$.         
    
    Therefore, by \cite[Lemma 11.129]{HKT} and \cite[Remark 11.128]{HKT} $G$ fixes one point $P_0\in \mathcal X$ and acts semiregularly  on the other points of the curve. By the Riemann-Hurwitz formula applied to $G$,
$$
2g(\cX)-2=p^h(2g_0-2)+\sum_{i=0}^{\infty} (|G_{P_0}^{(i)}|-1),$$
where $g_0$ is the genus of $\cX/G$.
Since $G=G_{P_0}^{(0)}=G_{P_0}^{(1)}$ is cyclic, by (iii) of \cite[Theorem 11.74]{HKT} we have that the factor group $G_{P_0}^{(i)}/G_{P_0}^{(i+1)}$
is either trivial or a cyclic group of order $p$. Also, by (v) of \cite[Lemma 11.75]{HKT} the integers $k$ for which $G_{P_0}^{(k)}/G_{P_0}^{(k+1)}$ is non-trivial are all congruent modulo $p$.

Assume that $h\ge 3$. Then
$$
\sum_{i=0}^{\infty} (|G_{P_0}^{(i)}|-1)
$$
is at least
$$
2(p^h-1)+p(p^{h-1}-1)+p(p^{h-2}-1)+\ldots+p(p-1)
$$
that is
$$
2p^h-2+p^h+p^{h-1}+\ldots+p^2-(h-1)p.
$$
As $p^h\ge 2g(\cX)-1$
$$
2g(\cX)-2\ge p^h+p^{h-1}+\ldots+p^2-(h-1)p-2\ge 2g(\cX)-1+p^{h-1}+\ldots+p^2-(h-1)p-2
$$
which implies
$$
p^{h-1}+\ldots+p^2-(h-1)p-1\le 0,
$$
a contradiction.

    \end{proof}
Now,  the case $N=p^2$ is considered. We are going to show that it is not possible that $p^2\ge 2g(\cX)+1$. Our first step is to rule out all possibilities but  $p^2=2g(\cX)+p$.
\begin{prop}\label{13mar}
    If $N=p^2$ with $p\ge 3$ and $N\ge 2g(\cX)+1$, then $p^2=2g(\cX)+p$. Also, there is a unique short orbit $\Omega=\{P_0\}$ of $\mathcal X$ under the action of $G$, and
$$
|G_{P_0}^{(0)}|=|G_{P_0}^{(1)}|=p^2,\qquad |G_{P_0}^{(2)}|=\ldots=|G_{P_0}^{(p+1)}|=p,\qquad |G_{P_0}^{(p+2)}|=1.
$$
Moreover, the quotient curve $\mathcal X/G_{P_0}^{(2)}$ is rational.
 \end{prop}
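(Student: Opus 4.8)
The plan is to mimic the argument of Proposition \ref{10mar}: first reduce to a single ramification point, then pin down the lower ramification filtration there via the Hasse--Arf congruence, and finally read off the numerical conclusions from the Hurwitz genus formula \eqref{Hurwitz}. To begin with, I would check that $\gamma(\cX)=0$: for $p>3$ this is \cite[Theorem 11.84]{HKT}, and for $p=3$ it follows since \cite[Proposition 3.3]{Large3} forces $h=1$ whenever $\gamma(\cX)>0$ and $3^{h}>2g(\cX)-2$, whereas here $h=2$ and $p^{2}\geq 2g(\cX)+1$ gives $g(\cX)\leq 4$. Then \cite[Lemma 11.129]{HKT} together with \cite[Remark 11.128]{HKT} yields that $G$ fixes a unique point $P_{0}$ and acts semiregularly on $\cX\setminus\{P_{0}\}$, so $\Omega=\{P_{0}\}$ is the only short orbit, $G_{P_{0}}=G$, and $P_{0}$ is the only ramification point of $G$.

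Next I would describe the chain $G=G_{P_{0}}^{(0)}=G_{P_{0}}^{(1)}\supseteq G_{P_{0}}^{(2)}\supseteq\cdots$. Since $G$ is cyclic of order $p^{2}$, its only subgroups are $1$, the unique subgroup $H$ of order $p$, and $G$; and because every quotient $G_{P_{0}}^{(i)}/G_{P_{0}}^{(i+1)}$ is elementary abelian by part (ii) of Theorem \ref{resultstabilizz}, the chain cannot drop from $G$ directly to $1$, so it must be $G\to H\to 1$. Writing $\lambda_{1}$ and $\lambda_{2}$ for the largest indices with $G_{P_{0}}^{(\lambda_{1})}=G$ and $G_{P_{0}}^{(\lambda_{2})}=H$, we have $\lambda_{1}\geq 1$ (because $G_{P_{0}}^{(1)}=G$), $\lambda_{1}<\lambda_{2}$, and by (v) of \cite[Lemma 11.75]{HKT} the two jumps are congruent modulo $p$, so $\lambda_{2}=\lambda_{1}+pk$ for some integer $k\geq 1$. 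Consequently $d_{P_{0}}=(\lambda_{1}+1)(p^{2}-1)+pk(p-1)$ in \eqref{differente}.

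Substituting this into \eqref{Hurwitz} for $G$ and using $g(\cX)\leq(p^{2}-1)/2$, I would first exclude $g(\cX/G)\geq 1$: then the right-hand side is at least $d_{P_{0}}\geq 2(p^{2}-1)+p(p-1)$, which exceeds $2g(\cX)-2\leq p^{2}-3$, a contradiction. Hence $g(\cX/G)=0$ and \eqref{Hurwitz} reduces to $2g(\cX)=(\lambda_{1}-1)(p^{2}-1)+pk(p-1)$. Since $pk(p-1)\geq p^{2}-p$, the bound $2g(\cX)\leq p^{2}-1$ forces $(\lambda_{1}-1)(p^{2}-1)\leq p-1$, hence $\lambda_{1}=1$; then $pk(p-1)\leq p^{2}-1$ forces $k=1$ (for $k\geq 2$ would give $(p-1)^{2}\leq 0$). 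This gives $\lambda_{2}=p+1$, the asserted shape of the filtration, and $2g(\cX)=p(p-1)$, i.e. $p^{2}=2g(\cX)+p$.

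For the rationality of $\cX/H$ with $H=G_{P_{0}}^{(2)}$, I would apply \eqref{Hurwitz} to $H$: from $H_{P_{0}}^{(i)}=H\cap G_{P_{0}}^{(i)}$ we get $H_{P_{0}}^{(i)}=H$ for $0\leq i\leq p+1$ and $H_{P_{0}}^{(i)}=1$ otherwise, so the different of $\cX\to\cX/H$ at $P_{0}$ has degree $(p+2)(p-1)$, and $P_{0}$ is its only ramification point; combining $2g(\cX)-2=p\bigl(2g(\cX/H)-2\bigr)+(p+2)(p-1)$ with $2g(\cX)=p(p-1)$ yields $g(\cX/H)=0$. The step I expect to be the main obstacle is the bookkeeping of the lower ramification filtration of the cyclic degree-$p^{2}$ cover $\cX\to\cX/G$: establishing that there are exactly two jumps, that they are congruent modulo $p$, that the chain cannot collapse from $G$ straight to $1$, and then carrying the contribution of the different correctly through each application of the Hurwitz formula. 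Conceptually the argument does not go beyond Proposition \ref{10mar}; the content lies in turning the Hurwitz inequalities into exact equalities.
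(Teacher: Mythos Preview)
Your proof is correct and follows essentially the same approach as the paper: zero $p$-rank gives a unique fixed point, the cyclic-$p^{2}$ filtration must pass through the order-$p$ subgroup with jumps congruent modulo $p$, and Hurwitz bounds pin down the filtration exactly. The only cosmetic difference is the order of the two Hurwitz applications---the paper first applies it to the order-$p$ subgroup $T$ (obtaining $g(\cX/T)=0$ and $p+1\ge wp+v-2$, then eliminates $v=3$ via $G$), whereas you apply it to $G$ first to force $\lambda_{1}=1$, $k=1$ directly and only afterwards to $H$ for rationality; your route is marginally more direct but not substantively different.
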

\begin{proof} By the proof of Proposition \ref{10mar},
$G$ fixes one point $P_0\in \mathcal X$ and acts semiregularly  on the other points of the curve. Also, there exist two positive integers $v\geq 2$ and $w$ such that $|G_{P_0}^{(v-1)}|=p^2$ and
$$
p=|G_{P_0}^{(v)}|=|G_{P_0}^{(v+1)}|=\ldots=|G_{P_0}^{(wp+v-1)}|\neq |G_{P_0}^{(wp+v)}|=1.
$$

Let $T=G_{P_0}^{(v)}$. Clearly $T_{P_0}^{(i)}=T$ for each $i\le wp+v-1$. Write the Riemann-Hurwitz genus formula for $T$: 
$$
2g(\cX)-2=p(2g(\mathcal X/T)-2)+(wp+v)(p-1).
$$
Then
$$
p^2-3\ge 2g(\cX)-2= p(2g(\mathcal X/T)-2)+(wp+v)(p-1)
$$
implies $g(\mathcal X/T)=0$, and hence
$$
p^2-3\ge 2g(\cX)-2=-2p+(wp+v)(p-1),
$$
which can be read
$$
p^2-1\ge 2(-p+1)+(wp+v)(p-1),
$$
and hence
$$
p+1\ge wp+v-2.
$$

Then $w=1$ and either $v=2$ or $v=3$.
Actually, the latter case cannot occur.
Indeed, the Riemann-Hurwitz formula for $G$ gives
$$
p^2-3\ge 2g(\cX)-2= -2p^2+3(p^2-1)+p(p-1)
$$
and hence $p(p-1)\le 0$, a contradiction.
\end{proof}

\begin{thm}\label{24mar} If $N=p^h\ge 2g(\cX)+1$, then $h=1$ and $\mathcal X:y^2=x^p-x$.
\end{thm}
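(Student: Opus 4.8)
The plan is to first rule out $h=2$ and then handle $N=p$ directly.

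\emph{Eliminating $h=2$.} Suppose $N=p^{2}$. By Proposition~\ref{13mar} we have $p^{2}=2g(\cX)+p$, the group $G$ has the single short orbit $\{P_{0}\}$ with the stated ramification filtration, and, putting $T=G_{P_{0}}^{(2)}$ (cyclic of order $p$), the quotient $\mathcal X/T$ is rational; write $\mathbb K(\cX)^{T}=\mathbb K(x)$. Since $\sigma^{p}$ generates $T$ while $|\sigma|=p^{2}$, the automorphism $\bar\sigma$ induced by $\sigma$ on $\mathcal X/T\cong\mathbb P^{1}$ has order exactly $p$, hence is unipotent with a unique fixed point, which must be the image $\bar P_{0}$ of the $G$-fixed point $P_{0}$; after normalizing $x$ we may assume $\bar P_{0}$ is the place $x=\infty$ and $\sigma$ acts on $\mathbb K(x)$ by $x\mapsto x+1$. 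The Galois extension $\mathbb K(\cX)\mid\mathbb K(x)$ has group $T$ and, since $T$ inherits semiregularity off $P_{0}$ from $G$, it ramifies only over $x=\infty$; thus $\mathbb K(\cX)=\mathbb K(x,y)$ with $y^{p}-y=f(x)$ for some $f\in\mathbb K[x]$ which in reduced form satisfies $\gcd(\deg f,p)=1$, and the filtration of Proposition~\ref{13mar} forces the different exponent of $T$ at $P_{0}$ to be $(p+2)(p-1)$, hence $\deg f=p+1$.

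\emph{The contradiction for $h=2$.} Because $G$ is abelian, $\sigma$ commutes with a generator $y\mapsto y+a$ ($a\in\mathbb F_{p}^{*}$) of $T$, and expressing $\sigma(y)$ in the $\mathbb K(x)$-basis $1,y,\ldots,y^{p-1}$ and comparing coefficients this forces $\sigma(y)=y+g(x)$ for some $g\in\mathbb K(x)$. Applying $\sigma$ to $y^{p}-y=f(x)$ gives $f(x+1)-f(x)=g(x)^{p}-g(x)$. Since $(x+1)^{p+1}-x^{p+1}=x^{p}+x+1$ in characteristic $p$, the left-hand side is a polynomial of degree exactly $p$ with leading coefficient equal to that of $f$, which is nonzero; this forces $g$ to be a polynomial of degree $1$, say $g=\alpha x+\beta$ with $\alpha\neq 0$. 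An easy induction yields $\sigma^{n}(y)=y+n\,g(x)+\binom{n}{2}\alpha$, so $\sigma^{p}(y)=y+\binom{p}{2}\alpha=y$ because $p\mid\binom{p}{2}$ for odd $p$; together with $\sigma^{p}(x)=x$ this gives $\sigma^{p}=\mathrm{id}$, contradicting $|\sigma|=p^{2}$. Hence $h=1$.

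\emph{The case $N=p$.} Now $p=N\ge 2g(\cX)+1\ge 5$. As $|G|=p$ is prime, every ramification point is $\sigma$-fixed, and by Proposition~\ref{n=0} there is at least one such point, each contributing at least $2(p-1)$ to $\sum_{P}d_{P}$ in \eqref{Hurwitz}. A short computation with \eqref{Hurwitz} for $G$ then shows that if $g(\mathcal X/G)\ge 1$, or if $G$ had a second fixed point, then $g(\cX)\ge p-1$, incompatible with $p\ge 2g(\cX)+1$. So $\mathcal X/G$ is rational and $G$ has exactly one branch point; placing it at $x=\infty$ gives $\mathbb K(\cX)=\mathbb K(x,y)$ with $y^{p}-y=f(x)$, $f\in\mathbb K[x]$, $\gcd(\deg f,p)=1$, and \eqref{Hurwitz} reads $2g(\cX)=(\deg f-1)(p-1)$. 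Then $p\ge 2g(\cX)+1$ forces $\deg f\le 2$, and $\deg f=1$ is excluded since it gives $g(\cX)=0$; hence $\deg f=2$ and $g(\cX)=(p-1)/2$. Writing $f=ax^{2}+bx+c$ with $a\neq 0$, absorbing $c$ into $y$ (using that $\mathbb K$ is algebraically closed), completing the square in $x$, and rescaling $x$, we reduce to $y^{p}-y=x^{2}$; rewriting this relation as $x^{2}=y^{p}-y$ and renaming the variables exhibits $\cX$ as $y^{2}=x^{p}-x$, as claimed.

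\emph{Main obstacle.} The delicate step is ruling out $h=2$: the ramification data of Proposition~\ref{13mar} is consistent with the Hasse--Arf theorem, so no purely ramification-theoretic obstruction is available; one must pass to the explicit Artin--Schreier model and exploit the specific shape of the $\mathbb Z/p^{2}$-action, with the contradiction ultimately resting on the congruence $p\mid\binom{p}{2}$ valid for odd $p$.
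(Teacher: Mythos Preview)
Your argument is correct, modulo one small omission: you never explicitly invoke Proposition~\ref{10mar} to reduce to $h\le 2$, so as written the proof does not cover $h\ge 3$. Since your stated plan (``rule out $h=2$ and then handle $N=p$'') clearly presupposes $h\le 2$, this is only a missing citation, not a real gap.

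Your route differs from the paper's in both parts. For $h=2$, both approaches first reach the Artin--Schreier model $y^{p}-y=B(x)$ with $\deg B=p+1$. The paper then observes that this is a nonsingular plane curve of degree $p+1$, so $G$ embeds in $\mathrm{PGL}(3,\mathbb K)$ fixing the point at infinity and the ideal line, and quotes a matrix computation (from \cite{ABFG}) showing that unipotent upper-triangular matrices in $\mathrm{PGL}(3,\mathbb K)$ satisfy $A^{p}=I$ for $p>2$, whence no element of order $p^{2}$. You instead stay in the Artin--Schreier model: using commutativity with $T$ you get $\sigma(y)=y+g(x)$ with $g$ linear, and the identity $\sigma^{n}(y)=y+ng(x)+\binom{n}{2}\alpha$ together with $p\mid\binom{p}{2}$ for odd $p$ kills $\sigma^{p}$. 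This is more self-contained (no plane-curve smoothness, no external matrix lemma) and makes transparent exactly where $p\neq 2$ is used; the paper's approach is shorter once one accepts the cited facts, and has the conceptual advantage of identifying the obstruction as a property of $p$-elements in $\mathrm{PGL}(3,\mathbb K)$. For $h=1$ the paper simply cites Homma \cite{Homma}; your direct Hurwitz computation and normalization to $y^{p}-y=x^{2}$ (equivalently $y^{2}=x^{p}-x$) is a clean, self-contained replacement.
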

\begin{proof}
By Proposition \ref{10mar}, $h\le 2$ holds.
Assume by contradiction that $h=2$. By Proposition \ref{13mar},
the subgroup of $G$ of order $p$ fixes a point $P_0$, coincides with $G_{P_0}^{(2)}$, and acts semiregularly on the other points of $\mathcal X$. Also, the quotient curve $\mathcal X/G_{P_0}^{(2)}$ is rational.

By \cite[Theorem 12.5]{HKT}, an equation for $\mathcal X$ is 
$$
y^p-y=B(x)
$$
with $B$ a polynomial of degree $d$ coprime to $p$. 
By \cite[Lemma 12.1(b)]{HKT} the genus of $\mathcal X$ is $(p-1)(d-1)/2$.
Then, by Proposition \ref{13mar}, $d=p+1$ holds. 

This means that $\mathcal X$ is a non-singular plane curve and therefore the group $G$ consists of linear transformations fixing the points of $\mathcal X$; see \cite[Theorem 11.29]{HKT}. Clearly $G$ must fix the point at infinity of the $Y$-axis and the tangent line of $\mathcal X$ at that point, namely the ideal line.
It was noticed in \cite{ABFG}
that  for a $p$-element in the projective linear group $\mathrm{PGL}(3,\mathbb K)$ represented by a matrix
$$
A_{a,b,c}=\begin{pmatrix}
1 & 0 & 0 \\
b & 1 & 0\\
c & a & 1
\end{pmatrix},
$$
with $a,b,c \in \mathbb K$, the equation 
$$
A^p=A_{pa,pb,p\frac{p-1}{2}ab+pc}=I_3\,\, ,
$$
holds for $p>2$. Then, there cannot exist an element of $G$ of order $p^2$.
Thus $h=1$ and by \cite{Homma} an equation of $\mathcal X$ is $y^2=x^p-x$.
\end{proof}

\begin{rem} In \cite{MR} it is shown that $2g(\mathcal X)p/(p-1)$
is an upper bound for the size of an {\em abelian} $p$-group of automorphisms of a curve $\mathcal X$. 
By the result of the present section, the bound can be improved in the cyclic case whenever $g(\mathcal X)>\frac{p-1}{2}$.
\end{rem}

\subsection{The case $N=p^hm$ with $m>1$ and $(p,m)=1$}

    Let $M$ be the subgroup of $G$ of order $m$ and let $S$ be the subgroup of $G$ of order $p^h$. Also, let $H$ be the factor group $G/M$. Then $H$ is a cyclic group of order $p^h$ acting on the quotient curve $\mathcal X/M$.

\begin{prop}\label{5apr} 
Assume that $N=p^hm\ge 2g(\cX)+1$ with $p\ge 3$, $m>1$, and $(p,m)=1$. 
If $\mathcal X/M$ is elliptic, then $p=3$, $h=1$, the $p$-rank of $\mathcal X/M$ is zero, and $H$ does not act semiregularly on $\mathcal X/M$.
\end{prop}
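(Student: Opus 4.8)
The plan is to analyze the action of $H \cong G/M$ on the elliptic curve $\mathcal{E} = \mathcal{X}/M$, exploiting that $H$ is cyclic of order $p^h$ and that the covering $\mathcal{X} \to \mathcal{X}/M$ has small genus drop (since $M$ is tame of order $m$). First I would record what the Hurwitz formula for $M$ gives: since $M$ is a $p'$-group acting on $\mathcal{X}$ with quotient of genus $1$, we get $2g(\mathcal{X}) - 2 = m(2\cdot 1 - 2) + \sum_{P}(e_P - 1) \cdot(\text{something})$, more precisely $2g(\mathcal{X}) - 2 = \sum_P (e_P - 1)$ over ramification points of $M$; this already forces $2g(\mathcal{X}) - 2 \geq 0$, and combined with $N = p^h m \geq 2g(\mathcal{X}) + 1$ it bounds $p^h m$ in terms of the ramification data of $M$ on $\mathcal{X}$. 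The key point is that this makes $p^h$ comparatively large relative to the genus of $\mathcal{E}$, which is only $1$.

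**Applying the elliptic-curve structure theorems.**

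Next I would invoke Theorem~\ref{autelliptic} and Proposition~\ref{ellittica}(a) for the cyclic $p$-group $H$ acting on $\mathcal{E}$. If $H$ acted semiregularly on $\mathcal{E}$, then the quotient $\mathcal{E}/H$ would be elliptic and the covering $\mathcal{E} \to \mathcal{E}/H$ unramified of degree $p^h$; then $\mathcal{X} \to \mathcal{X}/G$ would factor as $\mathcal{X} \to \mathcal{E} \to \mathcal{E}/H$ with the second map unramified, and I would push the Hurwitz computation through to contradict $N \geq 2g(\mathcal{X})+1$ — essentially the same bookkeeping as in Proposition~\ref{n=0}, since an unramified piece of degree $p^h$ in the tower is too expensive. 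So $H$ does not act semiregularly on $\mathcal{E}$. By Proposition~\ref{ellittica}(a), a generator $\alpha$ of $H$ of order $p^h$ that fails to act semiregularly forces $\mathcal{E}$ non-ordinary, $p^h = 3$, hence $p = 3$ and $h = 1$; and a non-ordinary elliptic curve has $p$-rank zero, which is exactly the remaining assertion about the $p$-rank of $\mathcal{X}/M$.

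**The main obstacle.**

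The step I expect to be the real obstacle is ruling out the semiregular case cleanly, i.e. showing that if $H$ is semiregular on $\mathcal{E}$ then $N \geq 2g(\mathcal{X})+1$ fails. One must be careful: $M$ need not be semiregular on $\mathcal{X}$, so the genus of $\mathcal{X}$ can be significantly larger than $1$, and I need $N = p^h m$ to outgrow $2g(\mathcal{X})$. The clean way is to work with the full group $G = \langle \sigma \rangle$: let $\bar\Phi: \mathcal{E} \to \mathcal{E}/H = \mathcal{X}/G$ be unramified; then every ramification point of $G$ on $\mathcal{X}$ lies over a ramification point of $M$ on $\mathcal{X}$, and the stabilizer $G_P$ equals $M_P$ (a $p'$-group), so $G$ is tame. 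Writing the Hurwitz formula $2g(\mathcal{X}) - 2 = N(2g_0 - 2) + \sum_P(e_P - 1)$ with $g_0 = g(\mathcal{E}/H) = 1$, we get $2g(\mathcal{X}) - 2 = \sum_{P}(e_P - 1)$, and since each $e_P \mid m$ we can bound the sum: grouping ramification points into $G$-orbits of size $N/e_P$, each short orbit contributes $(N/e_P)(e_P - 1) = N(1 - 1/e_P) \geq N/2$ when $e_P \geq 2$, so $2g(\mathcal{X}) - 2 \geq (N/2)\cdot(\text{number of short orbits}) \geq N/2$, whence $N \leq 4g(\mathcal{X}) - 4 < 2g(\mathcal{X})+1$ once $g(\mathcal{X}) \geq 2$ — wait, that only gives $N \le 4g-4$, not a contradiction. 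The fix is that with $g_0=1$ one in fact needs at least one short orbit only if ramified; but here the subtlety is whether $G$ even has short orbits. If $G$ is unramified on $\mathcal{X}$ then Proposition~\ref{n=0} already gives a contradiction. If $G$ has short orbits, then $2g(\mathcal X)-2 = \sum_P (e_P-1)$ with the sum over genuinely ramified points, and one has to extract more from the fact that $e_P \le m = N/p^h \le N/3$: each short orbit then contributes at least $N(1 - 1/e_P)$ but also the orbit has size $N/e_P \ge p^h \ge 3$, and a finer count (or invoking that $M$ itself must satisfy $m \le 4g(\mathcal X)+2$ type bounds on $\mathcal X$, via the structure of $\mathcal X \to \mathcal E$) closes the gap. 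I would organize this final contradiction carefully, likely by a direct estimate combining $2g(\mathcal X)-2=\sum_P(e_P-1)$ with $N/p^h \ge e_P$ and $p^h \ge 3$ to force $N < 2g(\mathcal X)+1$.
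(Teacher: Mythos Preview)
Your overall strategy matches the paper's: reduce to showing $H$ does not act semiregularly on $\mathcal{E}=\mathcal{X}/M$, then invoke Proposition~\ref{ellittica}(a) to get $p=3$, $h=1$, $p$-rank zero. You also correctly observe that if $H$ is semiregular then $g(\mathcal{X}/G)=1$, that $G$ is tame on $\mathcal{X}$, and that the Hurwitz formula reads $2g(\mathcal{X})-2=\sum_{i=1}^r (N-\ell_i)$ with each summand at least $N/2$. The cases $r=0$ (Proposition~\ref{n=0}) and $r\ge 2$ (giving $2g(\mathcal{X})-2\ge N$) are fine.

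The genuine gap is at $r=1$, and the ``direct estimate'' you propose cannot close it. With a single short orbit of size $\ell_1=p^h m'$ (where $m'\mid m$, $m'<m$), the Hurwitz formula gives exactly $2g(\mathcal{X})-2=p^h(m-m')$, and the constraint $N\ge 2g(\mathcal{X})+1$ becomes $p^h m\ge p^h(m-m')+3$, i.e.\ $p^h m'\ge 3$, which is automatic since $p\ge 3$. Concretely, $p=5$, $m=7$, $m'=1$ gives $g(\mathcal{X})=16$ and $N=35\ge 33=2g(\mathcal{X})+1$, so no numerical inequality of the type you sketch can yield a contradiction. The bound $m\le 4g(\mathcal{X})+4$ for abelian groups is likewise far too weak here.

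The paper's resolution of the $r=1$ case is \emph{not} a finer estimate but a structural obstruction: pass to the intermediate quotient $\mathcal{X}/S$ and observe that $\mathcal{X}/S\to\mathcal{X}/G$ is a cyclic cover of degree $m$ with $(m,p)=1$, hence a Kummer extension. Since $G$ has only one short orbit on $\mathcal{X}$ and no $p$-element fixes a point, this Kummer extension has exactly one branch point, contradicting Proposition~\ref{n=1}. This is the missing idea.
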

\begin{proof}
By Proposition \ref{ellittica} it is enough to show that $\mathcal X/M$  is not ordinary.
Assume by contradiction that the $p$-rank of $\mathcal X/M$ is equal to 1. Then by Proposition \ref{ellittica}, $H$ acts semiregularly on $\mathcal X/M$. Therefore, also $\mathcal X/G$ is elliptic and the Riemann-Hurwitz genus formula applied to $G$ gives
$$
2g(\cX)-2=\sum_{i=1}^r(N-\ell_i),
$$
where $r$ is the number of short orbits under $G$ and $\ell_i$ denote their sizes. As $N-\ell_i\ge N/2$ we have that if $r\ge 2$ then
$$
2g(\cX)-2\ge N,
$$
a contradiction. Therefore, $r=1$ holds. Moreover, no $p$-element of $G$ fixes a point in $\mathcal X$ and therefore $\ell_1=p^hm'$ with $m'<m$ a divisor of $m$. 

Note that  $\mathcal X/S$ is a cyclic extension of $\mathcal X/G$ of order $m$, and hence it is a Kummer extension with Galois group $G/S$. However, there is only one short orbit of $\mathcal X/S$ under the action of $G/S$. This is impossible by Proposition \ref{n=1}.
\end{proof}

\begin{prop}\label{10marbis} 
Assume that $N=p^hm$ with $p\ge 3$, $m>1$, and $(p,m)=1$. 
If $N\ge 2g(\cX)+1$, then $h\le 2$.
\end{prop}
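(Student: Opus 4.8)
The plan is to pass to the quotient curve $\mathcal{Y}=\cX/M$ and study the action on it of the cyclic $p$-group $H=G/M$ of order $p^h$, reducing to the already-treated case $N=p^h$ (Proposition \ref{10mar}). Since $G$ is cyclic and $M\trianglelefteq G$, the group $H$ acts faithfully on $\mathcal{Y}$; write $g_M$ for its genus. I would split the argument according to whether $g_M=0$, $g_M=1$, or $g_M\ge 2$.

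If $g_M=0$, then $H$ is a cyclic subgroup of $\mathrm{PGL}(2,\mathbb{K})$ of $p$-power order; since the Sylow $p$-subgroups of $\mathrm{PGL}(2,\mathbb{K})$ are elementary abelian (Dickson's classification, cf. \cite[Theorem 11.91]{HKT}), every $p$-element has order at most $p$, so $h\le 1$. If $g_M=1$, then $\mathcal{Y}=\cX/M$ is elliptic and Proposition \ref{5apr} applies directly, giving $h=1$. Thus in both low-genus cases the claim holds.

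The main case is $g_M\ge 2$. Here I would exploit that $M$ is tame, its order $m$ being prime to $p$, to write the Riemann--Hurwitz formula for $M$ acting on $\cX$ in orbit form: if $\Omega_1,\dots,\Omega_r$ are the short orbits of $M$ on $\cX$ and $e_j=|M_P|$ for $P\in\Omega_j$, then $|\Omega_j|=m/e_j$ and $d_P=e_j-1$ for each $P\in\Omega_j$, whence
\[
2g(\cX)-2=m\Big(2g_M-2+\sum_{j=1}^r\big(1-\tfrac{1}{e_j}\big)\Big)\ge m\,(2g_M-2).
\]
Combining this with the hypothesis $p^h m=N\ge 2g(\cX)+1>2g(\cX)-2\ge m\,(2g_M-2)$ and dividing by $m$ gives $p^h>2g_M-2$, hence $p^h\ge 2g_M-1$ as both sides are integers. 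Now Proposition \ref{10mar}, applied to the curve $\mathcal{Y}$ of genus $g_M\ge 2$ together with its cyclic automorphism group $H$ of order $p^h\ge 2g_M-1$, yields $h\le 2$, as required.

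I expect the only real subtlety to be the bookkeeping separating the degenerate quotients $g_M\le 1$ — which must be handled via Proposition \ref{5apr} and the structure of $\mathrm{PGL}(2,\mathbb{K})$, since Proposition \ref{10mar} is stated only for genus at least $2$ — from the main case. The main case itself is then a one-line computation, resting on the exact identity $2g(\cX)-2=m\,R$ with $R\ge 2g_M-2$ coming from the tameness of $M$.
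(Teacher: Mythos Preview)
Your proof is correct and follows essentially the same approach as the paper's own proof: both pass to the quotient $\cX/M$, split according to its genus, handle the rational and elliptic cases via Dickson's classification and Proposition~\ref{5apr} respectively, and in the main case $g_M\ge 2$ use the tame Riemann--Hurwitz inequality $2g(\cX)-2\ge m(2g_M-2)$ together with $N\ge 2g(\cX)+1$ to force $p^h\ge 2g_M-1$, allowing Proposition~\ref{10mar} to be invoked. Your write-up is in fact slightly cleaner in the genus-$0$ case, where you invoke directly that Sylow $p$-subgroups of $\mathrm{PGL}(2,\mathbb K)$ are elementary abelian.
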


\begin{proof}
    %Let $M$ be the subgroup of $G$ of order $m$ and let $H$ be the factor group $G/M$. 
    Recall that  $H$ is a cyclic group of order $p^h$ acting on the quotient curve $\mathcal X/M$. 
    Let $\hat g$ be the genus of $\mathcal X/M$.
    Three cases are distinguished.
\begin{itemize}
    \item $\hat g=0$. By the classification of subgroups of $\mathrm{PGL}(2,\mathbb K)$ it follows that $h=1$ holds in this case. 
    
    \item $\hat g=1$. By  Proposition \ref{5apr} we have  $h=1$.

    \item $\hat g\ge 2$. By the Riemann-Hurwitz genus formula applied to $M$ we have
    $$
    2g(\cX)-2\ge m(2\hat g-2),
    $$
    that is
    $$
    (2\hat g-2)\le \frac{2g(\cX)-2}{m}= \frac{2g(\cX)+1-3}{m}\le p^h-\frac{3}{m},
    $$
    and hence
    $$
    |H|\ge 2 \hat g(\cX)-2+\frac{3}{m}
    $$
    which implies $$|H|\ge 2 \hat g-1;$$ finally, Proposition \ref{10mar} applies.
\end{itemize}
    \end{proof}

\begin{prop}\label{12mar} 
Assume that $N=p^hm\ge 2g(\cX)+1$ with $p\ge 3$, $m>1$, and $(p,m)=1$. Let $S$ be the subgroup of $G$ of order $p^h$. Then $h=1$ and  one of the following holds:
\begin{itemize}
\item[(i)] $\gamma(\mathcal X/M)=0$  and $g(\mathcal X/S)\leq 1$; also, the only non-tame orbit $\Omega$ under the action of $G$ has size $|\Omega|\le 3$ for $g(\mathcal X/S)=1$ and $|\Omega|\le 2$ for $g(\mathcal X/S)=0$. If $g(\mathcal X/S)=1$, then $m\in \{2,3,4,6\}$. %if $\tilde g=2$ then $p=3$ and $|\Omega|=1$. 

% MORTO\item[(ii)] $\mathcal X/M$ is elliptic and $X/M$ is an unramified extension of $X/G$

\item[(ii)] $\gamma(\mathcal X/M)=g(\mathcal{X}/M)=2$ and $p=3$; the group $G/M$ of order $3$ fixes two points on $\mathcal X/M$ and acts semiregularly on the other points.
Also, up to birational equivalence $\mathcal{X}/M$ has affine equation $$by^3+cy = ax +
1/x,$$ where $a,b,c \in \mathbb{K}^*$, $H$ consists of the maps
$$
y\mapsto y+v, \qquad x\mapsto x,
$$
where $v$ runs over the roots of  $bY^3+cY$, and $|\aut(\mathcal X/M)|=12$.

\end{itemize}
\end{prop}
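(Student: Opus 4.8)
The plan is to establish $h=1$ first and then the dichotomy, splitting according to the value of $\gamma(\mathcal X/M)$. Throughout, write $\hat g$ for the genus of $\mathcal X/M$, and recall from the proof of Proposition~\ref{10marbis} the trichotomy $\hat g\in\{0,1\}$ versus $\hat g\ge 2$, together with the inequality $|H|=p^h\ge 2\hat g-1$ (which comes from $2g(\mathcal X)-2\ge m(2\hat g-2)$ and $N=p^hm\ge 2g(\mathcal X)+1$).

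First, $h=1$. By Proposition~\ref{10marbis} we already know $h\le 2$, and the cases $\hat g=0$ (a cyclic $p$-subgroup of $\mathrm{PGL}(2,\mathbb K)$ has order $p$) and $\hat g=1$ (Proposition~\ref{5apr}) give $h=1$. So I would assume $\hat g\ge 2$ and $h=2$. Since $p^2\ge 2\hat g-1$ and $p^2$ is odd, either $p^2\ge 2\hat g+1$, and then Theorem~\ref{24mar} applied to $H$ acting on $\mathcal X/M$ forces $h=1$, a contradiction; or $p^2=2\hat g-1$. In the latter case one argues as in Propositions~\ref{10mar}--\ref{13mar}: one gets $\gamma(\mathcal X/M)=0$ (for $p=3$ via \cite[Proposition~3.3]{Large3}), hence $H$ fixes a point of $\mathcal X/M$ and is semiregular elsewhere, the Riemann--Hurwitz formula for $H$ gives $g(\mathcal X/G)=0$ and different $3p^2-1$, and expanding this different through the two ramification breaks $i_1\le i_2$ (with $i_1\ge 1$ and $i_2\equiv i_1\pmod p$ by Hasse--Arf) one checks it has no admissible solution. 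Hence $h=1$; in particular $S$ has order $p$ and $H=G/M$ has order $p$.

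Next, the dichotomy. If $p=3$ and $\hat g=2$, I would apply Proposition~\ref{MZ} with $q=3$: the curve $\mathcal X/M$ has genus $q-1$ and carries the elementary abelian group $H$ of order $3$, and since $p=3$ excludes case~(b) of that proposition, $\mathcal X/M$ is ordinary with affine model $by^3+cy=ax+1/x$ ($a,b,c\in\mathbb K^*$) and $|\mathrm{Aut}(\mathcal X/M)|=12$; the order-$3$ subgroup $H$ must then be the unique Sylow $3$-subgroup, namely the Galois group $\{y\mapsto y+v:\ bv^3+cv=0\}$ of $\mathbb K(\mathcal X/M)/\mathbb K(x)$, which fixes exactly the two places over $x=0$ and $x=\infty$ --- this is~(ii). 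In every remaining situation one has $\gamma(\mathcal X/M)=0$: trivially if $\hat g\le 1$ (using Proposition~\ref{5apr} for $\hat g=1$), and via \cite[Theorem~11.84]{HKT} if $\hat g\ge 2$ and $p>3$ (while $\hat g\ge 2$, $p=3$ forces $\hat g=2$, already treated). I would then prove~(i) in three steps. (a)~\emph{$g(\mathcal X/G)=0$ and a unique non-tame orbit $\Omega$:} since $\gamma(\mathcal X/M)=0$, $H$ fixes a point $\bar P_0\in\mathcal X/M$; lifting it, $\sigma^{1-kp}$ fixes some $P_0$ over $\bar P_0$ for a suitable $k$, so (as $\gcd(1-kp,p)=1$) $p$ divides $o(\sigma^{1-kp})$, hence $S\le G_{P_0}$ and $P_0$ lies in a non-tame orbit; conversely the non-tame $G$-orbits biject with the $H$-fixed points of $\mathcal X/M$, of which there is exactly one --- call it $\Omega$, take $P_0\in\Omega$ with $G_{P_0}=S\rtimes U$, $u=|U|$, and let $t$ be the ramification break of $S$ at $P_0$; the lower bound $d_\Omega>N$ then forces $g(\mathcal X/G)=0$ via Riemann--Hurwitz for $G$. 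Comparing the Riemann--Hurwitz formulas for $G$ and for $M$ on $\mathcal X$ gives the relation $t=u\,(p+2\hat g-1)/(p-1)$, i.e.\ $t=u$ if $\hat g=0$ and $t=2u$ if $\hat g\ge 1$. (b)~\emph{If $\hat g\ge 1$ then $g(\mathcal X/S)=0$ and $|\Omega|=1$:} with $t=2u$, Riemann--Hurwitz for $S$ reads $2g(\mathcal X)-2=p(2g(\mathcal X/S)-2)+(m/u)(p-1)(2u+1)$, and inserting $N\ge 2g(\mathcal X)+1$ makes $g(\mathcal X/S)$ collapse to $0$; then $\mathcal X/S\cong\mathbb P^1$ and $M\le\mathrm{PGL}(2,\mathbb K)$, so $u\in\{1,m\}$, and $u=1$ is excluded (it would force $t=1\ne 2u$), whence $u=m$ and $|\Omega|=1$. (c)~\emph{If $\hat g=0$ then $g(\mathcal X/S)\le 1$ with the stated bounds:} here $t=u$; assuming $g(\mathcal X/S)\ge 2$, the inequality $N\ge 2g(\mathcal X)+1$ first yields $u\ge p+1$, then Theorems~\ref{theorem11.60HKT} and~\ref{theorem11.79} applied to $U$ and to $M$ acting on $\mathcal X/S$ give $u\le 4g(\mathcal X/S)+2$ and $m\le 4g(\mathcal X/S)+4$, and feeding these back --- distinguishing $|\Omega|=1$ (where $G$ fixes a point, so $N\le 4g(\mathcal X)+4$) from $|\Omega|\ge 2$ --- produces $g(\mathcal X/S)(2p-4)\le p$ resp.\ $\le 3$, impossible. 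So $g(\mathcal X/S)\le 1$; for $g(\mathcal X/S)=1$ the elliptic curve $\mathcal X/S$ carries the cyclic non-semiregular group $M$, hence $m\in\{2,3,4,6\}$ by Proposition~\ref{ellittica}(b) and $|\Omega|=m/u\le m/2\le 3$ (as $u\ge 2$); for $g(\mathcal X/S)=0$, $M\le\mathrm{PGL}(2,\mathbb K)$ gives $u\in\{1,m\}$, whence $|\Omega|\le 2$ (the case $u=1$ forcing $m=2$, $|\Omega|=2$).

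The step I expect to be the main obstacle is~(c) in the case $\hat g=0$: controlling the genus of $\mathcal X/S$ is delicate because Riemann--Hurwitz by itself does not bound it, and the crucial extra leverage comes from applying the classical automorphism bounds of Theorems~\ref{theorem11.60HKT} and~\ref{theorem11.79} not to $\mathcal X$ but to the intermediate quotient $\mathcal X/S$, combined with the case split on $|\Omega|$ and with Proposition~\ref{n=1} to dispose of $u=1$.
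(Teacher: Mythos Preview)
Your overall strategy differs substantively from the paper's: you organise case~(i) by $\hat g=g(\mathcal X/M)$ and use the \emph{exact} ramification break $t=us$ (coming from the tower of differents and Riemann--Hurwitz for $H$ on $\mathcal X/M$), whereas the paper never cases on $\hat g$ at all --- once $\gamma(\mathcal X/M)=0$ it works directly with $\tilde g=g(\mathcal X/S)$, using only the Hasse--Arf \emph{lower bound} $t\ge m'$ from \cite[Lemma~11.75(iv)]{HKT}. That gives $m\ge p(2\tilde g-2)+3+|\Omega|(p-1)$, and one comparison with $m\le 4\tilde g+4$ (Theorem~\ref{theorem11.79} on $\mathcal X/S$) forces $\tilde g\le 1$ or $(p,\tilde g,|\Omega|)=(3,2,1)$, the latter being killed by the sharper $m\le 4\tilde g+2$ from Theorem~\ref{theorem11.60HKT}. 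The paper also rules out $h=2$ \emph{inside} this $\gamma=0$ analysis (two jumps in the filtration of $G_P$, $p\mid w$), rather than via Theorem~\ref{24mar} applied to $\mathcal X/M$ as you do.

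There is, however, a genuine error in your step~(b). With $t=2u$ (which is indeed what your formula gives for the admissible pairs $(\hat g,p)$ with $\hat g\ge 1$), Riemann--Hurwitz for $S$ combined with $N\ge 2g(\mathcal X)+1$ yields
\[
2p-3-m(p-2)\;\ge\;2p\tilde g+|\Omega|(p-1).
\]
For $m\ge 2$ the left side is at most $1$, while the right side is at least $p-1\ge 2$ even when $\tilde g=0$. So the case $\hat g\ge 1$ with $\gamma(\mathcal X/M)=0$ is simply \emph{impossible}, not ``$\tilde g=0$ and $|\Omega|=1$'' as you claim; your subsequent analysis of $u\in\{1,m\}$ is therefore vacuous, and your stated reason for discarding $u=1$ (``would force $t=1\ne 2u$'') is wrong anyway --- your own formula gives $t=2u=2$ when $u=1$. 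This mistake does not actually damage the proposition (the subcase is empty, so (i) holds trivially there), but the argument as written is internally inconsistent. Minor issues: in your $h=2$, $p^2=2\hat g-1$ subcase the different is $3p^2-3$, not $3p^2-1$; and in step~(c) the detour through ``$u\ge p+1$'' and ``$N\le 4g(\mathcal X)+4$'' is unnecessary --- the bounds $u\le 4\tilde g+2$ and $m\le 4\tilde g+4$ on $\mathcal X/S$ already do the job, exactly as in the paper.
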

\begin{proof}
By the proof of Proposition \ref{10marbis}, the quotient curve $\mathcal X'=\mathcal X/M$ of genus $\hat g$ admits a cyclic automorphism group $H$ of size $p^h$ with  $p^h\ge 2\hat g-1$.

Assume that the $p$-rank of $\mathcal X'$ is greater than zero. 
From $p^h\ge 2\hat g-1$ it follows by \cite[Theorem 1]{NAKA} that either $\hat g=1$ or 
$\hat g\ge 2$ and $p=3$. The former case is ruled out by Proposition \ref{5apr}.
%Note that if $\hat g=0$ or $\hat g\ge 2$ and $p>3$ the $p$-rank of $\mathcal X' $ is certainly zero. Otherwise, 
%when $\hat g\ge 2$ and $p=3$ than 
In the latter case, it has already been noticed in the proof of Proposition \ref{10mar} that  $p^h=3$ holds and therefore $\hat g=2$. Then, (ii) holds by Proposition \ref{MZ}.

Assume now that the $p$-rank of $\mathcal{X}'$ is zero. Then $H$ fixes one point $\bar P$ of $\mathcal X'$ and acts semiregularly on the other points.
Let $\Omega$ be the orbit of $\mathcal X$ under the action of $M$ corresponding to the point $\bar P$ of $\mathcal X'$. Clearly $\Omega$ is the only non-tame orbit under the action of $G$.

Moreover, $\Omega$ is both an $M$-orbit and a $G$-orbit, as $s(\bar P)=\bar P$ for each $s\in S$. This means that for each point $P\in \Omega$ we have
$$
|\Omega|=\frac{p^hm}{|G_P|}=\frac{m}{|M_P|}
$$
which implies that $p^h\mid |G_P|$ and $p^h=|G_P^{(1)}|$, \textcolor{black}{that is, $G_P=H\times M_P$}. Let $m'=\frac{m}{|\Omega|}=|M_P|$. 

 We apply the Riemann-Hurwitz genus formula to $H$ and we denote by $\tilde g$ the genus of $\mathcal X/S$. 
 
 First, we show that the case $h=2$ cannot actually occur. Assume that $|S|=|H|=p^2$.
 %Recall that $h=2$ implies that the $p$-rank of $\mathcal X/M$ is equal to zero.
 Since $G_P$ is abelian and $G_P/G_P^{(1)}$ has order $m'$, by \cite[Lemma 11.75 (iv)]{HKT} we have that there exist positive integers $v$ and $w$ such that
$$
S=G_P^{(1)}=G_P^{(2)}=\ldots=G_P^{(vm')},
$$
and  $$|G_P^{(vm'+1)}|=\ldots=|G_P^{((v+w)m')}|=p,$$
and $|G_P^{((v+w)m'+1)}|=1$.
 
Taking into account \cite[Lemma 11.75 (v)]{HKT}, we have
$$
vm'\equiv (v+w)m' \pmod p
$$
and since $(m',p)=1$ we have that $p$ divides $w$.
Therefore, $w\ge p$ yields
$$
2g(\cX)-2\ge p^2(2\tilde g-2)+|\Omega|(m'p^2-m'+p^2-1+p^2m'-pm')
$$
and hence
$$
p^2m-3\ge-2p^2+ 2mp^2-m(p+1)+|\Omega|(p^2-1)
$$
whence
$$
2p^2-3\ge m(p^2-p-1)+|\Omega|(p^2-1)\ge 2(p^2-p-1)+p^2-1
$$
and
$$
 p^2-2p\le 0
$$
a contradiction.

Therefore, $h=1$ can be assumed. By the Riemann-Hurwitz genus formula applied to $S$, taking into account (iv) of \cite[Lemma 11.75]{HKT} we have 
$$
mp-3\ge 2g(\cX)-2\ge p(2\tilde g-2)+|\Omega|((m'+1)(p-1))
$$
and hence
$$
m\ge p(2\tilde g-2)+3+|\Omega|(p-1).
$$
Note that this is a contradiction with the bound on the size of an abelian subgroup of the automorphism group of $\mathcal{X}/S$ (see \cite[Theorem 11.79]{HKT}), unless $\tilde g\leq 1$ or $p=3$,
$\tilde g=2$, and $|\Omega|=1$. However, by \cite[Theorem 11.60]{HKT}, the latter case cannot actually occur taking into account that $M$ fixes a point.

Therefore, either $\tilde g=0$ or $\tilde g=1$.

Observe that $m>|\Omega|$ holds with the only  exception $m=|\Omega|=2$ for $\tilde g=0$.

To investigate $|\Omega|$  we  first assume that $\tilde g=0$. Then $G/S$ is a cyclic group of order $m$ coprime with $p$ acting on the projective line on an algebraically closed field of characteristic $p$. Therefore, if a non-trivial element of $G/S$ fixes a point, then it has precisely two fixed points on $\mathcal X/S$ and acts semiregularly on the other points. It has already been noticed that either $m>|\Omega|$ or $m=|\Omega|=2$. Assume that $m>|\Omega|$. As each point in $\Omega$ is fixed by $S$, there exist $|\Omega|$ points on the projective line fixed by some elements in $G/S$; therefore $|\Omega|\le 2$.  

If $\tilde g=1$, then $m>|\Omega|$ and hence $G/S$ admits a non-trivial (cyclic) subgroup of size $m'=m/|\Omega|>1$ fixing each point of $\mathcal X/S$ corresponding to a point in $\Omega$.  
Then $G/S$ does not act semiregularly on $\mathcal X/S$ and by Proposition \ref{ellittica} 
we have $m\in \{2,3,4,6\}$. As $m=m' |\Omega|$ one of the following holds:
\begin{itemize}
\item[(e1)] $m=m'=2$, $|\Omega|=1$;

\item[(e2)] $m=m'=3$, $|\Omega|=1$;

\item[(e3)] $m=m'=4$, $|\Omega|=1$;

\item[(e4)] $m=4, m'=2$, $|\Omega|=2$;

\item[(e5)] $m=m'=6$, $|\Omega|=1$;

\item[(e6)] $m=6, m'=2$, $|\Omega|=3$;

\item[(e7)] $m=6, m'=3$, $|\Omega|=2$.

\end{itemize}
\end{proof}

\begin{prop}\label{15mar}
    Assume that $N=pm\ge 2g(\cX)+1$ with $p\ge 3$, $m>1$, and $(p,m)=1$. Let $S$ be the subgroup of $G$ of order $p$ and $M$ the subgroup of $G$ of order $m$.
If $\mathcal X/M$ is not rational then  $p=3$ and $g(\mathcal X/M)\in \{1,2\}$.

\end{prop}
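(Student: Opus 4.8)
The plan is to invoke Proposition~\ref{12mar} and reduce the statement to showing that its alternative~(i) cannot yield a non-rational quotient $\mathcal{X}/M$. Since $N=pm$, Proposition~\ref{12mar} applies with $h=1$, so either (i) or (ii) holds; in case~(ii) we already have $p=3$ and $g(\mathcal{X}/M)=2$, as wanted. So assume case~(i): $\gamma(\mathcal{X}/M)=0$, $\tilde g:=g(\mathcal{X}/S)\in\{0,1\}$, and there is a unique non-tame $G$-orbit $\Omega$ on $\mathcal{X}$, which is fixed pointwise by $S$. Suppose for contradiction that $\hat g:=g(\mathcal{X}/M)\ge1$; I will contradict $N\ge 2g(\mathcal{X})+1$. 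First I would verify that $g(\mathcal{X}/G)=0$: the cover $\mathcal{X}/S\to\mathcal{X}/G$ is tame of degree $m$, and if $\tilde g=1$ the image of $\Omega$ in $\mathcal{X}/S$ is a single $(G/S)$-orbit of size $|\Omega|<m$ (using that $S$ fixes $\Omega$ pointwise and that $m>|\Omega|$ in this subcase), so the cover ramifies and the Riemann--Hurwitz formula forces $g(\mathcal{X}/G)=0$. Since $\gamma(\mathcal{X}/M)=0$, the group $H=G/M$ of order $p$ fixes exactly one point $\bar P\in\mathcal{X}/M$ and is semiregular elsewhere, so $\mathcal{X}/M\to\mathcal{X}/G$ ramifies only over the image of $\bar P$; Riemann--Hurwitz then gives $2\hat g-2=-2p+(t+1)(p-1)$ for the unique ramification jump $t\ge1$, whence $\hat g=\tfrac{1}{2}(t-1)(p-1)$ and $\hat g\ge1$ forces $t\ge2$.

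To reach a contradiction I would compute the different exponent $d_P$ of $\mathcal{X}\to\mathcal{X}/G$ at a point $P\in\Omega$ in two ways. From the filtration $G_P^{(1)}=\cdots=G_P^{(L)}=S$, $G_P^{(L+1)}=1$, with $m':=|M_P|=m/|\Omega|$, one gets $d_P=(pm'-1)+L(p-1)$; from the tower $\mathcal{X}\to\mathcal{X}/M\to\mathcal{X}/G$ --- first step tame of degree $m$, second step wild of degree $p$ with jump $t$ --- one gets $d_P=(m'-1)+m'(t+1)(p-1)$. Equating yields $L=m't$. Feeding this into the Riemann--Hurwitz formula for the degree-$p$ cover $\mathcal{X}\to\mathcal{X}/S$, which ramifies only --- and totally --- over the $|\Omega|$ points of $\Omega$, each with different exponent $(L+1)(p-1)$, gives
$$2g(\mathcal{X})-2=p(2\tilde g-2)+(mt+|\Omega|)(p-1).$$
Combining with $2g(\mathcal{X})-2\le N-3=pm-3$ and $t\ge2$ forces $m(p-2)+|\Omega|(p-1)+3\le0$ if $\tilde g=1$, and $m(p-2)\le p-2$ (so $m\le1$) if $\tilde g=0$; both are impossible since $m>1$ and $p\ge3$. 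Hence $\hat g=0$, a contradiction, so case~(ii) of Proposition~\ref{12mar} holds and $p=3$, $g(\mathcal{X}/M)=2\in\{1,2\}$.

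I expect the main obstacle to be the ramification bookkeeping under the tame subquotient $M$: matching the intrinsic ramification filtration of $G_P$ against the different computed along the tower $\mathcal{X}\to\mathcal{X}/M\to\mathcal{X}/G$ to extract the relation $L=m't$, which is what links the global genus of $\mathcal{X}$ to the jump governing $g(\mathcal{X}/M)$. A secondary, more routine point is the verification that $g(\mathcal{X}/G)=0$ in the $\tilde g=1$ subcase, which leans on the orbit-size information already obtained in Proposition~\ref{12mar}.
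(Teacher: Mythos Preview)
Your argument is correct and takes a genuinely different route from the paper's proof. The paper argues as follows in case~(i): it first shows (via a small detour through Homma's classification) that the Kummer cover $\mathcal{X}\to\mathcal{X}/M$ must ramify, then invokes Proposition~\ref{n=1} to produce a second short $M$-orbit $\Omega'\ne\Omega$; since $S$ permutes such orbits semiregularly it obtains at least $p$ of them, and a Riemann--Hurwitz estimate for $M$ yields $p>2\hat g+1$, after which Homma's theorem forces $\hat g\le 1$ and Proposition~\ref{5apr} gives $p=3$ when $\hat g=1$. Your approach instead exploits the transitivity of the different along the tower $\mathcal{X}\to\mathcal{X}/M\to\mathcal{X}/G$ to pin down the relation $L=m't$ between the wild jump upstairs and the jump governing $\hat g$, and then a single Riemann--Hurwitz computation for $\mathcal{X}\to\mathcal{X}/S$ contradicts $N\ge 2g(\mathcal{X})+1$ directly. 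This avoids both appeals to Homma's result and the orbit-multiplication trick, and in fact proves something slightly sharper than the stated proposition: you rule out $\hat g\ge 1$ entirely in case~(i), so the only non-rational possibility is case~(ii) with $\hat g=2$. That sharper conclusion is consistent with the paper --- the case $\hat g=1$ is eliminated later, in the proof of Proposition~\ref{1mag} --- so your argument effectively short-circuits part of that subsequent case analysis. The only places to tighten in a full write-up are the ones you flag yourself: make explicit that $S_P^{(i)}=S\cap G_P^{(i)}$ so that the different exponent for $\mathcal{X}\to\mathcal{X}/S$ really is $(L+1)(p-1)$ at each $P\in\Omega$, and note that $|\Omega|\ge 1$ is being used in the $\tilde g=0$ inequality.
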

\begin{proof}
Let $\hat g$ denote $g(\mathcal X/M)$ and let $H=G/M$. By Proposition \ref{12mar}, either $\mathcal{X}/M$ has $p$-rank equal to zero, or $p=3$, $\hat g=2$, and we are in case (ii) of Proposition \ref{12mar}. Assume that the former case holds, and as in the proof of Proposition \ref{12mar} let $\Omega$ be the only non-tame orbit under the action of $G$. First, we prove that the extension $\mathcal X$ over $\mathcal X/M$ ramifies. Assume by contradiction that the extension is unramified; then, in particular, $m=|\Omega|$ holds. From the proof of Proposition \ref{12mar}, we have that $m=2$ and $\mathcal X/S$ is rational. Also,
$$
2p-3\ge 2g(\cX)-2=4\hat g-4.
$$
Then $p\ge 2\hat g-\frac{1}{2}$.
As $g(\cX)\ge 2$ we have that $\hat g\ge 2$ as well.
By \cite{Homma}, $p= 2\hat g+1$ and hence
$2g(\cX)-2 = 2(p-3)$. This yields $g(\cX)+2=p$ which is impossible by \cite{Homma}.

Since the extension is a Kummer extension, by Proposition \ref{n=1} it is not possible that $\Omega$ is the only short orbit  of $\mathcal X$ under the action of $M$. %If $\hat g>2$ or $\hat g=2$ and $p>3$ at most one of them is fixed by $S$ (see the proof of Theorem \ref{12mar}).
Therefore, there exists a short orbit  $\Omega'\neq \Omega$ under the action of $M$. 
If $\hat g>2$ or $\hat g=2$ and $p>3$ then $\Omega'$ is not fixed by $S$ (see the proof of Proposition \ref{12mar}). A proper subgroup $M'$ of $M$ fixes each point in $\Omega'$. On the other hand, $S$ acts on the set of fixed points by $M'$ semiregularly, since $S(\Omega')\ne \Omega'$. Therefore, $M$ has at least $p$ short orbits of size $\Omega'$, other than $\Omega$. 

Hence, 
$$
pm-3\ge 2g(\cX)-2\ge m(2\hat g-2)+m-|\Omega|+p(m-|\Omega'|).
$$
Therefore,
$$
p\ge \frac{m}{|\Omega'|}(2\hat g-2)+\frac{m+3-|\Omega|}{|\Omega'|}
$$
Note that $\frac{m+3-|\Omega|}{|\Omega'|}$ is strictly greater than $1$ as, for $m>|\Omega|$ we have
$$
\frac{m+3-|\Omega|}{|\Omega'|}\ge \frac{m+3-\frac{m}{2}}{\frac{m}{2}},
$$
whereas $m=|\Omega|$ yields $m=2$ and hence
$\frac{m+3-|\Omega|}{|\Omega'|}=3$.

Then,
$$
p\ge \frac{m}{|\Omega'|}(2\hat g-2)+2\ge 4\hat g-2>2\hat g+1.
$$
By \cite{Homma}, $\hat g\le 1$ holds and by Proposition \ref{5apr} we have that $\hat g=1$ implies $p=3$.
\end{proof}

\begin{prop}\label{1mag} Let $p\geq 3$. Let $G$ be a cyclic group of automorphisms acting on a curve $\cX$ of genus $g(\cX)\ge 2$ of order $N$ greater than or equal to $2g(\cX)+1$. If $|G|=p^hm$ with $h\ge 1$, $m>1$, and $(p,m)=1$ then 
 $|G|=pm$. Also, for $S$ the subgroup of $G$ of order $p$ and $M$ the subrgoup of $G$ of order $m$, one of the following holds:
\begin{itemize}
\item[(Ia)] $g(\mathcal X/M)=g(\mathcal X/S)=0$, $g(\cX)=\frac{1}{2}(p-1)(m-1)$, and $S$ fixes exactly one point;
\item[(Ib)] $g(\mathcal X/M)=g(\mathcal X/S)=0$, $m=2$, $g(\cX)=p-1$, and $S$ fixes exactly two points.

\end{itemize}

\end{prop}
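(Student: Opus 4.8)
The plan is to combine the previous propositions to force $h=1$, then run the Riemann--Hurwitz formula on $G$, $M$, and $S$ to pin down the genus and the fixed-point behavior of $S$. First, Proposition \ref{10marbis} gives $h\le 2$, and Proposition \ref{12mar} already upgrades this to $h=1$ (its proof rules out $|S|=p^2$), so $N=pm$; moreover Proposition \ref{12mar} tells us that either $\gamma(\mathcal X/M)=0$ with $\tilde g:=g(\mathcal X/S)\le 1$, or we are in case (ii) with $p=3$, $g(\mathcal X/M)=2$. I would dispose of the elliptic and the genus-$2$ cases for $\mathcal X/M$ using Proposition \ref{15mar} together with a direct Riemann--Hurwitz computation: in each such case one gets, via Proposition \ref{12mar}(i)'s list of possibilities for $m$ and $|\Omega|$ (e1)--(e7) or via case (ii), a bound of the shape $2g(\mathcal X)-2 \ge$ (something forcing $N < 2g(\mathcal X)+1$), a contradiction. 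So one is left with $g(\mathcal X/M)=0$; by Proposition \ref{12mar}(i) then also $\tilde g=g(\mathcal X/S)\in\{0,1\}$, and the case $\tilde g=1$ must be excluded as well — again by pushing the possibilities $m\in\{2,3,4,6\}$ through Riemann--Hurwitz applied to $S$ and to $G$. This leaves $g(\mathcal X/M)=g(\mathcal X/S)=0$.

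Next, with both quotients rational, I would write the Riemann--Hurwitz formula for $S=\langle\tau\rangle$ of order $p$ acting on $\mathcal X$ with $\mathcal X/S$ rational. Since $\gamma(\mathcal X/M)=0$, the unique non-tame $G$-orbit $\Omega$ is the orbit lying over the fixed point of $H$ on $\mathcal X/M$, and from the proof of Proposition \ref{12mar} we have $G_P = S\times M_P$ for $P\in\Omega$, with $m'=|M_P|=m/|\Omega|$ and $|\Omega|\le 2$. The key case split is $|\Omega|=1$ versus $|\Omega|=2$. If $|\Omega|=1$, then $S$ fixes exactly the single point $P_0$ of $\Omega$ (no other point can be $S$-fixed, since every other $G$-orbit is tame hence $S$ acts semiregularly on it), the extension $\mathcal X\to\mathcal X/S$ is a ramified $\mathbb Z/p$-extension of $\mathbb P^1$ branched only at one point, and the Hurwitz formula reads $2g(\mathcal X)-2 = -2p + d_{P_0}$ with $d_{P_0}=(|S_{P_0}^{(0)}|-1)+\sum_{i\ge 1}(|S_{P_0}^{(i)}|-1)$. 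Here I would use that an Artin--Schreier cover of $\mathbb P^1$ branched at one point has a single jump, so (by \cite[Lemma 11.75, Theorem 12.5]{HKT}) the genus of $\mathcal X/M$ being $0$ and $m$ acting also forces the jump to be controlled; cross-checking with the Riemann--Hurwitz formula for $M$ (which is tame of order $m$ on $\mathcal X$ with $\mathcal X/M$ rational, so $2g(\mathcal X)-2 = -2m + \sum(\text{ramification})$) and with the formula for $G$ gives, after arithmetic, $g(\mathcal X)=\tfrac12(p-1)(m-1)$; that is (Ia). If instead $|\Omega|=2$, then since $G/S$ acts on the rational curve $\mathcal X/S$ and has two fixed points there (coming from the two points of $\Omega$), the possibilities of Proposition \ref{12mar}(i) give $m=2$, $m'=2$; then $S$ fixes both points of $\Omega$ and nothing else, and the same two Riemann--Hurwitz computations (for $S$ on $\mathcal X$ with two branch points over $\mathbb P^1$, and for $G$) yield $g(\mathcal X)=p-1$; that is (Ib).

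More precisely, in the $|\Omega|=1$ case I would compute $d_{P_0}$ explicitly: write $p=|S_{P_0}^{(0)}|=\cdots=|S_{P_0}^{(j)}|$, $|S_{P_0}^{(j+1)}|=1$, so $d_{P_0}=(j+1)(p-1)$, and the Hurwitz formula for $S$ gives $2g(\mathcal X)-2=-2p+(j+1)(p-1)$, i.e.\ $2g(\mathcal X)=(j-1)(p-1)$. On the other hand $M$ of order $m$ (tame) on $\mathcal X$ has $\mathcal X/M$ rational; the branch points of $\mathcal X\to\mathcal X/M$ are the images of the short $M$-orbits, and since $\mathcal X\to\mathcal X/M$ is a cyclic (Kummer) cover of $\mathbb P^1$ by Proposition \ref{n=1} there are at least two, so I would use that together with the constraint $N\ge 2g(\mathcal X)+1=  (j-1)(p-1)+1$ and $N=pm$ to force $j=m$, giving $g(\mathcal X)=\tfrac12(p-1)(m-1)$; then $N=pm\ge 2g(\mathcal X)+1=(p-1)(m-1)+1=pm-p-m+2$ holds automatically, consistent with the statement. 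The $|\Omega|=2$, $m=2$ case is the same computation with the branching over two points of $\mathbb P^1$ under $S$.

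The main obstacle I expect is the bookkeeping needed to convert the three Riemann--Hurwitz relations (for $S$, for $M$, and for $G$) into the single exact genus value $g(\mathcal X)=\tfrac12(p-1)(m-1)$ (resp.\ $p-1$) while simultaneously showing that $S$ has exactly one (resp.\ two) fixed points and no other ramification of $S$ occurs: one must be careful that the only non-tame $G$-orbit is $\Omega$, that $S$ is semiregular off $\Omega$, and that the Artin--Schreier jump is forced to be maximal by the rationality of $\mathcal X/M$ combined with the hypothesis $N\ge 2g(\mathcal X)+1$. The elimination of the sub-cases $g(\mathcal X/M)\in\{1,2\}$ and $g(\mathcal X/S)=1$ is routine but tedious, and I would organize it by feeding the finite lists ((e1)--(e7), and case (ii)) one at a time into the Hurwitz formula for $G$.
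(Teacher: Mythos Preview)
Your overall architecture is right and close to the paper's, but the final ``genus pinning'' step runs along a different track, and one of your justifications is garbled.

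\textbf{Where you diverge from the paper.} Once both quotients are rational you compute $g(\mathcal X)$ via the Riemann--Hurwitz formula for $S$ together with the Artin--Schreier jump $j$. The paper does the opposite: it counts the number $v$ of \emph{$M$-fixed points} on $\mathcal X$ by lifting the (at most two) fixed points of the cyclic tame group $G/S$ on $\mathbb P^1=\mathcal X/S$, obtaining $v\in\{0,p+1,2p\}$, and then applies Riemann--Hurwitz for the tame group $M$: $2g(\mathcal X)-2=-2m+v(m-1)$. This gives $g(\mathcal X)=\tfrac12(p-1)(m-1)$ for $v=p+1$ and $g(\mathcal X)=p-1$ for $v=2p$ (with $m=2$), with no wild-ramification bookkeeping at all. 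Your $S$-route can be made to work too, but only because of a fact you do not state: by \cite[Lemma 11.75(iv)]{HKT} the jump index $j$ at $P_0$ is a multiple of $m'=|G_{P_0}/G_{P_0}^{(1)}|$. In the $|\Omega|=1$ case $m'=m$, so $j\in\{m,2m,\ldots\}$; only then does $pm\ge 2g(\mathcal X)+1=(j-1)(p-1)+1$ force $j=m$. Without that divisibility, the inequality gives merely an upper bound on $j$, not an exact value. You should make this Hasse--Arf step explicit.

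\textbf{A confused point.} In the $|\Omega|=2$ case you write that the two fixed points of $G/S$ on $\mathcal X/S$ ``come from the two points of $\Omega$''. This is backwards: $\Omega$ is a single $M$-orbit and maps bijectively (via $\mathcal X\to\mathcal X/S$) to a single $G/S$-orbit $\bar\Omega$ of size $2$, hence $\bar\Omega$ is a \emph{long} orbit of the order-$2$ group $G/S$, not a pair of fixed points. The correct deduction that $|\Omega|=2\Rightarrow m=2$ is: if $m>2$ then $m'=m/|\Omega|>1$, so $M_P\ne 1$ for $P\in\Omega$, hence each point of $\bar\Omega$ is fixed by a nontrivial element of $G/S$; but a nontrivial tame cyclic subgroup of $\mathrm{PGL}(2,\mathbb K)$ has exactly two common fixed points, forcing $\bar\Omega$ to be a union of fixed points, contradicting $|\bar\Omega|=2$ as an orbit. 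Your $S$-computation then still works (with $m'=1$ the jump $j$ is unconstrained by Hasse--Arf, but $2p\ge 2j(p-1)+1$ alone forces $j=1$).

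\textbf{Order of the eliminations.} You propose to kill $g(\mathcal X/M)\in\{1,2\}$ first, then $g(\mathcal X/S)=1$. The paper does it the other way: it eliminates $g(\mathcal X/S)=1$ by running (e1)--(e7) through the formula $2g(\mathcal X)-2\ge -2m+\sum_{\alpha\in M\setminus\{1\}}N(\alpha)$, and only afterwards (inside the case $g(\mathcal X/S)=0$) uses the value of $v$ to exclude $g(\mathcal X/M)=1$. The elimination of $g(\mathcal X/M)=2$ (case (ii), $p=3$) in the paper is a separate short argument via a fixed-point bound for $S$. Your order can be made to work, but you will find the $g(\mathcal X/M)=1$ elimination awkward until you know $g(\mathcal X/S)=0$, since the count of $M$-fixed points relies on the structure of $G/S$ on $\mathbb P^1$.
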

\begin{proof}
 Recall that $G/S$ is cyclic group of order $m=N/p$ acting on $\mathcal X/S$. Also, points of $\mathcal X/S$ are either fully ramified or unramified with respect to the covering $\mathcal X \to \mathcal X/S$.

 We first prove that the $p$-rank $\gamma(\mathcal X/M)$ of $\mathcal{X}/M$ is equal to $0$. Assume on the contrary that 
$\gamma(\mathcal X/M)>0$. Then, by Proposition \ref{12mar}, we are in the following setting: $p=3$ and $\mathcal X/M$ \textcolor{black}{is an ordinary curve of} genus $2$; the group $G/M$ of order $3$ fixes two points on $\mathcal X/M$ and acts semiregularly on the other points. 

We are going to show that actually such situation cannot occur.
     As $g(\cX/M)=2$ and $3m\geq 2g(\cX)+1$, the Riemann-Hurwitz formula applied to $M$ gives
    $$
    3m-3\geq 2g(\cX)-2=2m+\sum_{i=1}^n (m-|\Omega_i|)\ge 2m+n\frac{m}{2},
    $$
    where $\Omega_1,\ldots, \Omega_n$ are the short orbits of $M$ on $\cX$. Therefore, $n\leq 1$ holds. However, by Proposition \ref{n=1}, we have $n=0$, and hence $g(\cX)=m+1$. Now, let $\bar{P}_1, \bar{P}_2$ be the two points on $\cX/M$ fixed by $G/M$. Then, $S$ fixes each point from the two (long) $M$-orbits, say $\Omega_1$ and $\Omega_2$,  lying over $\bar{P}_1$ and $\bar{P}_2$. This means that $S$ fixes at least $2m$ points of $\cX$. However, by \cite[Lemma 11.106]{HKT} the maximum number of fixed points of $S$ is
    $$
    2+\frac{2g(\cX)}{|S|-1}=m+3,
    $$
   which yields $m=2$. However, for $m=2$,  $p=3$, and $g(\cX)=3$, we have that $pm\geq 2g(\cX)+1$ does not hold.

 So, from now on we assume that the $p$-rank of $\mathcal{X}/M$ is zero. Then, by Propositions \ref{5apr}, \ref{12mar} and \ref{15mar} we have the following information:
 \begin{itemize}
     \item $g(\mathcal X/M)\leq 1$. Moreover, $g(\mathcal X/M)=1$ implies $p=3$;
     \item $g(\mathcal X/S)\leq 1$;
     \item there exists a unique non-tame orbit $\Omega$ under the action of $G$ on $\cX$.
 \end{itemize}
Clearly, fully ramified points of $\mathcal X/S$ correspond to the points of $\Omega$, and with a slight abuse of notation they will be identified with the points of $\Omega$.
If a point $\bar P\in \mathcal X/S$ not in $\Omega$ is fixed by an element $\bar \alpha$ in $G/S$, then the element of $M$ of the same order, say $\alpha$, fixes all the $p$ points over $\bar P$ with respect to the covering $\mathcal X \to \mathcal X/S$. On the other hand, if $\bar P\in \Omega$ then $\alpha$ fixes the only point over $\bar P$.
Now, let $v$ be the number of points fixed by $M$ in $\mathcal X$. We distinguish two cases according to whether $g(\mathcal X/S)=0$ or $g(\mathcal X/S)=1$.

\textbf{CASE 1: $g(\mathcal X/S)=0$.}

In this case, either $G/S$ doesn't fix any point of $\mathcal X/S$, or it fixes two points and acts semiregularly on the other points of $\mathcal X/S$.
Note also that in the latter case these two points cannot be both in $\Omega$, as $\Omega$ is an $M$-orbit (see the proof of Proposition \ref{12mar}).
Taking into account that when $g(\mathcal X/S)=0$ then $|\Omega|\le 2$, by the discussion above we have
$$
v\in\{0,p+1,2p\},
$$
and $v=2p$ can occur only when $|\Omega|=m=2$.

We now show that $g(\mathcal X/M)=0$. If on the contrary  $g(\mathcal X/M)=1$, then $p=3$, and by the Riemann-Hurwitz formula applied to $M$ we have 
$$
2g(\cX)-2=v(m-1)\in \{0,4(m-1),6\},
$$
a contradiction.

Then $g(\mathcal X/M)=0$ and by the Riemann-Hurwitz formula applied to $M$ we have 
$$
2g(\cX)-2=-2m+v(m-1)=(v-2)m-v;
$$
hence,
$$
2g(\cX)-2\in \{-2m,(p-1)(m-1)-2,2p-4 \}.
$$
As $2g(\cX)-2=-2m$ cannot occur, we have that either (Ia) or (Ib) holds.
%the following two possibilities:
%\begin{itemize}
%\item[(Ia)] $g(\mathcal X/M)=0$, $|\Omega|=1$, $g(\cX)=\frac{1}{2}(p-1)(m-1)$;
%\item[(Ib)] $g(\mathcal X/M)=0$, $|\Omega|=2$, $m=2$, $g(\cX)=p-1$.
%\end{itemize}

\textbf{CASE 2: $g(\mathcal X/S)=1$.}

By Proposition \ref{12mar} and its proof, we know that $m>|\Omega|$, $G/S$ admits a (non-trivial) cyclic subgroup of order $m'=m/|\Omega|$ fixing each point of $\cX/S$ corresponding to a point in $\Omega$, and one of the cases (e1)-(e7) holds. We will prove that each of these cases cannot actually occur. To do so, first note that by the Riemann-Hurwitz formula we have
\begin{equation}\label{RHfixed}
2g(\cX)-2=m(2g(\mathcal X/M)-2)+\sum_{\alpha \in M, \alpha\neq id}N(\alpha)\geq -2m+ \sum_{\alpha \in M, \alpha\neq id}N(\alpha),
\end{equation}
where $N(\alpha)$ is the number of points of $\mathcal X$ fixed by $\alpha$. We show that provides a contradiction with $N=pm\geq 2g(\cX)+1$ in each of the cases (e1)-(e7).
\begin{itemize}
\item[(e1)]  $m=2$, $|\Omega|=1$, $G/S$ fixes $4$ points, and one of such points is in $\Omega$. Then, Equation \eqref{RHfixed} yields 
$$
2g(\cX)-2\ge -2(2)+1+3p,
$$
a contradiction.
\item[(e2)] $m=3$, $|\Omega|=1$, $G/S$ fixes $3$ points, and one of such points is in $\Omega$. Then, Equation \eqref{RHfixed} yields 
$$
2g(\cX)-2\ge -2(3)+2(1+2p),
$$
a contradiction.
\item[(e3)] $m=4$, $|\Omega|=1$, $G/S$ fixes the point in $\Omega$ and one point not in $\Omega$, whereas its subgroup of order $2$ fixes the point in $\Omega$ and two points not in $\Omega$. Then, Equation \eqref{RHfixed} yields
$$
2g(\cX)-2\ge -2(4)+(p+1+p+1+3p+1),
$$
a contradiction.
\item[(e4)] $m=4$, $|\Omega|=2$, $G/S$ fixes two points not in $\Omega$, whereas its subgroup of order $2$ fixes two points in $\Omega$ and two points not in $\Omega$.
Then, Equation \eqref{RHfixed} yields
$$
2g(\cX)-2\ge -2(4)+(2p+2p+2p+2),
$$
a contradiction.
\item[(e5)] $m=6$, $|\Omega|=1$, 
 $G/S$ fixes the point in $\Omega$, its subgroup of order $3$ fixes two more points not in $\Omega$, and its subgroup of order $2$ fixes $3$ more points not in $\Omega$.
Then, Equation \eqref{RHfixed} yields
$$
2g(\cX)-2\ge -2(6)+(1+1+1+2p+1+2p+1+3p),
$$
a contradiction, unless $p=3$. However, in this case $p=3$ is not possible as $p=3$ divides $m=6$.
\item[(e6)] $m=6$, $|\Omega|=3$, 
 $G/S$ fixes one point not in $\Omega$, its subgroup of order $3$ fixes three points not in $\Omega$, and its subgroup of order $2$ fixes the three points in $\Omega$ and one point not in $\Omega$.
Then, Equation \eqref{RHfixed} yields
$$
2g(\cX)-2\ge -2(6)+(p+p+3p+3p+3+p),
$$
a contradiction.
\item[(e7)] $m=6$, $|\Omega|=2$, 
 $G/S$ fixes one point not in $\Omega$, its subgroup of order $3$ fixes two points in $\Omega$ and one point not in $\Omega$, and its subgroup of order $2$ fixes four points not in $\Omega$.
Then, Equation \eqref{RHfixed} yields
$$
2g(\cX)-2\ge -2(6)+(p+p+p+2+p+2+4p),
$$
a contradiction.
\end{itemize}
\end{proof}

\begin{prop}
    In Case (Ia) of Proposition \ref{1mag}, up to birational equivalence $\cX$ is the curve of affine equation
    $$
 y^p-y=a(x^m-b),
$$
with $a,b\in \mathbb{K}$.
\end{prop}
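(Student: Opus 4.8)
The plan is to realize $\mathcal{X}$ as an Artin--Schreier cover of $\mathbb{P}^1$ and to pin down its defining polynomial using the commuting prime-to-$p$ symmetry. Since $G$ is cyclic of order $pm$ with $(p,m)=1$, write $G=S\times M$. As $M\cap S=1$, the natural map $M\to G/S$ is an isomorphism, so $M$ acts faithfully on $\mathcal{X}/S$, which is rational by case (Ia). A generator $\tau$ of $M$ is then a non-trivial tame automorphism of $\mathbb{P}^1$ fixing at least one point, hence by Proposition \ref{dickson} it has exactly two fixed points; these are fixed by all of $M$, and choosing a coordinate $x$ on $\mathbb{P}^1=\mathcal{X}/S$ that places them at $0$ and $\infty$ we get $\mathbb{K}(\mathcal{X})^S=\mathbb{K}(x)$ and $\tau\colon x\mapsto \zeta x$ for a primitive $m$-th root of unity $\zeta$. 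The degree-$p$ Galois extension $\mathbb{K}(\mathcal{X})/\mathbb{K}(x)$ is Artin--Schreier; by the hypothesis of case (Ia) the group $S$ fixes exactly one point of $\mathcal{X}$, so this cover ramifies over exactly one place of $\mathbb{K}(x)$. That place is $M$-invariant, hence equals $0$ or $\infty$, and after possibly replacing $x$ by $1/x$ (which keeps $\tau$ diagonal) we may assume it is $\infty$. Thus, by \cite[Theorem 12.5]{HKT} and, if necessary, a further Artin--Schreier reduction, $\mathcal{X}\colon y^p-y=B(x)$ where $B$ is a polynomial whose only exponent divisible by $p$ is $0$; then $d:=\deg B$ is prime to $p$ and, by \cite[Lemma 12.1(b)]{HKT}, $g(\mathcal{X})=(p-1)(d-1)/2$. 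Comparing with $g(\mathcal{X})=\frac{1}{2}(p-1)(m-1)$ and using $p\geq 3$ yields $d=m$.

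Next I would feed in the action of $\tau$ on the function field. Because $G$ is abelian, $\tau$ commutes with the Artin--Schreier generator $s\colon y\mapsto y+1$ of $S$, so $s(\tau(y))=\tau(y)+1$; hence $\tau(y)-y$ is fixed by $S$, i.e. $\tau(y)=y+v(x)$ with $v\in\mathbb{K}(x)$. Applying $\tau$ to $y^p-y=B(x)$ and using additivity of the Frobenius gives
$$B(\zeta x)-B(x)=v(x)^p-v(x).$$
The right-hand side is a polynomial (a finite pole of $v$ would produce a pole of $v^p-v$), so $v$ is a polynomial. Writing $B(x)=b_0+\sum_{p\nmid k,\ k\geq 1}b_k x^k$ with $b_m\neq 0$ and $v(x)=\sum_{j\geq 0}c_j x^j$, I would compare coefficients in the displayed identity: for $k$ divisible by $p$ with $k\geq 1$ one finds $c_k=c_{k/p}^{\,p}$ (whence $c_{p^ak'}=c_{k'}^{\,p^a}$ whenever $p\nmid k'$ and $a\geq 0$), while for $p\nmid k$, $k\geq 1$ one finds $b_k(\zeta^k-1)=-c_k$.

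The polynomiality of $v$ now forces the conclusion: if $c_{k'}\neq 0$ for some $k'\geq 1$ with $p\nmid k'$, then $c_{p^ak'}=c_{k'}^{\,p^a}\neq 0$ for every $a\geq 0$, contradicting that $v$ has finite degree. So $c_{k'}=0$ for all such $k'$, giving $b_{k'}(\zeta^{k'}-1)=0$; since $\zeta^{k'}=1$ exactly when $m\mid k'$, this means $b_{k'}=0$ whenever $p\nmid k'$ and $m\nmid k'$. As $\deg B=m$, the only surviving term with $k'\geq 1$ is the one with $k'=m$, so $B(x)=b_0+b_m x^m=b_m(x^m-(-b_0/b_m))$, and setting $a=b_m\in\mathbb{K}^*$ and $b=-b_0/b_m$ we obtain $\mathcal{X}\colon y^p-y=a(x^m-b)$, as claimed.

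The step I expect to demand the most care is the Artin--Schreier normalization: one must arrange simultaneously that the unique branch point sits at $\infty$, that $\deg B$ is prime to $p$, and that $B$ has no intermediate exponents divisible by $p$ --- this last feature is precisely what keeps the coefficient comparison in the final step clean, since it prevents the ``wild'' monomials $x^{pk}$ from absorbing contributions. A lesser point to verify is that $\tau$ really acts diagonally on the chosen model of $\mathcal{X}/S$ and that its unique branch point is one of its two fixed points; both follow from Proposition \ref{dickson} together with the $M$-invariance of the branch locus.
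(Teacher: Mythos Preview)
Your proof is correct and complete. It differs from the paper's argument in one key step: how one shows that $B(\zeta x)=B(x)$.

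The paper argues that $\mathbb{K}(y)$ must be the fixed field of $M$, by observing that $[\mathbb{K}(\mathcal{X}):\mathbb{K}(y)]=m$ (from the pole order of $y$ at the unique ramified point) and that a cyclic group has a unique subgroup of each order; hence $\tau$ fixes $y$ outright, and applying $\tau$ to $y^p-y=B(x)$ gives $B(\nu x)=B(x)$ immediately. Your approach does not identify $\mathbb{K}(y)$ with $\text{Fix}(M)$. Instead you write $\tau(y)=y+v(x)$, obtain $B(\zeta x)-B(x)=v^p-v$, and then use the Artin--Schreier normalization (no positive exponent of $B$ divisible by $p$) together with the recursion $c_{pk}=c_k^{\,p}$ to force $v$ to be constant, recovering $B(\zeta x)=B(x)$ a posteriori.

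The paper's route is shorter and more conceptual once the identification $\mathbb{K}(y)=\text{Fix}(M)$ is in hand, though that identification requires knowing $[\mathbb{K}(\mathcal{X}):\mathbb{K}(y)]=m$, which the paper states without justification. Your route is more computational but entirely self-contained: it trades the Galois-theoretic step for an elementary coefficient argument, at the cost of needing the extra normalization that $B$ has no intermediate $p$-th-power exponents (a standard reduction you correctly flag). Both reach the same conclusion $B(x)=b_0+b_m x^m$ via the invariance of $B$ under $x\mapsto\zeta x$.
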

\begin{proof}
By \cite[Theorem 12.5]{HKT},
an equation for $\mathcal X$ is
$$
y^p-y=B(x),
$$
where $B$ is a polynomial of degree $d$. Since the genus of this curve is $(p-1)(d-1)/2$ we have that $m=d$. 
Also, the fixed field of $S$ is $\mathbb K(x)$. Two orbits under the action of $S$ are fixed by $G/S$, one of size $p$, say $\Omega_1$,  and the other consisting of the single point in $\Omega$. Clearly the only pole of $x$ in $\mathcal X/S$, namely the ideal point of the projective line, corresponds to $\Omega$. Let $P_1$ be the point lying under $\Omega_1$ and let $a\in \mathbb K$ coincide with $x(P_1)$.
Without loss of generality we can assume that $a=0$. Then the divisor of $x$ on $\mathcal X$ 
is preserved by every element in $G$.

Let $\sigma$ be a generator of $G$. Then
$\div(\sigma^*(x))=\div(x)$ and therefore, $\sigma^*(x)=\mu x$ for some $\mu \in \mathbb K$. By $\sigma^{pm}=1$ we deduce that $\mu^m=1$.

Consider  $(\sigma^p)^*(x)$; clearly, $(\sigma^p)^*(x)=\nu x$ holds for some  $m$-th root of unity $\nu$. We claim that $\nu $ is primitive. In fact, if $\nu^h=1$ for a proper divisor of $m$, then $x$ would be fixed by  $\sigma^{ph}$, which is not in $H$ and this would contradict that $\mathbb K(x)$ is the fixed field of $S$.
As $x^m$ is fixed by $G$, and the degree of the extension $\mathbb K(\mathcal X)$ over $K(x^m)$ coincides with $pm=|G|$, we have that $\mathbb K(x^m)$ is the fixed field of $G$. 
Note that $\mathbb K(y)$ and the fixed field of $M$ are both intermediate fields between $\mathbb K(\mathcal X)$ and $\mathbb K(x^m)$ such that the degrees  $\mathbb K(\mathcal X):\mathbb K(y)$ and 
$\mathbb K(\mathcal X):\text{Fix}(M)$ are both equal to $m$. Since $G$ has a unique subgroup of order $m$, we have that $\mathbb K(y)$ is the fixed field of $M$ and hence $M$ fixes $y$. 

Therefore, taking into account that $M=\langle \sigma^p\rangle$, we have that for each $j=1,\ldots, m-1$,
$$
(\sigma^{jp})^*(y^p)-(\sigma^{jp})^*(y)=B((\sigma^{jp})^*(x)),
$$
which implies
$$
y^p-y=B(\nu^j x),
$$
and hence
$$
B(x)=B(\nu^j x)
$$
holds for each $j=1,\ldots, m-1$.

Since $\nu$ is a primitive $m$-th root of unity, we have  $B(x)=a(x^m-z_0)$ for some $z_0\in \mathbb K$.
\end{proof}

\begin{prop}
    In Case (Ib) of Proposition \ref{1mag}, up to birational equivalence $\cX$ is the curve of affine equation
    $$
 by^p+cy=ax+\frac{1}{x}
$$
with $a,b,c\in \mathbb{K}$.
\end{prop}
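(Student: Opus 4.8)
The plan is to reconstruct the function field $\mathbb K(\mathcal X)$ explicitly from the ramification data recorded in Case (Ib) of Proposition \ref{1mag}, using Artin--Schreier theory; I will also indicate at the end a shorter route through Proposition \ref{MZ}. Recall that in Case (Ib) we have $N=2p$, $g(\mathcal X)=p-1$, the quotient $\mathcal X/S$ is rational, and $S$ (the subgroup of $G$ of order $p$) fixes exactly two points of $\mathcal X$.

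First I would use that $\mathcal X/S$ has genus $0$ to write $\mathbb K(\mathcal X)^{S}=\mathbb K(x)$ for a suitable generator $x$. Since $\mathbb K(\mathcal X)/\mathbb K(x)$ is cyclic of degree $p$ and $\mathbb K$ has characteristic $p$, it is an Artin--Schreier extension: $\mathbb K(\mathcal X)=\mathbb K(x)(y)$ with $y^{p}-y=f$ for some $f\in\mathbb K(x)$ that we may take in reduced form, i.e.\ with all negative valuations coprime to $p$; see \cite[Proposition 3.7.8]{Stichtenoth}. In reduced form the ramified places of $\mathbb K(\mathcal X)/\mathbb K(x)$ are exactly the poles of $f$; moreover, as $|S|=p$ is prime, every ramification point of $S$ is totally ramified and is the only point of $\mathcal X$ lying over its image, so the number of poles of $f$ equals the number of fixed points of $S$, namely $2$. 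After choosing $x$ so that these two poles are at $x=0$ and $x=\infty$, $f$ becomes a Laurent polynomial in $x$ with a pole of order $d_{0}\ge 1$ at $x=0$ and of order $d_{\infty}\ge 1$ at $x=\infty$, both coprime to $p$. The genus formula of \cite[Proposition 3.7.8]{Stichtenoth} reads
$$2g(\mathcal X)-2=-2p+(p-1)(d_{0}+1)+(p-1)(d_{\infty}+1),$$
and substituting $g(\mathcal X)=p-1$ forces $d_{0}+d_{\infty}=2$, hence $d_{0}=d_{\infty}=1$. Thus $f=a_{1}x+a_{0}+a_{-1}x^{-1}$ with $a_{1},a_{-1}\in\mathbb K^{*}$ and $a_{0}\in\mathbb K$.

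It then remains to normalize the equation. Replacing $y$ by $y+c_{0}$, where $c_{0}$ is any solution in $\mathbb K$ of $c_{0}^{p}-c_{0}=a_{0}$, kills the constant term, giving $y^{p}-y=a_{1}x+a_{-1}x^{-1}$; multiplying through by $a_{-1}^{-1}$ and putting $b=a_{-1}^{-1}$, $c=-a_{-1}^{-1}$, $a=a_{1}a_{-1}^{-1}$ yields the affine equation $\mathcal X: by^{p}+cy=ax+1/x$ with $a,b,c\in\mathbb K^{*}$, as claimed.

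The step needing the most care is the bookkeeping in the second paragraph: one must argue precisely that ``$S$ fixes exactly two points'' becomes ``$f$ has exactly two poles once put in reduced form'' (here the primality of $|S|$ is essential, since it makes ramification total and the fibres over branch places singletons), and verify that moving the two branch points to $x=0,\infty$ preserves both reducedness and the pole orders; the remaining manipulations are routine. Alternatively, Proposition \ref{MZ} applies directly with $q=p$ and $E_{q}=S$: its Case (b) is excluded because there $\mathcal X$ would have $p$-rank $0$, whereas the $p$-rank of our $\mathcal X$ equals $(p-1)(2-1)=p-1=g(\mathcal X)$ by the Deuring--Shafarevich formula applied to the degree-$p$ cover $\mathcal X\to\mathcal X/S$ (branched precisely at the two fixed points of $S$), so $\mathcal X$ is ordinary; hence Case (a) holds, and since a separable $p$-linearized polynomial of degree $p$ is exactly $bT^{p}+cT$ with $b,c\in\mathbb K^{*}$, this gives the same equation $\mathcal X: by^{p}+cy=ax+1/x$.
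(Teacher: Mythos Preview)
Your proof is correct. Your primary route---reconstructing the Artin--Schreier equation directly from the ramification data---is different from, and more elementary than, the paper's argument. The paper simply invokes Proposition~\ref{MZ} with $q=p$ and $E_q=S$, obtaining the two candidate equations, and then rules out the case $L_2(y)=x^3+bx$ by observing that there the order-$p$ group fixes only one point (the pole of $x$), contradicting $|\Omega|=2$. Your direct approach avoids quoting Proposition~\ref{MZ} altogether and instead pins down the pole structure of the Artin--Schreier generator from the genus formula; this is self-contained and arguably cleaner in the $q=p$ case, though the paper's route has the advantage of leaning on a result already proved for other purposes.

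Your alternative argument via Proposition~\ref{MZ} is also correct and essentially matches the paper's, with one difference: to exclude case~(b) you compute the $p$-rank of $\mathcal X$ via Deuring--Shafarevich (obtaining $\gamma(\mathcal X)=p-1$, hence ordinary), whereas the paper uses the fixed-point count. The paper's exclusion is shorter here, since the hypothesis ``$S$ fixes exactly two points'' is already on the table and immediately contradicts the single fixed point in case~(b); your $p$-rank computation works but is a slightly longer detour to the same conclusion.
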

\begin{proof}
For $m=2$ and $g(\cX)=p- 1$ Proposition \ref{MZ} applies. Then an affine equation for $\mathcal X$ is either
$$
by^p+cy=ax+\frac{1}{x},
$$
or 
$$
by^p+cy=x^3+bx.
$$
In the former case, clearly $\mathcal X$ admits a cyclic automorphism group of size $2p$ consisting of the maps
$$
y\mapsto y+\gamma, \qquad x\mapsto 1/(ax),
$$
and
$$
y\mapsto y+\gamma, \qquad x\mapsto x,
$$
where $\gamma$ runs over the roots of  $bY^p+cY$.
In the latter case, 
the group of order $p$ fixes only one point, contradicting $|\Omega|=2$.
\end{proof}

%\begin{prop}
%Case (Ic) of Proposition \ref{1mag} does not occur.
%\end{prop}

The following statement summarizes the results obtained in this subsection. 
\begin{thm}\label{24marBIS} %Let $p\geq 3$. Let $G$ be a cyclic group of automorphisms acting on a curve $\cX$ of genus $g(\cX)\ge 2$ of order $N$ greater than or equal to $2g(\cX)+1$.  
If $|G|=p^hm$ with $h\ge 1$ and $(p,m)=1$ then  $|G|=pm$ with $m>1$, $p\neq 3$ and
$$
g(\cX)\in\{(p-1)(m-1)/2,p-1\};
$$
also, $\mathcal X$ is an Artin-Schreier extension of the projective line. Specifically, up to birational equivalence, if $g(\cX)=(p-1)(m-1)/2$, then
$$
\mathcal X: y^p-y=a(x^m-b),
$$
with $a,b\in \mathbb K$, whereas if 
$g(\cX)=p-1$ then
$$
\mathcal X: by^p+cy=ax+\frac{1}{x},
$$
with $a,b,c\in \mathbb K$.

\end{thm}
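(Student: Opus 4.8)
The statement is essentially a consolidation of the results just proved in this subsection, so most of the proof is citation and the plan is mainly organisational. First I would invoke Proposition~\ref{1mag}: under the standing hypotheses $|G|=p^hm$ with $h\ge 1$, $m>1$, $(p,m)=1$ and $N=|G|\ge 2g(\cX)+1$, that proposition already forces $h=1$, hence $|G|=pm$ with $m>1$, and it narrows the ramification of $\cX\to\cX/G$ to exactly one of the two configurations (Ia), (Ib) listed there. Reading off the genus from the same proposition gives $g(\cX)=\frac{(p-1)(m-1)}{2}$ in case (Ia) and $g(\cX)=p-1$ (with $m=2$) in case (Ib), which is the asserted dichotomy for $g(\cX)$.

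Next I would quote the two propositions immediately preceding this theorem, which pin down $\cX$ up to birational equivalence in each configuration: $\cX:y^p-y=a(x^m-b)$ with $a,b\in\mathbb K$ in case (Ia), and $\cX:by^p+cy=ax+\frac{1}{x}$ with $a,b,c\in\mathbb K$ in case (Ib). In both models the rational map $(x,y)\mapsto x$ realises $\cX$ as a degree-$p$ Artin--Schreier cover of the projective line, which is the final clause of the statement.

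The only assertion not handed to us directly by the previous propositions is $p\ne 3$, and establishing it is where the real work lies. The plan is to go through the characteristic-$3$ instances of (Ia) and (Ib) and show that each of them is already accounted for by another item of Theorem~\ref{MAIN} (so that it need not be recorded under (II)) or else cannot in fact satisfy the hypotheses. For case (Ib) one would start from Proposition~\ref{MZ}(a), which in characteristic $3$ gives $\cX$ ordinary of genus $2$ with $|\aut(\cX)|=12$, and then analyse the cyclic subgroups of $\aut(\cX)$; for case (Ia) one would pass to the superelliptic model $x^m=c\cdot(\text{separable cubic in }y)$ obtained from $y^3-y=a(x^m-b)$ and examine its automorphism group directly. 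I expect this to be the main obstacle of the whole argument: it is the one step that must work with the full automorphism group of the exhibited curve rather than with the cyclic group $G$ we started from, and it cannot be reduced to a bare citation of Proposition~\ref{1mag} and its two corollaries.
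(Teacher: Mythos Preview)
Your plan to assemble the theorem from Proposition~\ref{1mag} and the two equation-determining propositions is exactly what the paper does: it declares that Theorem~\ref{24marBIS} ``summarizes the results obtained in this subsection'' and offers no further argument. You have also correctly isolated the one assertion that does \emph{not} follow from those citations, namely $p\ne3$.

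Your proposed strategy for $p\ne3$ will not go through in case~(Ib), however, because there the claim appears to be false rather than merely unproved. Take $p=3$: the curve $\cX:\,by^3+cy=ax+1/x$ has genus $2$, and the map $(x,y)\mapsto(1/(ax),\,y+v)$, with $v$ a nonzero root of $bT^3+cT$, is an automorphism of order $6\ge 5=2g(\cX)+1$, so the hypotheses of the theorem are met with $N=6=pm$, $m=2$. By Proposition~\ref{MZ}(a) the full automorphism group has order exactly $12$; since no divisor of $12$ is simultaneously $\ge 5$ and coprime to $3$, there is no prime-to-$3$ cyclic subgroup placing $\cX$ in case~(I) of Theorem~\ref{MAIN}, and $N\ne p$ rules out case~(III). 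Thus this curve is a genuine instance of case~(II) with $p=3$, and neither of your two proposed exits---``already accounted for by another item of Theorem~\ref{MAIN}'' or ``cannot satisfy the hypotheses''---is available. The restriction $p\ne3$ looks like an overclaim in the statement rather than a consequence of the subsection's results.

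Your instinct for case~(Ia) with $p=3$, by contrast, is sound and can be made precise: after the substitution $y\mapsto y+c$ with $c^3-c=ab$ one reduces to $y^3-y=ax^m$, which carries the automorphism $(x,y)\mapsto(\zeta_{2m}x,-y)$ of order $2m\ge 2g(\cX)+1$ with $3\nmid 2m$, so those curves do land in case~(I). It is only the (Ib) branch that resists.
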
 

\section{Proof of Theorem \ref{MAIN} when $p$ does not divide $N$}\label{coprimo}

In  \cite{Sasaki} a classification of curves admitting a large cyclic automorphism group was given for the case $\mathbb K=\mathbb C$. Here we prove that an analogue result holds when the characteristic $p$ does not divide the size of the cyclic group. 
Most of the steps in the proof presented in \cite{Sasaki} hold with little modifications; however, some exceptions arise for implications that depend on topological results (see, for example, theorems from references [2] and [3] in \cite{Sasaki}).

Throughout the Section, $p>2$ and $(p,N)=1$ are assumed. We will show that if $N\ge 2g(\mathcal X)+1$ then   (I) in Theorem \ref{MAIN} holds.

 Let $G=\langle\sigma\rangle$ and $\left\{\lambda_1,\dots,\lambda_n\right\}$ be the set of points of $\mathcal{X}/G$ where the covering $\pi:\mathcal{X}\longrightarrow \mathcal{X}/G$ ramifies. Using the same terminology and notation as in \cite{Sasaki}, the automorphism $\sigma$ is said to be of kind $(g_0;e_1,e_2,\dots,e_n)$ if the genus of $\mathcal{X}/G$ is $g_0$ and the ramification index at $P_i$ is $e_i$, where $P_i$ is any point in $\mathcal{X}$ such that $\pi(P_i)=\lambda_i$.

\begin{prop}\label{mcm}
Let $M$ be the l.c.m. of $\left\{e_1,\dots,e_n\right\}$. Then the following hold:
\begin{itemize}
    \item[i)] $n\geq 2$ and if $g_0=0$ then $n\geq 3$;
    \item[ii)] $M$ divides $N$, and, if $g_0=0$, $M=N$;
    \item[iii)] the $l.c.m.$ of $\left\{e_1,\dots,\hat{e}_i,\dots e_n\right\}$ is $M$ for each $i$, where $\hat{e}_i$ denotes the omission of $e_i$.
    %\item If $2^r|| M$ (that is $2^r| M$ and $2^{r+1}\nmid M$) then the number of $e_i$ such that $2^r||e_i$ is even.
\end{itemize}
\end{prop}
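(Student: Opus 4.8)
The plan is to translate everything into the combinatorics of subgroups of the cyclic group $G$, using Propositions \ref{n=0}, \ref{n=1} and \ref{divisibilità}. First I would record the basic structure. Since $p\nmid N$ the cover $\pi$ is tame, so for a point $P_i$ lying over $\lambda_i$ the stabilizer $G_{P_i}$ is cyclic of order $e_i$; because $G$ is cyclic, $G_{P_i}$ is \emph{the} unique subgroup of $G$ of order $e_i$, and so $e_i\mid N$ and $e_i\ge 2$. For any $S\subseteq\{1,\dots,n\}$ the subgroup $K_S\le G$ generated by $\{G_{P_i}:i\in S\}$ is exactly the unique subgroup of $G$ of order $M_S:=\lcm\{e_i:i\in S\}$. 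Moreover, for every $K\le G$ the extension $\mathbb K(\mathcal X)^K/\mathbb K(\mathcal X/G)$ is cyclic of order $[G:K]$ dividing $N$, hence of degree prime to $p$; as $\mathbb K$ is algebraically closed it is a Kummer extension, so Proposition \ref{n=1} applies to it.

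For (ii) and (iii) I would study the intermediate cover $\mathcal Y_S:=\mathcal X/K_S\to\mathcal X/G$, which is Galois with cyclic group $G/K_S$ of order $N/M_S$. By Proposition \ref{divisibilità} its branch points are precisely the $\lambda_j$ with $e_j\nmid M_S$; in particular no $\lambda_j$ with $j\in S$ is branched. That $M:=M_{\{1,\dots,n\}}$ divides $N$ is clear since each $e_i\mid N$. If $g_0=0$ and $M<N$, take $S=\{1,\dots,n\}$: then $\mathcal Y_S\to\mathcal X/G=\mathbb P^1$ is unramified of degree $N/M\ge 2$, and Riemann--Hurwitz gives $2g(\mathcal Y_S)-2=-2N/M\le -4$, impossible; hence $M=N$, completing (ii). For (iii), fix $i$, let $S=\{1,\dots,n\}\setminus\{i\}$ and $M'=M_S=\lcm(e_1,\dots,\hat{e}_i,\dots,e_n)$. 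Clearly $M'\mid M$; assume $M'<M$. Since $M=\lcm(M',e_i)$, the inequality $M'<M$ forces $e_i\nmid M'$, so $\mathcal Y_S\to\mathcal X/G$ has exactly one branch point, namely $\lambda_i$; and $M'<M\le N$ makes its degree $N/M'>1$, so it is a non-trivial Kummer extension with a single branch point, contradicting Proposition \ref{n=1}. Hence $M'=M$.

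It remains to prove (i). The cover $\pi$ ramifies by Proposition \ref{n=0}, so $n\ge 1$, and $n\ne 1$ by Proposition \ref{n=1} applied to $\mathbb K(\mathcal X)/\mathbb K(\mathcal X/G)$ itself; thus $n\ge 2$. Suppose $g_0=0$ and $n=2$. Writing Riemann--Hurwitz for the tame cover $\pi$ (there are $N/e_i$ points over $\lambda_i$, each contributing $e_i-1$ to the different),
$$2g(\mathcal X)-2=-2N+\sum_{i=1}^{2}\frac{N}{e_i}(e_i-1)=-N\Big(\frac{1}{e_1}+\frac{1}{e_2}\Big)<0,$$
because $e_1,e_2\ge 2$ give $0<\tfrac{1}{e_1}+\tfrac{1}{e_2}\le 1$. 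Then $g(\mathcal X)=0$, contradicting $g(\mathcal X)\ge 2$; hence $n\ge 3$ when $g_0=0$.

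The work is conceptual rather than computational: the main point is to set up correctly the dictionary between the inertia subgroups $G_{P_i}$ and subgroups of the cyclic group $G$, and to apply Proposition \ref{divisibilità} precisely to identify which $\lambda_j$ ramify in each $\mathcal Y_S$. Once this is done, (ii) reduces to the nonexistence of a non-trivial unramified connected cover of $\mathbb P^1$ and (iii) to Proposition \ref{n=1}, so I do not anticipate a genuine obstacle beyond this bookkeeping.
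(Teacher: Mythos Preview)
Your proof is correct and follows essentially the same approach as the paper: both use Proposition~\ref{divisibilità} to identify the branch locus of the intermediate covers $\mathcal X/H\to\mathcal X/G$ for $H$ the subgroup of order $M$ (respectively $M_i$), then invoke Riemann--Hurwitz over $\mathbb P^1$ for (ii) and Proposition~\ref{n=1} for (iii), with (i) handled by Propositions~\ref{n=0}, \ref{n=1} and Riemann--Hurwitz exactly as you do. Your presentation is slightly more systematic in setting up the $K_S$, $M_S$ notation and in making explicit that the intermediate extensions are Kummer (degree prime to $p$ over an algebraically closed field), a point the paper leaves implicit.
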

\begin{proof}\begin{itemize}
\item[i)] By Proposition \ref{n=1} and Proposition \ref{n=0} $n\geq 2$ holds. Assuming $g_0=0$ and $n=2$, the Riemann-Hurwitz formula gives
$$2g(\cX)-2=-2N+\dfrac{N}{e_1}(e_1-1)+\dfrac{N}{e_2}(e_2-1)$$
that is
$$2g(\cX)-2=-\dfrac{N}{e_1}-\dfrac{N}{e_2},$$
 which contradicts $g(\cX)\geq 2$.
\item[ii)] As each $e_i$ divides $N$, the same holds for $M$.
Moreover, by Proposition \ref{divisibilità}, the covering $\cX/\langle\sigma^{N/M}\rangle\longrightarrow \cX/G$ is unramified, and hence the Riemann-Hurwitz formula for the covering $\cX/\langle\sigma^{N/M}\rangle\longrightarrow \cX/G$, together with $g_0=0$, reads
$$2\bar{g}-2=-2\dfrac{N}{M},$$
where $\bar{g}$ is the genus of $\mathcal{X}/\langle\sigma^{N/M}\rangle$.
This implies $\bar{g}=0$ and $N=M$.
\item[iii)] Let $M_i$ be the $l.c.m.$ of $\left\{e_1,\dots,\hat{e}_i,\dots,e_n\right\}.$
If there exists $j\neq i$ such that $e_j=e_i$ then clearly $M_i=M$. So, assume $e_j\neq e_i$ for each $j\neq i$.
Then, by Proposition \ref{divisibilità} with $d=M_i$, the covering $\mathcal{X}/\langle\sigma^{N/M_i}\rangle\longrightarrow X/G$ can be ramified only over $\lambda_i$. As this is not possible by Proposition \ref{n=1}, $\mathcal{X}/\langle\sigma^{N/M_i}\rangle$ over $X/G$ is unramified and hence $e_i$ divides $M_i$, which concludes the proof.
\end{itemize}
\end{proof}

Throughout this section, we denote by $A_N$ the set
$$
A_N = \left\{(r, s) \in \mathbb{Z}^2 : r, s \geq 1 \text{ and } r + s \leq N - 1\right\},
$$
and, according to \cite{Sasaki}, we refer to an element $(r, s) \in A_N$ as a \textit{primitive pair} if $\gcd(r, s, N) = 1$.\\

We denote by $F(r, s)$ the non-singular model of equation $y^N = x^r (1 - x)^s$, with function field $\mathbb{K}(x, y)$ and genus $g(r, s)$. 
Let $\sigma(r, s)$ be the automorphism of $F(r, s)$ defined by
$$
\sigma(r, s): \begin{cases}
    x' = x, \\
    y' = \zeta _N y,
\end{cases}
$$
where $\zeta _N$ is a primitive $N$-th root of unity. Since $p \nmid N$, $F(r, s)$ is a Kummer extension of the projective line $\mathbb P^1(\mathbb K)$. We denote by
$$
\pi(r, s): F(r, s) \longrightarrow \mathbb{P}^1(\mathbb K)
$$
the morphism induced by $\mathbb{K}(x) \subset \mathbb{K}(x, y)$.
As $(r, s)$ is a primitive pair, Kummer theory implies that $\pi(r, s)$ ramifies exactly at the three points $(0 : 1)$, $(1 : 1)$, and $(1 : 0)$.

\begin{thm}\label{uguaglianza}
If $(r, s) \in A_N$ is a primitive pair, then $N\ge 2g(r,s)+1$.
\end{thm}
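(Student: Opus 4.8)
The plan is to determine $g(r,s)$ \emph{exactly} by applying the Riemann--Hurwitz formula \eqref{Hurwitz} to the degree-$N$ Kummer cover $\pi(r,s)\colon F(r,s)\to\mathbb{P}^1(\mathbb K)$, and then to observe that the resulting closed formula for $g(r,s)$ forces $g(r,s)\le (N-1)/2$.

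First I would note that since $p\nmid N$ the group $G=\langle\sigma(r,s)\rangle$ has order coprime to $p$, so $\pi(r,s)$ is tame; consequently, at each ramification point $P$ of $F(r,s)$ one has $d_P=e_P-1$, where $e_P=|G_P|$ is the ramification index (the higher ramification groups $G_P^{(i)}$, $i\ge 1$, being trivial). Next, I would invoke Kummer theory for $\alpha:=x^r(1-x)^s$ (see \cite[Proposition 3.7.3]{Stichtenoth}): at a branch point $c\in\mathbb{P}^1$ the ramification index of $\pi(r,s)$ equals $e_c=N/\gcd(N,\ord_c(\alpha))$, and precisely $N/e_c=\gcd(N,\ord_c(\alpha))$ points of $F(r,s)$ lie above $c$. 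The branch points are exactly $c=0,1,\infty$ (by the remark preceding the statement), and a direct computation of orders gives $\ord_0(\alpha)=r$, $\ord_1(\alpha)=s$, $\ord_\infty(\alpha)=-(r+s)$; hence $N/e_0=\gcd(N,r)$, $N/e_1=\gcd(N,s)$, $N/e_\infty=\gcd(N,r+s)$.

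Plugging this into \eqref{Hurwitz} with $g(\mathcal X/G)=0$ yields
\begin{equation*}
2g(r,s)-2 \;=\; -2N+\sum_{c\in\{0,1,\infty\}}\frac{N}{e_c}\,(e_c-1)\;=\;N-\bigl(\gcd(N,r)+\gcd(N,s)+\gcd(N,r+s)\bigr),
\end{equation*}
equivalently
\begin{equation*}
2g(r,s)+1 \;=\; N+3-\bigl(\gcd(N,r)+\gcd(N,s)+\gcd(N,r+s)\bigr).
\end{equation*}
Since $r,s\ge 1$, the integers $r$, $s$ and $r+s$ are nonzero, so each of the three greatest common divisors is at least $1$; therefore their sum is at least $3$, and $2g(r,s)+1\le N$, which is the claim.

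I expect no serious obstacle here: the only point requiring some care is the bookkeeping of $\ord_c(\alpha)$ and of how many points of $F(r,s)$ sit over each of $c=0,1,\infty$, and once the genus formula above is established the inequality is immediate from $\gcd(\,\cdot\,,\,\cdot\,)\ge 1$. It is also worth recording, since it will presumably be used later, that equality $N=2g(r,s)+1$ holds precisely when $\gcd(N,r)=\gcd(N,s)=\gcd(N,r+s)=1$; in particular the inequality itself does not even require $(r,s)$ to be primitive.
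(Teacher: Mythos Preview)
Your proof is correct and follows essentially the same approach as the paper: both compute $g(r,s)$ via the Riemann--Hurwitz formula for the tame Kummer cover, arriving at $2g(r,s)+1=N+3-\bigl(\gcd(N,r)+\gcd(N,s)+\gcd(N,r+s)\bigr)$, and then observe that each gcd is at least $1$. Your added remark that the primitive-pair hypothesis is not actually needed for the inequality (only for identifying the branch locus as exactly $\{0,1,\infty\}$) is a correct and useful observation that the paper does not make explicit.
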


\begin{proof} 
From Kummer theory and the Riemann-Hurwitz formula, we have
\begin{eqnarray*}
2g(r, s) - 2 &=& -2N + \gcd(N, r) \left(\frac{N}{\gcd(N, r)} - 1\right) + \gcd(N, s) \left(\frac{N}{\gcd(N, s)} - 1\right) + \\
&& + \gcd(N, r+s) \left(\frac{N}{\gcd(N, r+s)} - 1\right),
\end{eqnarray*}
which simplifies to
$$
g(r, s) = \frac{1}{2}\left(N + 2 - \gcd(N, r) - \gcd(N, s) - \gcd(N, r+s)\right).
$$
This clearly implies that
$$
N \geq 2g(r, s) + 1.
$$
%and equality holds if and only if $\gcd(N, r) = \gcd(N, s) = \gcd(N, r+s) = 1$.
\end{proof}
Another family of curves will play an important role in this section. For $\lambda \in \mathbb{K} \setminus \{0, 1\}$, let $H_{\lambda}$ be the hyperelliptic curve of genus $g$ defined by the equation
$
y^2 = (x^{g+1} - 1)(x^{g+1} - \lambda)
$
and let $\tau_{\lambda}$ be an automorphism of $H_{\lambda}$ defined by
$$
\tau_{\lambda}^{*} : (x, y) \longmapsto (\zeta _{g+1} x, -y),
$$
where $\zeta _{g+1}$ is a primitive $(g+1)$-th root of unity.

\begin{thm}\label{isomorfe}
Let $\mathcal{X}$ be a curve of genus $g(\cX)\geq 2$ admitting an automorphism of $\mathcal{X}$ of order $N\geq 2g(\cX)+1$, with $(p,N)=1$ . Then $\mathcal{X}$ is isomorphic to either $F(r,s)$ for some primitive pair $(r,s)\in A_N$ or to $H_{\lambda}$ for some $\lambda\in\mathbb{K}\setminus\left\{0,1\right\}$ with $N=2g(\cX)+2$ and $g(\cX)$ even. 
\end{thm}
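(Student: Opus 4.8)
The plan is to follow the argument of \cite{Sasaki}, the key simplification being that $(p,N)=1$ forces the cover $\pi\colon\cX\to\cX/G$ to be tame: the higher ramification groups $G_P^{(i)}$, $i\ge1$, all vanish, so if $(g_0;e_1,\dots,e_n)$ denotes the kind of $\sigma$ the Hurwitz genus formula reads $2g(\cX)-2=N(2g_0-2)+\sum_{i=1}^{n}\frac{N}{e_i}(e_i-1)$. Rewriting the hypothesis $N\ge2g(\cX)+1$ as $2g(\cX)-2\le N-3$, the right-hand side above is at least $2N$ when $g_0\ge2$ and, using $n\ge2$ from Proposition~\ref{mcm}, at least $\tfrac n2N\ge N$ when $g_0=1$; in both cases this contradicts $2g(\cX)-2\le N-3<N$, so $g_0=0$.

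With $g_0=0$, Proposition~\ref{mcm} gives $n\ge3$, $\lcm(e_1,\dots,e_n)=N$, and $\lcm$ of any $n-1$ of the $e_i$ again equal to $N$; the genus formula becomes $2g(\cX)-2=N(n-2)-N\sum_{i=1}^{n}\tfrac1{e_i}$, so the bounds $N\ge2g(\cX)+1$ and $g(\cX)\ge2$ read $n-3+\tfrac3N\le\sum_{i=1}^{n}\tfrac1{e_i}\le n-2-\tfrac2N$. Since $e_i\ge2$ gives $\sum\tfrac1{e_i}\le\tfrac n2$, the lower bound forces $n\le5$; for $n=5$ one needs $\sum\tfrac1{e_i}>2$, which leaves only tuples having at least three indices equal to $2$, each of which is killed by combining Proposition~\ref{mcm}(iii) (which pins $N$ to a value incompatible with $N\ge5$) with the parity of $2g(\cX)-2$. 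Hence $n\in\{3,4\}$. If $n=3$, a Möbius change of variable puts the branch points at $0,1,\infty$; by Kummer theory $\K(\cX)=\K(x,y)$ with $y^{N}=x^{a}(1-x)^{b}$ after absorbing a unit, where $1\le a,b\le N-1$, $\gcd(a,b,N)=1$, and $N\nmid a+b$ (so $\infty$ is branched). Replacing $y$ by $1/y$ if necessary to arrange $a+b\le N-1$, we obtain that $\cX$ is birationally equivalent to $F(a,b)$ for the primitive pair $(a,b)\in A_N$, i.e. case (I) holds. (This Kummer-theoretic argument is the substitute for the topological input used at the corresponding point of \cite{Sasaki}.)

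Assume now $n=4$, so $1+\tfrac3N\le\sum_{i=1}^{4}\tfrac1{e_i}\le2-\tfrac2N$; in particular $\sum\tfrac1{e_i}>1$. A finite elementary analysis pins down the sorted tuple $e_1\le e_2\le e_3\le e_4$: $e_1\ge4$ is impossible as it gives $\sum\tfrac1{e_i}\le1$; $e_1=3$ gives $\sum\tfrac1{e_i}\le\tfrac43$, hence $N\le9$, and a direct check of those finitely many $N$ (using $\lcm(e_1,\dots,e_4)=N$ and Proposition~\ref{mcm}(iii)) shows none works; so $e_1=2$, whence $2\mid N$. Continuing with $e_2$ in the same fashion (once $e_2=3$ the $\lcm$-conditions force $6\mid N$, which eventually conflicts with the numerical bounds or with the parity of the genus) one is left only with $\{e_1,e_2,e_3,e_4\}=\{2,2,N/2,N/2\}$ and $N/2$ odd. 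Then $2g(\cX)-2=N-4$, so $g(\cX)=N/2-1$ is even and $N=2g(\cX)+2$. The involution $\iota=\sigma^{N/2}\in G$ fixes exactly the $N$ points over the two index-$2$ branch points (it fixes $P$ iff $e_P$ is even), so the Hurwitz formula for $\langle\iota\rangle$ gives $N-4=2(2g'-2)+N$, hence $g'=g(\cX/\langle\iota\rangle)=0$: the curve $\cX$ is hyperelliptic with hyperelliptic involution $\iota$. Finally $\sigma$ descends to a tame automorphism $\bar\sigma$ of $\cX/\langle\iota\rangle\cong\mathbb{P}^1$ of order $N/2$, which by Proposition~\ref{dickson} is conjugate to $x\mapsto\zeta x$ with $\zeta$ a primitive $(N/2)$-th root of unity and with exactly the two fixed points $0,\infty$; these cannot be branch points of the hyperelliptic cover, since otherwise its $2g(\cX)+2=N$ branch points would not split into $\langle\bar\sigma\rangle$-orbits (recall $N/2>2$), so the branch locus is $\{x^{N/2}=\alpha\}\cup\{x^{N/2}=\beta\}$ with $\alpha\ne\beta$ both nonzero. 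Thus $\cX\colon y^{2}=(x^{N/2}-\alpha)(x^{N/2}-\beta)$, and a rescaling of $x$ and $y$ identifies $\cX$ with $H_\lambda$, $\lambda=\beta/\alpha\in\K\setminus\{0,1\}$, with $N=2g(\cX)+2$ and $g(\cX)$ even, as claimed.

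I expect the main obstacle to be the determination of $(e_1,e_2,e_3,e_4)$ in the case $n=4$ (and, to a lesser extent, the elimination of $n=5$): although entirely elementary, it amounts to organizing the finitely many admissible index tuples and checking each one against the $\lcm$-conditions of Proposition~\ref{mcm} together with the two displayed inequalities and the integrality and parity of $g(\cX)$.
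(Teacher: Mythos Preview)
Your overall strategy coincides with the paper's: establish $g_0=0$, use Proposition~\ref{mcm} to control the ramification indices, reduce to $n\in\{3,4\}$ with the $n=4$ case forced to have type $(0;2,2,g(\cX)+1,g(\cX)+1)$, and then identify the two families via Kummer theory (for $n=3$) and the hyperelliptic involution (for $n=4$). Your Kummer-theoretic treatment of $n=3$ and your identification of $H_\lambda$ via the action of $\bar\sigma$ on $\mathbb{P}^1$ are sound; the latter is organized a bit differently from the paper (which tracks the action of $\sigma^*$ on $t$ explicitly), but both work.

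There is, however, a genuine gap in your $n=4$ case analysis. You write that ``$e_1=3$ gives $\sum\tfrac1{e_i}\le\tfrac43$, hence $N\le9$'', and then appeal to a finite check. But the inequality goes the other way: from $1+\tfrac{3}{N}\le\sum\tfrac{1}{e_i}\le\tfrac43$ one gets $N\ge 9$, not $N\le 9$, so there are infinitely many $N$ and no ``direct check'' is available. The same looseness recurs when you dismiss $e_1=2$, $e_2=3$ as ``eventually conflict[ing] with the numerical bounds or with the parity of the genus'': this subcase does not collapse by a single inequality and needs real work. The paper avoids this trap by splitting on the parity of $N$: for $N$ odd every $e_i$ is odd (hence $\ge 3$), which sharply limits the tuples and lets one kill $e_1=3$ by successively bounding $e_2,e_3$ and then invoking Proposition~\ref{mcm}(iii) to pin down $N$ in the two residual cases $(3,3,5,\cdot)$ and $(3,3,3,\cdot)$; for $N$ even one argues directly that $e_1=e_2=2$, and then Proposition~\ref{mcm}(iii) forces $e_4\in\{e_3,2e_3\}$, after which the Hurwitz formula and the parity of $2g(\cX)$ eliminate all but $(2,2,e_3,e_3)$ with $e_3=g(\cX)+1$ odd. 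Your sketch for $n=5$ is also imprecise (most subcases die from the inequality $\sum\tfrac1{e_i}\ge 2+\tfrac3N$, not from ``$N<5$'' or from parity), though the conclusion there is salvageable.

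In short: your plan is the paper's plan, but the elimination of $(e_1,\ldots,e_4)$ is where the proof actually lives, and as written that part contains an inverted inequality that breaks the argument. Reorganizing along the odd/even split of $N$, as the paper does, gives a clean route through.
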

\begin{proof}
Let $\sigma$ be an automorphism of $\cX$ of order $N$. Let  $(g_0;e_1,e_2,\dots,e_n)$ be its type. We can assume that $e_1\leq e_2\leq\cdots\leq e_n$ holds. By the Riemann-Hurwitz formula applied to $G=\langle \sigma\rangle$ we have
\begin{equation}\label{RH}
\dfrac{2g(\cX)-2}{N}=2g_0-2+\sum_{i=1}^{n}\left(1-\dfrac{1}{e_i}\right).
\end{equation}
We first prove the following three claims.

\begin{enumerate}
    \item { ${\mathbf{g_0=0}}$}. As $N\geq 2g(\cX)+1$,  the left side of \eqref{RH} is smaller than $1$. Moreover, by Propositions \ref{n=0} and  \ref{n=1}, $n\geq 2$. This, together with $g_0\geq 1$, would imply that the right side of \eqref{RH}) is greater then or equal to $1$, a contradiction. So, $g_0=0$ holds.
    \item {\bf If $N$ is odd, then ${\mathbf{n=3}}$}. Since $g_0=0$, we have $n\geq 3$ by Proposition \ref{mcm}. Observe that as $N$ is odd, each $e_{i}$ is odd. This together with \eqref{RH} yields $n<5$. We will now rule out the case $n=4$. The possibilities for $n=4$ are:
    \begin{itemize}
        \item $n=4$ and $e_1\geq 5$. In this case the right-hand side of \eqref{RH} is greater than $1$, a contradiction.
        \item $n=4$, $e_1=3$ and $e_2\geq 5$. As above, the right-hand side of \eqref{RH} is greater than $1$.
        \item $n=4$, $e_1=3$, $e_2=3$ and $e_3\geq 7$. Again, the right-hand side of \eqref{RH} is greater than $1$.
        \item $n=4$, $e_1=e_2=3$ and $e_3=5$. By Proposition \ref{mcm}, $15=\lcm(e_1,e_2,e_3)=\lcm(e_1,e_2,e_3,e_4)$, so either $e_4=5$ or $e_4=15$. Moreover, as $g_0=0$, again by Proposition \ref{mcm} we have $N=\lcm(e_1,e_2,e_3,e_4)=15$. By  \eqref{RH} we obtain that either $g(\cX)=8$ or $g(\cX)=9$, but this contradicts  $N\geq 2g(\cX)+1$.
        \item $n=4$, $e_1=e_2=e_3=3$. By Proposition \ref{mcm} we have $3=\lcm(e_1,e_2,e_3)=\lcm(e_1,e_2,e_3,e_4)$, from which $e_4=3$; again by Proposition \ref{mcm} $N=\lcm(e_1,e_2,e_3,e_4)$ holds. By replacing $e_1=e_2=e_3=e_4=N=3$ and $g_0=0$ in (\ref{RH}) we have $g(\cX)=2$. Whence $N=2g(\cX)-1$, against $N\geq 2g(\cX)+1$.
    \end{itemize}
%    Therefore, $n=3$ holds.
    \item {\bf If $N$ is even, then either ${\mathbf{n=3}}$ or ${\mathbf{n=4}}$ and $\sigma$ is of type $(0;2,2,g(\cX)+1,g(\cX)+1)$ with $g(\cX)$ even}. Arguing as above we want to prove that either $n=3$ or $n=4$, $e_1=e_2=2$, $e_3=e_4=g(\cX)+1$ and $g(\cX)$ is even.\\
    With the same arguments of the previous item, 
   
        the unique possibility for $n>3$ is $n=4$, $e_1=e_2=2$, and $e_3\geq 3$. Since from Proposition \ref{mcm} we have $\lcm(e_1,e_2,e_3,e_4)=\lcm(e_1,e_2,e_3)$, then either $e_4=e_3$ or $e_4=2e_3$ holds.
    In the latter case, by Proposition \ref{mcm} $N=\lcm(2,e_3,2e_3)=2e_3$, and from \eqref{RH} we have
    $$\dfrac{2g(\cX)-2}{2e_3}=2-\dfrac{1}{2}-\dfrac{1}{2}-\dfrac{1}{e_3}-\dfrac{1}{2e_3};$$
     hence 
    $$2g(\cX)=2e_3-1=N-1,$$
    a contradiction as $N$ is even.
    
    If, instead, $e_4=e_3$, then \eqref{RH} reads 
    \begin{equation}\label{RH1}
    \dfrac{2g(\cX)-2}{N}=2-\dfrac{1}{2}-\dfrac{1}{2}-\dfrac{1}{e_3}-\dfrac{1}{e_3}.
    \end{equation}
    For $N$ there are two possibilities: if $e_3$ is even, then $N=\lcm(2,e_3)=e_3$, whereas if $e_3$ is odd, then $N=\lcm(2,e_3)=2e_3$.\\
    Assume first that $e_3$ is even; then \eqref{RH1} reads
    $$\dfrac{2g(\cX)-2}{e_3}=2-\dfrac{1}{2}-\dfrac{1}{2}-\dfrac{1}{e_3}-\dfrac{1}{e_3},$$
    which implies $g(\cX)=\dfrac{e_3}{2}=\dfrac{N}{2}$ against $N\geq 2g(\cX)+1$.\\
    Finally, if $e_3$ is odd, then from \eqref{RH1} we have
    $$\dfrac{2g(\cX)-2}{2e_3}=2-\dfrac{1}{2}-\dfrac{1}{2}-\dfrac{1}{e_3}-\dfrac{1}{e_3},$$
    which yields $e_3=g(\cX)+1$ and hence $N= 2g(\cX)+2$. In particular, $g(\cX)$ is even in this case.\\
\end{enumerate}
%Summarizing, we proved that $g_0=0$ and either $n=3$ or $\sigma$ is of type $(0;2,2,g(\cX)+1,g(\cX)+1)$ and $g(\cX)$ is even.\\
We will continue the proof by showing that if $n=3$ then $\mathcal{X}$ is birationally equivalent to $F(r,s)$ for some primitive pair $(r,s)\in A_N$, whereas if $\sigma$ is of type $(0;2,2,g(\cX)+1,g(\cX)+1)$ and $g(\cX)$ is even then $\mathcal{X}$ is isomorphic to $H_{\lambda}$ for some $\lambda\in\mathbb{K}\setminus\left\{0,1\right\}$ with $N=2g(\cX)+2$.

Recall that $\mathcal{X}\longrightarrow \mathcal{X}/G$ is a cyclic covering of degree $N$ with $n$ ramification points and that $\mathcal X/G$ is the projective line $\mathbb P^1(\mathbb K)$. Let $x$ denote a generator of the function field of $\mathcal X/G$. For an element $\beta$ in $\mathbb K$ let $S_\beta\in \mathbb P^1(\mathbb K)$ denote the point such that $x(S_\beta)=\beta$; also, let $S_\infty$ denote  the only pole of $x$.

Assume first that $n=3$ holds. 
Without loss of generality assume that the three ramification points 
of $\mathcal{X}\longrightarrow \mathcal{X}/G$
are $S_0$, $S_1$ $S_\infty$.
Since $\mathcal{X}$ is a Kummer extension of degree $N$ of $\mathbb{K}(x)$, by (\cite{Stichtenoth}, Prop. 3.7.3 (b)) an equation for $\mathcal X$ is 
$y^N=\alpha(x)$, with 
$$\gcd(ord_{S_0}(\alpha),N)<N,\,\,\,\,\,\gcd(ord_{S_1}(\alpha),N)<N,\,\,\,\,\,\gcd(ord_{S_\infty}(\alpha),N)<N$$
and  $\gcd(ord_P(\alpha),N)=N$ for any other point $P\in \mathbb P^1(\mathbb K)$, $P\notin \{S_0,S_1,S_\infty\}$. 
Now,
\begin{eqnarray}\label{valutazioni}
0=\sum_{\substack{P\in\mathbb{P}^1(\mathbb K) \\
     ord_P(\alpha)\neq 0}}
ord_P(\alpha)=ord_{S_0}(\alpha)+ord_{S_1}(\alpha)+ord_{S_\infty}(\alpha)+\sum_{\substack{P\in\mathbb{P}^1(\mathbb K),P\neq S_0,S_1,S_\infty\\
      ord_P(\alpha)\neq 0\\
      }}ord_P(\alpha).
\end{eqnarray}
Let $R_1,\dots,R_k$ be the points of $\mathbb P^1(\mathbb K)$ where the order of $\alpha$ is different from $0$ and it is a multiple of $N$, and put $\beta_i=x(R_i)$.
Therefore,
$$ord_{R_1}(\alpha)=Nh_1,\dots, ord_{R_k}(\alpha)=Nh_k$$
for suitable integers $h_1,\dots,h_k.$
Then \eqref{valutazioni} becomes
$$0=ord_{S_0}(\alpha)+ord_{S_1}(\alpha)+ord_{S_\infty}(\alpha)+\sum_{i=0}^{k}ord_{R_i}(\alpha).$$
Since $ord_{R_1}(\alpha)=Nh_1$, $\alpha$ will be of the form $\alpha(x)=(x-\beta_1)^{Nh_1}\alpha_1(x)$ with $ord_{R_1}(\alpha_1)=0$.\\
Consider
\begin{equation}\label{sistema}
\begin{cases}
y^{\prime}=\dfrac{y}{(x-\beta_1)^{h_1}}\\
x^{\prime}=x
\end{cases},
\end{equation}
then
$$(y^{\prime})^N=\dfrac{y^N}{(x-\beta_1)^{Nh_1}}=\alpha_1(x^{\prime}).$$
Therefore, \eqref{sistema} defines  an isomorphism between the curves $y^N=\alpha(x)$ and $y^N=\alpha_1(x)$.\\
Moreover, for each $P\in \mathbb{P}^1(\mathbb K)$ we have
$$ord_P(\alpha)=Nh_1\cdot ord_P(x-\beta_1)+ord_P(\alpha_1),$$
whence $ord_P(\alpha)=ord_P(\alpha_1)$ for any $P$ different from $R_1$ and $S_\infty$. So, up to replacing $\alpha(x)$ with $\alpha_1(x)$, we may assume $ord_{R_1}(\alpha)=0$.
By repeating this process for any $i=1,\ldots,k$, we obtain that, up to birational equivalence, a defining equation for $\mathcal{X}$ is $y^N=\alpha(x)$ with $ord_P(\alpha)\neq 0$ just in $\{S_0,S_1,S_\infty\}$. 
%Moreover, in such points, we have that $\gcd(ord_P(\alpha),N)<N$ holds.
Also, either $\alpha$ has two zeros and one pole, or it has two poles and one zero.
Actually, if $\alpha$ has two poles and one zero we can replace $y$ with $y'=\dfrac{1}{y}$ and $\mathcal{X}$ with the curve defined by $(y^\prime)^N=\dfrac{1}{\alpha}$.
So, we can assume that $\alpha$ has $S_0$ and $S_1$ as zeros, and $S_\infty$ as the unique pole, whence $\alpha(x)=f(x)$ with $f(x)\in\mathbb{K}[x]$.
Now, if $r:=ord_{S_0}(f(x))$ and $s:=ord_{S_1}(f(x))$, then $f(x)=x^r(x-1)^s$ can be assumed. This proves our claim for $n=3$, as $\gcd(r,s,N)=1$ follows from the irreducibility of the equation.

Suppose now that $n=4$, $\sigma$ is of type $(0;2,2,g(\cX)+1,g(\cX)+1)$ and $g(\cX)$ even. The Riemann-Hurwitz genus formula reads
$$2g(\cX)-2=-2N+\dfrac{N}{2}+\dfrac{N}{2}+\dfrac{N}{g(\cX)+1}g(\cX)+\dfrac{N}{g(\cX)+1}g(\cX),$$
which implies
$$N=2g(\cX)+2.$$
We can assume that $\pi:\mathcal{X}\longrightarrow \mathcal{X}/G$ ramifies at $S_\theta,S_0,S_1,S_\infty$ with $\theta\in\mathbb{K}\setminus\left\{0,1\right\}$ and that
$$\pi^{-1}(S_\theta)=\left\{P,\sigma(P),\dots,\sigma^{g(\cX)}(P)\right\},\,\,\,\,\,\pi^{-1}(S_1)=\left\{Q,\sigma(Q),\dots,\sigma^{g(\cX)}(Q)\right\},$$
$$\pi^{-1}(S_0)=\left\{P_0,\sigma(P_0)\right\},\,\,\,\,\,\pi^{-1}(S_\infty)=\left\{P_{\infty},\sigma(P_{\infty})\right\}.$$
Now, if we let $\tau:=\sigma^{g(\cX)+1}$, then $\tau$ fixes exactly the set of points 
$$\left\{P,\sigma(P),\dots,\sigma^{g(\cX)}(P),Q,\sigma(Q),\dots,\sigma^{g(\cX)}(Q)\right\}$$
and the Riemann-Hurwitz formula for the covering $\mathcal{X}\longrightarrow \mathcal{X}/\langle\tau\rangle$ reads
$$2g(\cX)-2=2(2g'-2)+(2g(\cX)+2),$$
where $g'$ is the genus of $\mathcal{X}/\langle\tau\rangle$. Hence $g'=0$ holds and $\mathbb{K}(\mathcal{X}/\langle\tau\rangle)=\mathbb{K}(t)$ for some  $t\in \mathbb K(\mathcal X)$ and $\mathcal{X}$ is hyperelliptic. Also,
$$\div(t)=P_0+\sigma(P_0)-P_{\infty}-\sigma(P_{\infty}).$$

By \cite[Proposition 6.2.3 (c)]{Stichtenoth}, $\mathcal{X}$ has equation $y^2=f(t)$, for a suitable $f\in\mathbb{K}[t]$ whose zeros are exactly the $2g(\cX)+2$ places of $\mathbb{K}(t)$ that are ramified in the extension $\mathbb{K}(t,y)\mid\mathbb{K}(t)$. So, for 
$a_i=t(\sigma^i(P))$ and $b_i=t(\sigma^i(Q))$ we have that
 $\left\{a_0,\dots,a_{g(\cX)},b_0,\dots,b_{g(\cX)}\right\}$ are the zeros of $f$ and $\mathcal{X}$ has equation
\begin{equation}\label{eqcurva}
y^2=\prod_{i=0}^{g(\cX)}(t-a_i)(t-b_i).
\end{equation}
Note that since
$$\div(\sigma^{*}(t))=\div(t) \,\,\text{ and }\,\, (\sigma^{g(\cX)+1})^{*}(t)=t$$
we have that $\sigma^{*}(t)=\zeta t$ where $\zeta $ is a primitive $(g(\cX)+1)$-th root of unity.
Also, $\div(\sigma^{*}(t-a_i))=\sigma(\div(t-a_i))=\div(t-a_{(i+1 \pmod {g(\mathcal X)}})$ and $\div(\sigma^{*}(t-b_i))=\sigma(\div(t-b_i))=\div(t-b_{(i+1 \pmod {g(\mathcal X)}})$. Then, $\sigma^{*}(t-a_i)=\xi(t-a_{(i+1 \pmod {g(\mathcal X)}})$ and $\sigma^{*}(t-b_i)=\eta(t-b_{(i+1 \pmod {g(\mathcal X)}})$ for some $\xi, \eta\in\mathbb{K}.$
On the other hand, 
%\textcolor{red}{perchè $\sigma^{*}(a_i)=a_i$?}
$$\sigma^{*}(t-a_i)=\zeta t-a_i=\zeta \left(t-\dfrac{a_i}{\zeta }\right)
$$
and 
$$
\sigma^{*}(t-b_i)=\zeta t-b_i=\zeta \left(t-\dfrac{b_i}{\zeta }\right).$$
Thus
$$\xi(t-a_{(i+1 \pmod {g(\cX)}})=\zeta \left(t-\dfrac{a_i}{\zeta }\right)
$$
and
$$
\eta(t-b_{(i+1 \pmod {g(\cX)}})=\zeta \left(t-\dfrac{b_i}{\zeta }\right),$$
whence $\xi=\eta=\zeta $,  $a_{(i+1 \pmod {g(\cX)}}=\dfrac{a_i}{\zeta }$ and $b_{(i+1 \pmod {g(\cX)}}=\dfrac{b_i}{\zeta }$.\\
So, we obtain $a_i=\dfrac{a_0}{\zeta ^{i}}$ and $b_i=\dfrac{b_0}{\zeta ^{i}}$, and hence \eqref{eqcurva} reads
$$y^2=\prod_{i=0}^{g(\cX)}\left(t-\dfrac{a_0}{\zeta ^{i}}\right)\left(t-\dfrac{b_0}{\zeta ^{i}}\right).$$
Finally, without loss of generality we can assume $a_0=1$ and $b_0=\lambda$, so that \eqref{eqcurva} becomes 
$$y^2=(t^{g(\cX)+1}-1)(t^{g(\cX)+1}-\lambda),$$
which proves the claim.
\end{proof}

\section*{Acknowledgements}
The authors thank the Italian National Group for Algebraic and Geometric Structures and their Applications (GNSAGA—INdAM)
which supported the research. The third author is funded by the project ``Metodi matematici per la firma digitale ed il cloud computing" (Programma Operativo Nazionale (PON) “Ricerca e Innovazione” 2014-2020, University of Perugia).

\section*{Declarations}
{\bf Conflicts of interest.} The authors have no conflicts of interest to declare that are relevant to the content of this
article.

\end{document}